\def\H{\widetilde{H}}
\def\MM{\mathcal M}
\def\R{{\mathbb R}}
\def\K{{\mathbb K}}
\def\Z{\mathbb {Z}}
\def\N{{\mathbb N}}
\def\KK{{\mathcal K}}
\def\NN{{\mathcal N}}
\def\OO{{\mathcal O}}
\def\TT{{\mathcal T}}
\def\XX{{\mathcal X}}
\def\VV{{\mathfrak V}}
\def\WW{\mathfrak{W}}
\def\WWe{\mathcal{W}}
\def\YY{{\mathcal Y}}
\def\ZZ{\mathcal{Z}}
\def\diam{{\rm diam}}
\def\edual#1#2{\langle\hspace*{-1mm}\langle#1\,,\,#2\rangle\hspace*{-1mm}\rangle_{\WW}}
\def\edualV#1#2{\langle\hspace*{-1mm}\langle#1\,,\,#2\rangle\hspace*{-1mm}\rangle_{\VV}}
\def\norm#1#2{\|#1\|_{#2}}
\def\enorm#1{|\hspace*{-.5mm}|#1|\hspace*{-.5mm}|_{\WW}}
\def\enormV#1{|\hspace*{-.5mm}|#1|\hspace*{-.5mm}|_{\VV}}
\def\set#1#2{\big\{#1\,:\,#2\big\}}
\def\dual#1#2{\langle#1\,,\,#2\rangle}
\def\diag{{\rm diag}}
\def\gal{{\mathcal{G}}}
\def\level{{\rm level}}
\def\QQ{{\mathcal{Q}}}
\def\gen{\mathrm{gen}}
\def\hyp{\mathfrak{W}} 
\def\prec{\mathcal{P}} 
\def\PAS{\widetilde{\mathcal{P}}_{L}^{\WW}}
\def\PASV{\widetilde{\mathcal{P}}_{L}^{\VV}}
\def\PPAS{\widetilde{\mathbf{P}}_{L}^{\WW}}
\def\PPASV{\widetilde{\mathbf{P}}_L^{\VV}}
\def\SSAS{\widetilde{\mathbf{S}}_{L}^{\WW}}
\def\SSASV{\widetilde{\mathbf{S}}_L^{\VV}}
\def\AA{\mathbf{A}}
\def\BB{\mathbf{B}}
\def\evmin{\lambda_{\rm min}^\WW}
\def\evmax{\lambda_{\rm max}^\WW}
\def\evminV{\lambda_{\rm min}^\VV}
\def\evmaxV{\lambda_{\rm max}^\VV}
\def\IW{{\rm id}_{\ell\to L}^\WW}
\def\IV{{\rm id}_{\ell\to L}^\VV}
\def\IIW{\mathbf{{ id}}_{\ell\to L}^\WW}
\def\IIV{\mathbf{{ id}}_{\ell\to L}^\VV}
\def\HW{{{\rm id}}_{\widetilde\ell\to \ell}^\WW}
\def\HHW{{\mathbf{{ id}}}_{\widetilde\ell\to \ell}^\WW}
\def\HHV{{\mathbf{{ id}}}_{\widetilde\ell\to \ell}^\VV}
\def\cond{{\rm cond}}
\def\refine{{\tt ref}}
\def\linhull{{\rm span}} 
\def\supp{{\rm supp}}
\def\Cnorm{C_{\rm norm}}
\def\Cscott{C_{\rm sz}}
\def\Cinv{C_{\rm inv}}
\def\Cscale{C_{\rm scale}}
\newcounter{constantsnumber}
\def\namec#1#2{%
  \ifthenelse{\equal{#1}{lipschitz}}{C_{\rm lip}}{%
  \ifthenelse{\equal{#1}{c:unifEquivLevel}}{C_{\rm level}}{%
  \ifthenelse{\equal{#1}{mark}}{C_{\rm mark}}{%
  \ifthenelse{\equal{#1}{basis}}{C_{\rm basis}}{%
  \ifthenelse{\equal{#1}{monotone}}{C_{\rm mon}}{%
  \ifthenelse{\equal{#1}{cea}}{C_{\mbox{\rm\scriptsize C\'ea}}}{%
  \ifthenelse{\equal{#1}{norm}}{C_{\rm norm}}{%
  \ifthenelse{\equal{#1}{mon}}{{C}_{\rm mon}}{
  \ifthenelse{\equal{#1}{lip}}{{C}_{\rm lip}}{
  \ifthenelse{\equal{#1}{monA}}{c_{\rm mon}}{
  \ifthenelse{\equal{#1}{lipA}}{c_{\rm lip}}{
  \ifthenelse{\equal{#1}{normequiv1}}{c_{\rm norm}}{ 
  \ifthenelse{\equal{#1}{inv}}{C_{\rm inv}}{ 
  \ifthenelse{\equal{#1}{inv2}}{\widetilde{C}_{\rm inv}}{ 
  \ifthenelse{\equal{#2}{newcounter}}{\refstepcounter{constantsnumber}\label{const#1}}{}C_{\ref{const#1}}}%
  }}}}}}}}}}}}}}
\def\setc#1{\namec{#1}{newcounter}}
\numberwithin{equation}{section}
\numberwithin{figure}{section}
\newtheorem{theorem}{Theorem}[section]
\newtheorem{proposition}[theorem]{Proposition}
\newtheorem{remark}[theorem]{Remark}
\newtheorem{lemma}[theorem]{Lemma}
\newtheorem{corollary}[theorem]{Corollary}
\newtheorem{algorithm}[theorem]{Algorithm}
\def\subsection#1
\bf\arabic{section}.\arabic{subsection}.~#1.~}
\def\refine{{\tt refine}}
\def\MM{\mathcal M}
\def\II{\widetilde{\mathcal I}}
\begin{document}

\title[Optimal additive Schwarz preconditioning for adaptive 2D IGABEM]{Optimal additive Schwarz preconditioning\\for adaptive 2D IGA boundary element methods}
\date{\today}

\author{Thomas F\"uhrer}
\address[TF]{Facultad de Matem\'aticas, Pontificia Universidad Católica de Chile, Vicku\~na Mackenna 4860, Santiago, Chile}
\email{tofuhrer@mat.uc.cl}
\author{Gregor Gantner}
\email{gregor.gantner@asc.tuwien.ac.at}
\author{Dirk Praetorius}
\email{dirk.praetorius@asc.tuwien.ac.at}
\author{Stefan Schimanko}
\email{stefan.schimanko@asc.tuwien.ac.at}
\address[GG, DP, SS]{Institute for Analysis and Scientific Computing, TU Wien, Wiedner Hauptstr. 8-10, 1040 Wien, Austria}

\thanks{{\bf Acknowledgement.} 
TF was supported by CONICYT through the FONDECYT project \textit{Least-squares methods for obstacle problems} (grant P11170050).
The authors GG, DP, and SS were supported by the Austrian Science Fund (FWF) through the research projects \textit{Optimal isogeometric boundary element method} (grant P29096) and \textit{Optimal adaptivity for BEM and FEM-BEM coupling} (grant P27005), the doctoral school \textit{Dissipation and dispersion in nonlinear PDEs} (grant W1245), and the special research program \textit{Taming complexity in PDE systems} (grant SFB F65).
}

\keywords{preconditioner, multilevel additive Schwarz, isogeometric analysis, boundary element methods}
\subjclass[2010]{65N30, 65F08, 65N38}
\begin{abstract}
 We define and analyze (local) multilevel diagonal preconditioners for isogeometric boundary elements on locally refined meshes in two dimensions. 
 Hypersingular and weakly-singular integral equations are considered. 
 We prove that the condition number of the preconditioned systems of linear equations is independent of the mesh-size and the refinement level. 
 Therefore, the computational complexity, when using appropriate iterative solvers, is optimal. 
 Our analysis is carried out for closed and open boundaries and numerical examples confirm our theoretical results.
\end{abstract}
\maketitle

\section{Introduction}
%

In the last decade, the isogeometric analysis (IGA) had a strong impact on the field of scientific computing and 
numerical analysis.
 We refer, e.g., to the pioneering work~\cite{pioneer} and to \cite{bible,overview} for an introduction to the field.
The basic idea is to utilize the same ansatz functions for approximations as are used for the description of the geometry by some computer aided design (CAD) program.
Here, we consider the case, where the geometry is represented by rational splines.
For certain problems, where the fundamental solution is known, the boundary element method (BEM) is attractive since CAD programs usually only provide a parametrization of the boundary $\partial\Omega$ and not of the volume $\Omega$ itself. 
Isogeometric BEM (IGABEM) has first been considered for 2D BEM in~\cite{igabem2d} and for 3D BEM in \cite{igabem3d}.
We refer {to~\cite{SBTR,helmholtziga,simpson,tran}} for numerical experiments, to \cite{cad2wave,TM,zechner,dhp16,wolf18,wolf_new} for fast IGABEM based on wavelets, fast multipole, $\mathcal{H}$-matrices resp.\ $\mathcal{H}^2$-matrices, and 
to~\cite{stokesiga,keuchel,sampoli,falini2018study} for some quadrature analysis.

Recently, adaptive IGABEM  has been analyzed in \cite{igafaermann,resigabem,giannelli_new} for rational  resp.\ hierarchical splines in 2D  and 
optimal algebraic convergence rates have been proven in~\cite{optigabem,hypiga}  for rational splines in 2D resp.\ in \cite{diss} for hierarchical splines in 3D.
In~2D, the corresponding adaptive algorithms allow for both $h$-refinement as well as regularity reduction via knot multiplicity increase. 
Usually, it is assumed that the resulting systems of linear equations are solved exactly. 
In practice, however, iterative solvers are used and their effectivity hinges on the condition number of the Galerkin matrices.
It is well-known that the condition number of Galerkin matrices corresponding to the discretization of certain
integral operators depend not only on the number of degrees of freedom but also on the ratio
$h_\mathrm{max}/h_\mathrm{min}$ of the largest and smallest element diameter, which can become arbitrarily large on
locally refined meshes; see, e.g.,~\cite{amt99} for the case of affine boundary elements and lowest-order ansatz functions.
Therefore, the construction of optimal preconditioners is a necessity. We say that  a preconditioner is optimal, if the condition
number of the resulting preconditioned matrices is independent of the mesh-size function $h$, the number of degrees of
freedom and the refinement level.

In this work, we consider simple additive Schwarz methods. 
The central idea of our local multilevel diagonal preconditioners is to use newly created nodes 
and old nodes whose  multiplicity has changed, to define local diagonal scalings on each refinement level.
This allows us to prove optimality of the proposed preconditioner and the computational complexity for applying our
preconditioner is linear with respect to the number of degrees of freedom on the finest mesh.
In particular, this extends our prior works~\cite{ffpsfembem,ffps,optpcg} on local multilevel diagonal preconditioners for hypersingular
integral equations  and weakly-singular integral equations for affine geometries  in 2D and 3D and lowest-order
discretatizations.
Other results on Schwarz methods for BEM with affine boundaries are found in~\cite{cao,transtep96,tsm97}, mainly for
uniform mesh-refinements and in ~\cite{amcl03} for some specially local refined meshes.
For the higher order case, we refer, e.g., to \cite{heuer,fmpr}.
Diagonal preconditioners for BEM are covered in~\cite{amt99,gm06}.
Another preconditioner technique that leads to uniformly bounded condition numbers 
is based on the use of integral operators of opposite order. The case of closed
boundaries is analyzed in~\cite{sw98}, whereas open boundaries are treated in the recent
works~\cite{hjhut2014,HJU16_646,HJU17_709}.
 The recent work \cite{stevenson18} deals with the opposite order operator preconditioning technique in Sobolev spaces of negative order.

To the best of our knowledge, the preconditioning of IGABEM, even on uniform meshes, is still an open problem.
For isogeometric finite elements (IGAFEM), a BPX-type preconditioner  is analyzed in~\cite{bhksBPX} on uniform meshes, where the authors
consider general pseudodifferential operators of positive order.
Recently, a BPX-type preconditioner with local smoothing has been analyzed in~\cite{BPXadapIGA} for locally refined
T-meshes.
Other multilevel preconditioners for IGAFEM have been studied in~\cite{MR3002801,sangalli16,MR3610090,takacs17}  for uniform resp. in \cite{MR3522271} for hierarchical meshes, and domain
decomposition methods can be found in~\cite{MR3037302,MR3216651,MR3612901} for uniform meshes.

\noindent\textbf{Model problem.}
Let $\Omega$ be a  bounded simply connected Lipschitz domain in 
$\R^2$, with piecewise smooth boundary $\partial\Omega$ and let 
$\Gamma\subseteq\partial\Omega$ be a  connected subset with Lipschitz boundary $\partial\Gamma$. 
Neumann screen problems on $\Gamma$ yield the weakly-singular integral equation
\begin{align}
\label{eq:hypsing}
\WW u(x) := -\frac{\partial}{\partial\nu_x}\int_\Gamma
\Big(\frac{\partial}{\partial\nu_y}\,G(x,y)\Big)\,u(y)\,dy
= f(x)
\quad\text{for all }x\in\Gamma
\end{align}
with the hypersingular integral operator $\WW$ and some given right-hand side $f$. 
Here, $\nu_x$ denotes the outer normal unit vector of $\Omega$ at some point 
$x\in\Gamma$, and 
\begin{align}
 G(x,y) :=
 -\frac{1}{2\pi}\,\log|x-y|
\end{align}
is the fundamental solution of the Laplacian. 
Similarly, Dirichlet screen problems lead to the weakly-singular integral equation
\begin{align}
\label{eq:weaksing}
\VV \phi(x) := \int_\Gamma
\,G(x,y)\,\phi(y)\,dy
= g(x)
\end{align}
with the weakly-singular integral operator $\VV$ and some given right-hand side $g$. 

\noindent\textbf{Outline.}
The remainder of the work is organized as follows: 
Section~\ref{section:preliminaries} provides the functional analytic setting of the boundary integral operators, the
definition of the mesh, B-splines and NURBS together with their basic properties.
Auxiliary results that are used in the proof of our main results are stated in Section~\ref{section:aux}.
In Section~\ref{section:precond}, we define our local multilevel diagonal preconditioner for the hypersingular integral
operator on closed and open boundaries and prove its optimality (Theorem~\ref{thm:PAS}).
Then, in Section~\ref{section:weakly precond}, we extend our local multilevel diagonal preconditioner to the
weakly-singular case and give a proof of its optimality (Theorem~\ref{thm:PASV}).
Finally, in Section~\ref{sec:numerics} we restate the abstract results for additive Schwarz operators in matrix
formulation (Corollary~\ref{cor:main}).
Moreover, numerical examples for closed and open boundaries are presented and some aspects of
implementation are discussed.


\section{Preliminaries}
\label{section:preliminaries}
\subsection{Notation}
Throughout and without any ambiguity, $|\cdot|$ denotes the absolute value of scalars, the Euclidean norm of vectors in $\R^2$, the measure of a set in $\R$ (e.g., the length of an interval), or the arclength of a curve in $\R^{2}$.
We write $A\lesssim B$ to abbreviate $A\le cB$ with some generic constant $c>0$, which is clear from the context.
Moreover, $A\simeq B$ abbreviates $A\lesssim B\lesssim A$. 
Throughout, mesh-related quantities have the same index, e.g., $\NN_{\bullet}$ is the set of nodes of the partition $\TT_{\bullet}$, and $h_{\bullet}$ is the corresponding local mesh-width etc. 
The analogous notation is used for partitions $\TT_{\circ}$ resp.\ $\TT_\ell$ etc. We sometimes use  \,$\widehat{\cdot}$\, to transform notation on the boundary to the parameter domain.
The most important symbols are listed in Table~\ref{table}.

\begin{table}
 \caption{Important symbols}
 \label{table}\begin{minipage}{0.49\textwidth}
{\tiny \begin{tabular}{lll}
\hline
Name & Description &First appearance \\
\hline
$\widehat B_{\bullet,i,p}$ & B-spline& Section \ref{subsec:splines}\\
${B}_{\bullet,i,q}$& B-spline on $\Gamma$&Section \ref{section:igabem}\\
$\overline B_{\bullet,i,p}$&continuous B-spline on $\Gamma$&Section \ref{section:igabem}\\
$\widehat B_{\bullet,i,p}^*$&dual B-spline&Section \ref{section:scott}\\
$\gen(\cdot)$& generation fct.&Section \ref{section:multilevel}\\
$\widehat h_{\bullet,T} $&length in $[a,b]$ &Section \ref{section:boundary:discrete}\\
$h_{\bullet,T} $&arclength&Section \ref{section:boundary:discrete}\\
$\widehat h_\bullet(\cdot), h_{\bullet}(\cdot)$&mesh-size functions&Section \ref{section:boundary:discrete}\\
$\overline h_{{\rm uni}(m)}$&uniform mesh-size in $[a,b]$&Section \ref{section:multilevel}\\
$\II_{\ell}$ &index set&Section \ref{section:precond}\\
$J_{\bullet}$&Scott-Zhang operator&Section \ref{section:scott}\\
$\widehat \KK_{\bullet}$&knot vector in $[a,b]$&Section \ref{section:boundary:discrete}\\
${\KK}_{\bullet}$&knot vector&Section \ref{section:boundary:discrete}\\
$\widetilde\NN_{\circ\setminus\bullet}$& set of new knots&Section \ref{section:scott}\\
$\K$&admissible knot vectors&Section \ref{section:mesh-refinement}\\
$\level_{\bullet}(\cdot)$&level fct.&Section \ref{section:multilevel}\\
$N_{\bullet}$&number of knots& Section \ref{section:boundary:discrete}\\
$\NN_{\bullet}$& set of nodes&Section \ref{section:boundary:discrete}\\
$o$& $1$ if $\Gamma=\partial\Omega$, $0$ else&Section~\ref{section:igabem} \\
$p$&positive polynomial order&Section \ref{section:boundary:discrete}\\
$\PASV$&Schwarz operator for $\VV$&Section \ref{section:weakly precond}\\
$\PAS$&Schwarz operator for $\WW$&Section \ref{section:precond}\\
$\refine(\cdot)$&set of refined knot vectors &Section \ref{section:mesh-refinement}\\
\hline
\end{tabular}}
\end{minipage}
\begin{minipage}{0.49\textwidth}
{\tiny\begin{tabular}{lll}
\hline
Name & Description &First appearance\\
\hline
$\widehat R_{\bullet,i,p}$&NURBS&Section \ref{subsec:splines}\\
$R_{\bullet,i,p}$&NURBS on $\Gamma$&Section \ref{section:igabem}\\
$\overline R_{\bullet,i,p}$&continuous NURBS on $\Gamma$&Section \ref{section:igabem}\\
$\widehat R_{\bullet,i,p}^*$&dual NURBS&Section \ref{section:scott}\\
$t_{\bullet,i}$&knot&Section \ref{subsec:splines}\\
$\TT_{\bullet}$&mesh&Section \ref{section:boundary:discrete}\\
$w_{\bullet,i}$&weight&Section \ref{subsec:splines}\\
$\widehat w(\cdot)$&denominator&Section \ref{section:igabem}\\
$w_{\min}, w_{\max}$&bounds for weights&Section \ref{section:igabem}\\
$\WWe_{\bullet}$&weight vector&Section \ref{section:igabem}\\
$\XX_{\bullet}$&NURBS space for $\WW$&Section \ref{section:igabem}\\
$\widetilde\XX_\ell$&space of new NURBS&Section \ref{section:precond}\\
$\XX_{\ell,i}$&one-dim. subspace&Section \ref{section:precond}\\
$\YY_{\bullet}$&spline space for $\VV$&Section \ref{section:weakly precond}\\
$\widetilde\YY_\ell^0$&space of new splines&Section \ref{section:weakly precond} \\
$z_{\bullet,j}$&node&Section \ref{section:boundary:discrete}\\
$\widetilde{z}_{\ell,i}$&new knot&Section \ref{section:precond}\\
${\widehat \kappa}_{\bullet}$& local mesh-ratio in $[a,b]$&Section \ref{section:boundary:discrete}\\
${\widehat \kappa}_{\max}$& bound for local mesh-ratio in $[a,b]$&Section \ref{section:mesh-refinement}\\
$\Pi_{{\rm uni}(m)}$&projection on unif. space&Section \ref{section:multilevel}\\
$\omega_{\bullet}^m(\cdot)$&patch&Section \ref{section:boundary:discrete}\\
$\#_{\bullet}$&multiplicity&Section \ref{section:boundary:discrete}\\
\hline
 \end{tabular}}
 \end{minipage}\end{table}

\def\Cgamma{C_\Gamma}
\subsection{Sobolev spaces}
\label{section:sobolev}
The usual Lebesgue and Sobolev spaces on $\Gamma$ are denoted by $L^2(\Gamma)=H^0(\Gamma)$ and  
$H^1(\Gamma)$. 
We  introduce the corresponding seminorm on any measurable subset $\Gamma_0\subseteq\Gamma$ via
 \begin{align}
 |v|_{H^{1}({\Gamma_0})} := \norm{\partial_\Gamma v}{L^2(\Gamma_0)}\quad \text{for all }v\in H^1(\Gamma),
 \end{align}
with the arclength derivative $\partial_\Gamma$.
We have that
\begin{align}
\norm{v}{H^1(\Gamma)}^2= \norm{v}{L^2(\Gamma)}^2+|v|_{H^1(\Gamma)}^2\quad\text{for all }v\in H^1(\Gamma),
\end{align}

Moreover, $\H^1(\Gamma)$ is the space of 
$H^1(\Gamma)$ functions, which have a vanishing trace on the relative boundary 
$\partial\Gamma$ equipped with the same norm.
On $\Gamma$, Sobolev spaces of fractional order $0<\sigma<1$ are defined by
the 
$K$-method of interpolation~\cite[Appendix B]{mclean}: 
For $0<\sigma<1$, we let
$H^\sigma(\Gamma) := [L^2(\Gamma),H^1(\Gamma)]_\sigma$ 
and $\H^\sigma(\Gamma):=[L^2(\Gamma),\H^1(\Gamma)]_\sigma$.
We also introduce the Sobolev-Slobodeckij seminorm 
\begin{align}\label{eq:ss seminorm}
|v|_{H^{\sigma}(\Gamma_0)}:=\Big(\int_{\Gamma_0}\int_{\Gamma_0}\frac{|v(x)-v(y)|^2}{|x-y|^{1+2\sigma}}\,dx\,dy\Big)^{1/2} \quad\text{for all }v\in H^\sigma(\Gamma).
\end{align}

For $0<\sigma\le1$, Sobolev spaces of negative order are defined by duality
$H^{-\sigma}(\Gamma) := \H^{\sigma}(\Gamma)^*$ and 
$\H^{-\sigma}(\Gamma) := H^{\sigma}(\Gamma)^*$, where duality is understood with
respect to the extended $L^2(\Gamma)$-scalar
product $\dual\cdot\cdot_\Gamma$. 
In general, there holds the continuous inclusion $\H^{\pm \sigma}(\Gamma)\subseteq H^{\pm \sigma}(\Gamma)$ with $\norm{v}{H^{\pm \sigma}(\Gamma)}\lesssim\norm{v}{\H^{\pm \sigma}(\Gamma)}$ for all $v\in \H^{\pm\sigma}(\Gamma)$.
We note that $\H^{\pm \sigma}(\Gamma)
= H^{\pm \sigma}(\Gamma)$ for $0<\sigma<1/2$ with equivalent norms. Moreover,
it holds for $\Gamma=\partial\Omega$ that $\H^{\pm \sigma}(\partial\Omega) = H^{\pm \sigma}(\partial\Omega)$ 
even with equal norms for all $0<\sigma\le1$.
Finally, the treatment of the closed boundary $\Gamma=\partial\Omega$ 
requires the definition of $H^{\pm \sigma}_0(\partial\Omega) = \set{v\in H^{\pm \sigma}(\partial\Omega)}{\dual{v}{1}_{\partial\Omega} = 0}$
for all $0\le \sigma\le1$.

Details and equivalent definitions of the Sobolev spaces 
are, found, e.g., in \cite{mclean,ss}.

\subsection{Hypersingular integral equation}
\label{subsec:hypsing}
For $0\le \sigma\le1$, the hypersingular integral operator 
$\WW:\H^{\sigma}(\Gamma)\to H^{\sigma-1}(\Gamma)$ is well-defined, linear, and continuous. Recall that $\Gamma$ and $\partial\Omega$ are supposed to be connected.

For $\Gamma\subsetneqq\partial\Omega$ and $\sigma=1/2$, 
$\WW:\H^{1/2}(\Gamma)\to H^{-1/2}(\Gamma)$ is symmetric and 
elliptic. 
Hence,
\begin{align}
\edual{u}{v}:=\dual{\WW u}{v}_\Gamma\quad\text{for all }u,v\in \H^{1/2}(\Gamma),
\end{align}
 defines an equivalent scalar product on  
$\H^{1/2}(\Gamma)$ with corresponding norm $\enorm{\cdot}$.

For  $\Gamma=\partial\Omega$, the 
operator $\WW$ is symmetric and elliptic up to the constant functions, i.e., 
$\WW:H^{1/2}_0(\partial\Omega)\to H^{-1/2}_0(\partial\Omega)$ is 
elliptic. In particular,
\begin{align}
\edual{u}{v}:=\dual{\WW u}{v}_{\partial\Omega} + \dual{u}{1}_{\partial\Omega}\dual{v}{1}_{\partial\Omega}\quad \text{for all }u,v\in \H^{1/2}(\Gamma),
\end{align}
defines an equivalent scalar product on $H^{1/2}(\partial\Omega)=\H^{1/2}(\partial\Omega)$  with  norm $\enorm{\cdot}$.

With this notation and provided that $f\in H^{-1/2}_0(\Gamma)$ in
case of $\Gamma = \partial\Omega$, the strong form~\eqref{eq:hypsing} is equivalently stated in variational form: Find $u\in \widetilde H^{1/2}(\Gamma)$ such that
\begin{align}
\label{eq:weakform2}
 \edual{u}{v} = \dual{f}{v}_\Gamma
 \quad\text{for all }v\in \H^{1/2}(\Gamma).
\end{align}
Therefore, the  Lax-Milgram lemma applies and hence \eqref{eq:hypsing}  admits a unique solution $u\in \H^{1/2}(\Gamma)$.
More details and proofs are found, e.g., in \cite{mclean,ss,s}.
\subsection{Weakly-singular integral equation}
\label{subsec:hypsing}
For $0\le \sigma\le1$, the weakly-singular integral operator 
$\VV:\H^{\sigma-1}(\Gamma)\to H^{\sigma}(\Gamma)$ is well-defined, linear, and continuous. 
For $\Gamma=\partial\Omega$, we suppose $\diam(\Omega)<1$.

For $\sigma=1/2$, 
$\VV:\H^{-1/2}(\Gamma)\to H^{1/2}(\Gamma)$ is symmetric and 
elliptic. 
In particular,
\begin{align}
\edualV{\phi}{\psi}:=\dual{\VV \phi}{\psi}_\Gamma\quad \text{for all }\phi,\psi\in \H^{-1/2}(\Gamma),
\end{align}
 defines an equivalent scalar product on 
$\H^{-1/2}(\Gamma)$ with corresponding norm $\enormV{\cdot}$.
With this notation, the strong form~\eqref{eq:weaksing} with data $g\in H^{1/2}(\Gamma)$ is equivalently stated by
\begin{align}
\label{eq:weakform2}
 \edualV{\phi}{\psi} = \dual{g}{\psi}_\Gamma
 \quad\text{for all }\psi\in \H^{-1/2}(\Gamma).
\end{align}
Therefore, the Lax-Milgram lemma applies and hence  \eqref{eq:weaksing} admits a unique solution $\phi\in \H^{-1/2}(\Gamma)$.
More details and proofs are found, e.g.,  in  \cite{mclean,ss,s}.

\subsection{Boundary parametrization}
\label{subsec:boundary parametrization}
We assume that either $\Gamma=\partial\Omega$ is parametrized by a closed continuous and
piecewise continuously differentiable path $\gamma:[a,b]\to\Gamma$ with $a<b$ such
that the restriction $\gamma|_{[a,b)}$ is even bijective, or that $\Gamma\subsetneqq\partial\Omega$ is parametrized by a bijective continuous and piecewise continuously differentiable path $\gamma:[a,b]\to\Gamma$.  In the first case, we speak of \textit{closed} $\Gamma=\partial\Omega$, whereas the second case is referred to as \textit{open} $\Gamma\subsetneqq\partial\Omega$.
For closed $\Gamma=\partial\Omega$, we denote the $(b-a)$-periodic extension to $\R$ also by $\gamma$.

For the left and right derivative of $\gamma$, we assume that {$\gamma^{\prime_\ell}(t)\neq 0$ for $t\in(a,b]$ and $\gamma^{\prime_r}(t)\neq 0$  for $t\in [a,b)$.}
Moreover, we assume that $\gamma^{\prime_\ell}(t)
+c\gamma^{\prime_r}(t)\neq0$ for all $c>0$ {and $t\in[a,b]$ resp.\ $t\in(a,b)$.} 
Finally, let $\gamma_{\rm arc}:[0,|\Gamma|]\to\Gamma$ denote the arclength parametrization, i.e.,
$|\gamma_{\rm arc}^{\prime_\ell}(t)| = 1 = |\gamma_{\rm arc}^{\prime_r}(t)|$, and its periodic extension. Elementary
differential geometry yields bi-Lipschitz continuity
\begin{align}\label{eq:bi-Lipschitz}
 \Cgamma^{-1} \le \frac{|\gamma_{\rm arc}(s)-\gamma_{\rm arc}(t)|}{|s-t|}\le\Cgamma
 \quad\text{for }s,t\in\R, {\text{ with }\begin{cases}
 |s-t|\le \frac{3}{4}\,|\Gamma|, \text{ for closed }\Gamma,\\
  s\neq t\in [0,|\Gamma|], \text{ for open }\Gamma,
\end{cases}}
\end{align}
where $\Cgamma>0$ depends only on $\Gamma$.
A proof is given in \cite[Lemma 2.1]{diplarbeit} for closed $\Gamma=\partial\Omega$. 
For open $\Gamma\subsetneqq\partial\Omega$, the proof is even simpler. 

\subsection{Boundary discretization}
\label{section:boundary:discrete}
In the following, we describe the different quantities, which define the discretization.

{\bf Nodes $\boldsymbol{z_{\bullet,j}=\gamma(\widehat{z}_{\bullet,j})\in\mathcal{N}_{\bullet}}$ and number of nodes $\boldsymbol{n_{\bullet}}$.}\quad Let $\mathcal{N}_{\bullet}:=\set{z_{\bullet,j}}{j=1,\dots,n_{\bullet}}$ and $z_{\bullet,0}:=z_{\bullet,n_{\bullet}}$ for closed $\Gamma=\partial\Omega$ resp.\ $\mathcal{N}_{\bullet}:=\set{z_{\bullet,j}}{j=0,\dots,n_{\bullet}}$ for open $\Gamma\subsetneqq \partial\Omega$ be a set of nodes. We suppose that $z_{\bullet,j}=\gamma(\widehat{z}_{\bullet,j})$ for some $\widehat{z}_{\bullet,j}\in[a,b]$ with
$a=\widehat{z}_{\bullet,0}<\widehat{z}_{\bullet,1}<\widehat{z}_{\bullet,2}<\dots<\widehat{z}_{\bullet,n_{\bullet}}=b$ such that 
$\gamma|_{[\widehat{z}_{\bullet,j-1},\widehat{z}_{\bullet,j}]}\in C^1([\widehat{z}_{\bullet,j-1},\widehat{z}_{\bullet,j}])$.

{\bf Multiplicity $\boldsymbol{\#_{\bullet} z_{\bullet,j}}$, knot vector $\boldsymbol{\KK_{\bullet}}$ and number of knots $\boldsymbol{N_{\bullet}}$.}\quad
Let $p\in\N$ be some fixed positive polynomial order.
{Each  interior node $z_{\bullet,j}$ has a multiplicity $\#_{\bullet} z_{\bullet,j}\in\{1,2\dots, p\}$ and $\#_{\bullet} {z}_{\bullet,0}=\#_{\bullet} z_{n_{\bullet}}=p+1$}.
This induces knots 
\begin{align}
\KK_{\bullet}=(\underbrace{z_{\bullet,k},\dots,z_{\bullet,k}}_{\#_{\bullet} z_{\bullet,k}-\text{times}},\dots,\underbrace{z_{\bullet,n_{\bullet}},\dots,z_{\bullet,n_{\bullet}}}_{\#_{\bullet} z_{\bullet,n_{\bullet}}-\text{times}}),
\end{align} 
with $k=1$ for $\Gamma=\partial\Omega$ resp.  $k=0$ for $\Gamma\subsetneqq\partial\Omega$.
We define the number of knots in $\gamma((a,b])$ as
\begin{align}
N_{\bullet}:=\sum_{j=1}^{n_{\bullet}} \#_{\bullet} z_{\bullet,j}.
\end{align}

{\bf Elements $\boldsymbol{T_{\bullet,j}}$, partition $\boldsymbol{\mathcal{T}_{\bullet}}$.} \quad
Let $\mathcal{T}_{\bullet}=\{T_{\bullet,1},\dots,T_{\bullet,n_{\bullet}}\}$ be a partition of $\Gamma$ into compact and connected segments $T_{\bullet,j}=\gamma(\widehat{T}_{\bullet,j})$ with $\widehat{T}_{\bullet,j}=[\widehat{z}_{\bullet,j-1},\widehat{z}_{\bullet,j}]$.

{\bf Local mesh-sizes $\boldsymbol{\widehat h_{\bullet,T}}, \boldsymbol{h_{\bullet,T}} $ and $\boldsymbol{\widehat h_{\bullet}}, \boldsymbol{h_{\bullet}}$.}\quad
For $T\in\TT_\bullet$, we define $\widehat h_{\bullet,T}:=|\gamma^{-1}(T)|$ as its length in the parameter domain, and $h_{\bullet,T}:=|T|$ as its arclength.
We define the local mesh-width functions $\widehat h_{\bullet}, h_{\bullet}\in L^\infty(\Gamma)$ by $\widehat h_{\bullet}|_T=\widehat h_{\bullet,T}$ and $h_{\bullet}|_T=h_{\bullet,T}$.

{\bf Local mesh-ratio $ \boldsymbol{{\widehat \kappa}_{\bullet}}$.}\quad
We define the {local mesh-ratio} by
\begin{align}\label{eq:meshratio}
 \widehat \kappa_{\bullet}&:=\max\set{\widehat h_{\bullet,T}/\widehat h_{\bullet,T'}}{{T},{T}'\in\TT_{\bullet} \text{ with }  T\cap T'\neq \emptyset}.
\end{align}

{\bf Patches $\boldsymbol{\omega_{\bullet}^m(z)}$ and $\boldsymbol{\omega_{\bullet}^m(\Gamma_0)}$.}
For each set $\Gamma_0\subseteq\Gamma$, we  {inductively define for $m\in\N_0$}
{\begin{align*}
 \omega_{\bullet}^m(\Gamma_0) :=\begin{cases} \Gamma_0\quad&\text{if }m=0,\\
 \omega_{\bullet}(\Gamma_0):= \bigcup\set{T\in \TT_{\bullet}}{T\cap \Gamma_0\neq \emptyset}\quad&\text{if }m=1,\\
 \omega_{\bullet}(\omega_{\bullet}^{m-1}(\Gamma_0)) \quad&\text{if }m>1.\end{cases}
\end{align*}}
For points $z\in\Gamma$, we abbreviate $\omega_{\bullet}(z):=\omega_{\bullet}(\{z\})$ and $\omega_\bullet^m(z):=\omega_\bullet^m(\{z\})$.

\subsection{Admissible knot vectors}\label{section:mesh-refinement}
Throughout, we consider  families of knot vectors $\KK_{\bullet}$ as in Section~\ref{section:boundary:discrete} with uniformly bounded local mesh-ratio, i.e., we suppose the existence of $\widehat \kappa_{\rm max}\ge 1$ with 
\begin{align}\label{eq:kappamax}
{\widehat \kappa}_{\bullet}\le {\widehat \kappa}_{\max}.
\end{align}
Let $\KK_{\bullet}$ and $\KK_{\circ}$ be knot vectors  \eqref{eq:kappamax}.
We say that  $\KK_{\circ}$ is finer than $\KK_{\bullet}$ and write $\KK_{\circ}\in\refine(\KK_{\bullet})$ if $\KK_{\bullet}$ is a subsequence  of $\KK_{\circ}$ such that $\KK_{\circ}$ is obtained from $\KK_{\bullet}$  via iterative dyadic bisections in the parameter domain and multiplicity increases.
Formally, this means that $\NN_{\bullet}\subseteq\NN_{\circ}$ with  $\#_{\bullet} z\le\#_{\circ} z$ for all $z\in\NN_{\bullet}\cap \NN_{\circ}$, and that for all $T\in\TT_{\circ}$ there exists $T'\in\TT_{\bullet}$ and $j\in\N_0$  with $T\subseteq T'$ and $|\gamma^{-1}(T')|=2^{-j}|\gamma^{-1}(T)|$.
Throughout, we suppose that all considered knot vectors $\KK_{\bullet}$ with \eqref{eq:kappamax} are finer than some  fixed initial knot vector $\KK_0$.
We call such a knot vector admissible. 
The set of all these knot vectors is abbreviated by~$\mathbb{K}$.

\subsection{B-splines and NURBS}
\label{subsec:splines}
Throughout this subsection, we consider \textit{knots} {$\widehat{\mathcal{K}}_{\bullet}:=(t_{\bullet,i})_{i\in\Z}$} on $\R$ with multiplicity $\#_{\bullet} t_{\bullet,i}$, which satisfy that
$t_{\bullet,i-1}\leq t_{\bullet,i}$ for $i\in \Z$ and $\lim_{i\to \pm\infty}t_{\bullet,i}=\pm \infty$.
Let $\widehat{\mathcal{N}}_{\bullet}:=\set{t_{\bullet,i}}{i\in\Z}=\set{\widehat{{z}}_{\bullet,j}}{j\in \Z}$ denote the corresponding set of nodes with $\widehat{{z}}_{\bullet,j-1}<\widehat{{z}}_{\bullet,j}$ for $j\in\Z$.
For $i\in\Z$, the $i$-th \textit{B-spline} of degree $q$ is defined inductively by
\begin{align}
\begin{split}
\widehat B_{\bullet,i,0}&:=\chi_{[t_{\bullet,i-1},t_{\bullet,i})},\\
\widehat B_{\bullet,i,q}&:=\beta_{\bullet,i-1,q} \widehat B_{\bullet,i,q-1}+(1-\beta_{\bullet,i,q}) \widehat B_{\bullet,i+1,q-1} \quad \text{for } q\in \N,
\end{split}
\end{align}
where, for $t\in \R$,
\begin{align}
\begin{split}
\chi_{[t_{\bullet,i-1},t_{\bullet,i})}(t):=
\begin{cases}
1 \quad &\text{if }t\in {[t_{\bullet,i-1},t_{\bullet,i})},\\
0 \quad & \text{if } t\not\in {[t_{\bullet,i-1},t_{\bullet,i})},
\end{cases}
\quad
\beta_{\bullet,i,q}(t):=
\begin{cases}
\frac{t-t_{\bullet,i}}{t_{\bullet,i+q}-t_{\bullet,i}} \quad &\text{if } t_{\bullet,i}\neq t_{\bullet,i+q},\\
0 \quad &\text{if } t_{\bullet,i}= t_{\bullet,i+q}.
\end{cases}
\end{split}
\end{align}
The following lemma collects 
basic properties of B-splines. 
Proves are found, e.g., in \cite{Boor-SplineBasics}.

\begin{lemma}\label{lem:properties for B-splines}
For an interval $I=[a,b)$ and $q\in \N_0$, the following assertions  {\rm(i)--(vii)} hold:
\begin{enumerate}[\rm(i)]
\item \label{item:spline basis}
The set $\set{\widehat B_{\bullet,i,q}|_I}{i\in \Z, \widehat B_{\bullet,i,q}|_I\neq 0}$ is a basis for the space of all right-continuous $\widehat{\mathcal{N}_{\bullet}}$-piecewise polynomials of degree lower or equal $q$ on $I$, which are, at each knot $t_{\bullet,i}$, $q-\#_{\bullet} t_{\bullet,i}$ times continuously differentiable if $q-\#_{\bullet} t_{\bullet,i}\ge0$. 
\item \label{item:B-splines local} For $i\in\Z$,  $\widehat B_{\bullet,i,q}$ vanishes outside the interval $[t_{\bullet,i-1},t_{\bullet,i+q})$. 
It is positive on the open interval $(t_{\bullet,i-1},t_{\bullet,i+q})$  and a polynomial of degree $q$ on each interval $(\widehat z_{j-1},\widehat z_j)\subseteq(t_{\bullet,i-1},t_{\bullet,i+q})$ for $j\in\Z$.
\item \label{item:B-splines determined} For $i\in \Z$,  $\widehat B_{\bullet,i,q}$ is completely determined by the $q+2$ knots $t_{\bullet,i-1},\dots,t_{\bullet,i+q}$, wherefore we also write
\begin{align}
\widehat B(\cdot|t_{\bullet,i-1},\dots,t_{\bullet,i+q}):=\widehat B_{\bullet,i,q}
\end{align}

\item\label{item:B-splines partition} The  B-splines of degree $q$ form a (locally finite) partition of unity, i.e.,
\begin{equation}
\sum_{i \in\Z} \widehat B_{\bullet,i,q}=1\quad \text{on }\R.
\end{equation}
\item\label{item:interpolatoric} For $i\in \Z$ with $t_{\bullet,i-1}<t_{\bullet,i}=\dots=t_{\bullet,i+q}<t_{\bullet,i+q+1}$, it holds that
\begin{align}
\widehat B_{\bullet,i,q}(t_{\bullet,i}-)=1\quad\text{and} \quad \widehat B_{\bullet,i+1,q}(t_{\bullet,i})=1.
\end{align}
\item \label{item:derivative of splines}
Suppose the convention $q/0:=0$.
For $q\ge 1 $ and $i\in\Z$, it holds for the right derivative
\begin{equation}\label{eq:derivative of splines}
\widehat B_{\bullet,i,q}'^{_r}=\frac{q}{t_{\bullet,i+q-1}-t_{\bullet,i-1}} \widehat B_{\bullet,i,q-1}-\frac{q}{t_{\bullet,i+q}-t_{\bullet,i}}\widehat B_{\bullet,i+1,q-1}.
\end{equation}
\item\label{item:knot insertion}
Let $t'\in(t_{\ell-1},t_\ell]$ 
for some $\ell\in\Z$  and let $\widehat\KK_{\circ}$ be the refinement of $\widehat{\mathcal{K}}_{\bullet}$, obtained by adding $t'$.
{Then, for all coefficients $(a_{\bullet,i})_{i\in\Z}$, there exists $(a_{\circ,i})_{i\in\Z}$ such that}
\begin{equation}
\sum_{i\in \Z} a_{\bullet,i} \widehat B_{\bullet,i,q}=\sum_{i\in \Z} a_{\circ,i} \widehat B_{\circ,i,q}
\end{equation}
With the multiplicity $\#_{\circ}t'$ of $t'$ in the knots $\widehat\KK_{\circ}$, the new coefficients can be chosen as 
\begin{align}
\begin{split}
a_{\circ,i}=\begin{cases}
a_{\bullet,i} \quad &\text{if } i\leq \ell-q+\#_{\circ} t'-1,\\
(1-\beta_{\bullet,i-1,q}(t')) a_{\bullet,i-1} + \beta_{\bullet,i-1,q}(t') a_{\bullet,i} \quad &\text{if } \ell-q+\#_{\circ} t'\leq i\leq \ell,\\
a_{\bullet,i-1}\quad &\text{if } \ell+1 \leq i.
\end{cases}
\end{split}
\end{align}
If one assumes $\#_{\bullet} t_i\le q+1$ for all $i\in\Z$, these coefficients are unique.
{Note that these three cases are equivalent to $t_{\bullet,i+q-1}\le t'$, $t_{\bullet,i-1}<t'<t_{\bullet,i+q-1}$, resp. $t'\le t_{\bullet,i-1}$.}\hfill$\square$
\end{enumerate}
\end{lemma}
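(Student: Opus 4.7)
The strategy is induction on the degree $q$, with the recursion defining $\widehat B_{\bullet,i,q}$ providing the engine for almost every item; the remaining two items (\ref{item:spline basis}) and (\ref{item:knot insertion}) are then reduced to what has already been established.

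First I would prove (\ref{item:B-splines local}), (\ref{item:B-splines determined}), (\ref{item:B-splines partition}) and (\ref{item:derivative of splines}) together by induction on $q$. The base case $q=0$ is immediate from the definition $\widehat B_{\bullet,i,0}=\chi_{[t_{\bullet,i-1},t_{\bullet,i})}$. For the step, assume everything holds for $q-1$. The support claim (\ref{item:B-splines local}) follows because the recursion expresses $\widehat B_{\bullet,i,q}$ as a convex combination (on the support) of $\widehat B_{\bullet,i,q-1}$ and $\widehat B_{\bullet,i+1,q-1}$, whose supports are $[t_{\bullet,i-1},t_{\bullet,i+q-1})$ and $[t_{\bullet,i},t_{\bullet,i+q})$; positivity on the open interval and polynomiality of degree $q$ on each knot interval are then read off the recursion, using that $\beta_{\bullet,i-1,q}$ and $1-\beta_{\bullet,i,q}$ are linear in $t$. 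Item (\ref{item:B-splines determined}) is visible directly from the recursion since the only knots appearing in the formula for $\widehat B_{\bullet,i,q}$ are $t_{\bullet,i-1},\dots,t_{\bullet,i+q}$. For the partition of unity (\ref{item:B-splines partition}), sum the recursion over $i$ and use an index shift together with $\beta_{\bullet,i-1,q}+(1-\beta_{\bullet,i-1,q})=1$ to reduce it to the inductive hypothesis. Finally (\ref{item:derivative of splines}) follows by differentiating the recursion, noting that the right-derivatives of $\beta_{\bullet,i-1,q}$ and $1-\beta_{\bullet,i,q}$ are constants that collapse to the stated quotients.

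Next I would derive (\ref{item:interpolatoric}) from what is already in hand: if $t_{\bullet,i}=\dots=t_{\bullet,i+q}$ has multiplicity $q+1$, then (\ref{item:B-splines local}) shows that $\widehat B_{\bullet,j,q}$ vanishes at $t_{\bullet,i}-$ whenever $j\notin\{i\}$ (respectively at $t_{\bullet,i}$ whenever $j\notin\{i+1\}$), so the partition of unity (\ref{item:B-splines partition}) forces the remaining value to be $1$. For item (\ref{item:spline basis}), I would proceed in two steps. The continuity at a knot $t_{\bullet,k}$ with multiplicity $\#_\bullet t_{\bullet,k}$ follows by iteratively applying (\ref{item:derivative of splines}) and using (\ref{item:B-splines local}): each differentiation drops the degree by one, and the support of $\widehat B_{\bullet,i,q}$ together with the knot multiplicity account exactly for the $q-\#_\bullet t_{\bullet,k}$ continuous derivatives. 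Linear independence of the restricted family comes from a standard argument: if a finite linear combination vanishes on $I$, evaluate successively at the leftmost nontrivial knot and use (\ref{item:interpolatoric}) after a temporary refinement that raises the multiplicity of that knot to $q+1$, which by (\ref{item:knot insertion}) preserves the combination; this peels off one coefficient at a time. A dimension count (each knot of multiplicity $m$ imposes $q+1-m$ continuity conditions on the space of piecewise polynomials) then shows the family spans.

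For (\ref{item:knot insertion}), which I expect to be the most delicate, the plan is a direct computation. Using the recursion once for the refined B-splines $\widehat B_{\circ,i,q}$ and substituting the conjectured coefficient formula, one rewrites $\sum_i a_{\circ,i}\widehat B_{\circ,i,q}$ as a sum of $\widehat B_{\circ,i,q-1}$'s with coefficients that are affine in the $a_{\bullet,i}$. By induction on $q$ (the case $q=0$ being trivial), these coefficients match those obtained by applying the recursion to $\sum_i a_{\bullet,i}\widehat B_{\bullet,i,q}$ after inserting $t'$, which reduces to the $q-1$ case. The three explicit cases for $a_{\circ,i}$ correspond exactly to the three positions of $t'$ relative to $(t_{\bullet,i-1},t_{\bullet,i+q-1})$, as stated. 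Uniqueness under $\#_\bullet t_i\le q+1$ follows from the basis property (\ref{item:spline basis}) applied on the refined knot vector. The main obstacle here is bookkeeping the index shift between $\widehat{\mathcal K}_\bullet$ and $\widehat{\mathcal K}_\circ$ (since one knot is added), and checking that the convex combination weight $\beta_{\bullet,i-1,q}(t')$ is the correct one in each of the three cases; this is pure computation, but needs to be organized carefully.
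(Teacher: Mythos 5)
The paper itself does not prove Lemma~\ref{lem:properties for B-splines}: immediately after its statement it refers the reader to de Boor's ``B(asic)-Spline Basics'' \cite{Boor-SplineBasics}. So there is no in-paper proof to compare against; your task is really to reconstruct the standard proofs, and your overall plan (induction on $q$ driving items (\ref{item:B-splines local})--(\ref{item:B-splines partition}) and (\ref{item:derivative of splines}), then the remaining items) is indeed the standard route and largely sound.

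Two points need repair. First, your argument for (\ref{item:interpolatoric}) does not actually follow from (\ref{item:B-splines local}) and (\ref{item:B-splines partition}) alone. For $j\in\{i-q,\dots,i-1\}$, the support of $\widehat B_{\bullet,j,q}$ is $[t_{\bullet,j-1},t_{\bullet,j+q})$ with $t_{\bullet,j+q}=t_{\bullet,i}$, and (\ref{item:B-splines local}) only tells you the B-spline is \emph{positive} on the open interval $(t_{\bullet,j-1},t_{\bullet,i})$; it does not tell you that the left limit at the right endpoint of the support is $0$. That left limit vanishes precisely because those B-splines have the right amount of continuity at $t_{\bullet,i}$ (since the multiplicity of $t_{\bullet,i}$ among their own $q+2$ knots is at most $q$), which is exactly the smoothness claim you prove later as part of (\ref{item:spline basis}). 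The clean way out is to prove (\ref{item:interpolatoric}) by a direct induction on $q$ via the recursion, noting that $\beta_{\bullet,i-1,q}(t_{\bullet,i})=1$ and that the second term collapses because $t_{\bullet,i}=t_{\bullet,i+q}$. Second, as written your proof of (\ref{item:spline basis}) invokes (\ref{item:knot insertion}) (to refine and raise multiplicity without changing the function), while the uniqueness part of (\ref{item:knot insertion}) invokes (\ref{item:spline basis}). This is circular in the order you present them; you would need to separate (\ref{item:knot insertion}) into an existence statement (provable by the direct induction on $q$ you describe, before (\ref{item:spline basis})) and a uniqueness statement (proved afterwards via (\ref{item:spline basis})), or else use one of the standard non-refinement arguments for local linear independence, e.g.\ the dual functionals of de Boor--Fix or Marsden's identity, which is how \cite{Boor-SplineBasics} proceeds.
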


\begin{remark}\label{rem:B lincomb}
Let $j\in\Z$ and $(\delta_{ij})_{i\in\Z}$ be the corresponding Kronecker sequence. 
Choosing $(a_{\bullet,i})_{i\in\Z}=(\delta_{ij})_{i\in\Z}$ in Lemma~\ref{lem:properties for B-splines} \eqref{item:knot insertion}, one sees that $\widehat B_{\bullet,j,q}$ is a linear combination of $\widehat B_{\circ,j,q}$ and $\widehat B_{\circ,j+1,q}$, where $\widehat B_{\bullet,j,q}=\widehat B_{\circ,j,q}$ if  $j\leq \ell-q+\#_{\circ} t'-2$ and $\widehat B_{\bullet,j,q}=\widehat B_{\circ,j+1,q}$ if  $\ell+1 \leq j$.
\end{remark}

In addition to the knots $\widehat{\mathcal{K}}_{\bullet}=(t_{\bullet,i})_{i\in\Z}$, we consider fixed positive weights {$\mathcal{W}_{\bullet}:=(w_{\bullet,i})_{i\in\Z}$} with $w_{\bullet,i}>0$.
For $i\in \Z$ and $q\in \N_0$, we define the $i$-th 
NURBS by
\begin{equation}
\widehat R_{\bullet,i,q}:=\frac{w_{\bullet,i} \widehat B_{\bullet,i,q}}{\sum_{k\in\Z}  w_{\bullet,k}\widehat B_{\bullet,k,q}}.
\end{equation}
Note that the denominator is locally finite and positive.

For any $q\in\N_0$, we define the B-spline space
\begin{equation}
\widehat{\mathcal{S}}^q(\widehat{\mathcal{K}}_{\bullet}):=\left\{\sum_{i\in\Z}a_i \widehat B_{\bullet,i,q}:a_i\in\R\right\}
\end{equation}
as well as the NURBS space
\begin{equation}\label{eq:NURBS space defined} 
\widehat{\mathcal{S}}^q(\widehat{\mathcal{K}}_{\bullet},\mathcal{W}_{\bullet}):=\left\{\sum_{i\in\Z}a_i \widehat R_{\bullet,i,q}:a_i\in\R\right\}=\frac{\widehat{\mathcal{S}}^q(\widehat{\mathcal{K}}_{\bullet})}{\sum_{k\in \Z} w_{\bullet,k} \widehat B_{\bullet,k,q}}.
\end{equation}

We define for $0<\sigma<1$, any interval $I$, and ${\widehat v}\in L^2(I)$ the Sobolev-Slobodeckij seminorm  $|\widehat v|_{H^\sigma(I)}$ as in \eqref{eq:ss seminorm} (with $\Gamma_0$ and $v$  replaced by $I$ and $\widehat v$).
\begin{lemma}\label{lem:basis scaling}
Let $q>0$, $0<\sigma<1$, and $K, w_{\min}, w_{\max}>0$.
Suppose that the weights $\WWe_{\bullet}$ are bounded by $w_{\min}$ and $w_{\max}$, i.e.,  
\begin{align}
w_{\min}\le\inf_{i\in\Z} w_{\bullet,i} \le\sup_{i\in\Z} w_{\bullet,i}\le w_{\max},
\end{align}
 and that the local mesh-ratio on $\R$ is bounded by $K$, i.e.,  
\begin{align}
\sup\Big\{\max\Big(\frac{\widehat{z}_j-\widehat{z}_{j-1}}{\widehat{z}_{j-1}-\widehat{z}_{j-2}},\frac{\widehat{z}_j-\widehat{z}_{j-1}}{\widehat{z}_{j+1}-\widehat{z}_{j}}\Big):j\in\Z\Big\}\le K.
\end{align}
Then, there exists a constant $\Cscale>0$, which depends only on $q, w_{\min}, w_{\max}$, and $K$, such that
 for all $i\in\Z$ with $|\supp (\widehat R_{\bullet,i,q})|>0$, it holds that
\begin{align}\label{eq:basis scaling}
|\supp (\widehat R_{\bullet,i,q})|^{1-2\sigma}\le \Cscale |\widehat R_{\bullet,i,q}|_{H^\sigma(\supp( \widehat R_{\bullet,i,q}))}^2.
\end{align}
\end{lemma}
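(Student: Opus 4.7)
The plan is to reduce \eqref{eq:basis scaling} to a reference configuration via affine rescaling and then establish a uniform positive lower bound on the Sobolev--Slobodeckij seminorm by a compactness argument.

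First I would rescale. Set $I_i := \supp(\widehat R_{\bullet,i,q}) = [t_{\bullet,i-1}, t_{\bullet,i+q}]$, let $h := |I_i|>0$, and define $\widetilde R(s) := \widehat R_{\bullet,i,q}(t_{\bullet,i-1} + hs)$ for $s \in [0,1]$. A direct change of variables in the Sobolev--Slobodeckij double integral yields
\begin{align*}
|\widehat R_{\bullet,i,q}|_{H^\sigma(I_i)}^2 = h^{1-2\sigma}\,|\widetilde R|_{H^\sigma([0,1])}^2.
\end{align*}
Hence \eqref{eq:basis scaling} is equivalent to a uniform positive lower bound $|\widetilde R|_{H^\sigma([0,1])}^2 \ge 1/\Cscale$ over all admissible rescaled configurations.

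Next I would identify a compact parameter space. The value of $\widetilde R$ on $[0,1]$ depends only on the relative positions of the $O(q)$ knots $t_{\bullet,i-q-1}, \ldots, t_{\bullet,i+2q}$ (those defining every B-spline contributing to the denominator) and the corresponding weights $w_{\bullet,i-q}, \ldots, w_{\bullet,i+q}$. After translating by $t_{\bullet,i-1}$ and dividing by $h$, the mesh-ratio bound $K$ forces consecutive element lengths to vary by at most a factor of $K$; iterating this at most $O(q)$ times shows that all normalized knot positions lie in a fixed bounded interval of $\R$ depending only on $q$ and $K$. Stratifying configurations by the multiplicity pattern of the $O(q)$ relevant knots (finitely many such patterns, since each multiplicity is at most $q+1$) and using $w_{\bullet,j} \in [w_{\min}, w_{\max}]$, I obtain within each stratum a compact parameter space $\mathcal P \subset \R^N$ that encodes every admissible reference NURBS.

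Finally, I would conclude by continuity and positivity. On each stratum, $\widetilde R_p$ depends continuously on the parameters $p \in \mathcal P$ via the B-spline recursion, and the denominator $\sum_k w_k \widetilde B_k$ is bounded from below by $w_{\min}$ (since the B-splines form a partition of unity); in particular $0 \le \widetilde R_p \le w_{\max}/w_{\min}$ uniformly. Dominated convergence then shows $p \mapsto |\widetilde R_p|_{H^\sigma([0,1])}^2$ is continuous on $\mathcal P$. Because $q > 0$ and the support of $\widehat B_{\bullet,i,q}$ equals all of $[0,1]$ with $\widehat B_{\bullet,i,q}$ strictly positive on the open interior by Lemma~\ref{lem:properties for B-splines}\eqref{item:B-splines local}, the function $\widetilde R_p$ is a nonconstant continuous function on $[0,1]$; hence $|\widetilde R_p|_{H^\sigma([0,1])}^2 > 0$. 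A positive continuous function on a compact set attains a positive minimum, and taking the smallest such minimum over the finitely many strata yields $\Cscale$.

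The main obstacle lies in Step~3, specifically in handling the degenerate configurations where several knots coalesce to form a multiple knot: one must verify that $\widetilde R_p$ depends continuously on $p$ through these configurations and that it remains nonconstant (ruling out accidental coincidences of multiplicity patterns that could collapse $\widetilde R_p$ to a constant). Both points follow from the explicit B-spline formulas under knot repetition (Lemma~\ref{lem:properties for B-splines}\eqref{item:interpolatoric} and \eqref{item:B-splines local}), which show that in every admissible pattern the basis function still has strictly varying values on $[0,1]$, but care in the stratification is what makes the compactness argument clean.
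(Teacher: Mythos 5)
Your proposal follows essentially the same strategy as the paper's proof: rescale $\supp(\widehat R_{\bullet,i,q})$ affinely onto $[0,1]$ so the factor $|S|^{1-2\sigma}$ drops out, then show the rescaled Sobolev--Slobodeckij seminorm has a uniform positive lower bound over the compact parameter set of normalized knot positions (bounded via $K$) and weights (bounded via $w_{\min},w_{\max}$), using that $q>0$ forces the B-spline/NURBS to be nonconstant. The only stylistic difference is that you stratify by multiplicity pattern and argue continuity on each stratum, whereas the paper avoids any stratification by taking a minimizing sequence over the closed simplex $\{0\le t_1\le\dots\le t_q\le1\}$ and invoking pointwise a.e.\ convergence of the B-spline formula together with dominated convergence, which handles coalescing knots in one stroke; your own concern about continuity "through degenerate configurations" is precisely what that route sidesteps.
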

\begin{proof}
The proof is split into two steps.
\\
\noindent
{\bf Step 1:} First, we suppose  that $w_i=1$ for all $i\in\Z$ and hence $\widehat R_{\bullet,i,q}=\widehat B_{\bullet,i,q}$.
The definition of the B-splines implies their invariance with respect to affine transformations of the knots:  
\begin{align*}
\widehat B(t|t_0,\dots,t_{q+1})=\widehat B(ct+s|ct_0+s,\dots,ct_{q+1}+s)\quad\text{for all }t_0\le \dots\le t_{q+1}, s,t\in\R\text{ and } c>0.
\end{align*}
With the abbreviation $S:=\supp (\widehat B_{\bullet,i,q})=[t_{\bullet,i-1},t_{\bullet,i+q}]$, it hence holds that
\begin{align}\label{eqpr:basis scaling}
\begin{split}
&|\widehat B_{\bullet,i,q}|_{H^\sigma(S)}^2=\int_S\int_S\frac{|\widehat B_{\bullet,i,q}(r)-\widehat B_{\bullet,i,q}(s)|^2}{|r-s|^{1+2\sigma}}\,ds\,dr\\
&\quad=|S|^{1-2\sigma}\int_0^1\int_0^1\frac{|\widehat B(r|0,\frac{t_{\bullet,i}-t_{\bullet,i-1}}{|S|},\dots,\frac{t_{\bullet,i+q-1}-t_{\bullet,i-1}}{|S|},1)-\widehat B(s|\dots)|^2}{|r-s|^{1+2\sigma}}\,ds\,dr\\
&\quad\ge |S|^{1-2\sigma} \inf_{0\le t_1\le\dots\le t_q\le 1} \int_0^1\int_0^1 |\widehat B(r|0,t_1,\dots,t_q,1)-\widehat B(s|0,t_1,\dots,t_q,1)|^2\,ds\,dr,
\end{split}
\end{align}
where for the last inequality we have used that $|r-s|\le 1$.
We use a compactness argument to conclude the proof.
Let $\big((t_{k,1},\dots,t_{k,q})\big)_{k\in\N}$ be a convergent minimizing sequence for the infimum in \eqref{eqpr:basis scaling}.
Let $(t_{\infty,1},\dots,t_{\infty,q})$ be the corresponding limit.
With the definition of the B-splines  one easily verifies  that
\begin{align*}
\widehat B(r|0,t_{k,1},\dots,t_{k,q},1)\to \widehat B(r|0,t_{\infty,1},\dots,t_{\infty,q},1)\quad\text{for almost every }r\in\R.
\end{align*}
The dominated convergence theorem implies that the infimum is attained at $(t_{\infty,1},\dots,t_{\infty,q})$. 
Lemma \ref{lem:properties for B-splines} \eqref{item:B-splines local} especially implies that $\widehat B(\cdot|0,t_{\infty,1},\dots,t_{\infty,q},1)$ is not constant.
Therefore the infimum is positive, and we conclude the proof.
\\
\noindent
{\bf Step 2:} 
Recall that $\widehat R_{\bullet,i,q}=\frac{w_{\bullet,i} \widehat B_{\bullet,i,q}}{\sum_{j=i-q}^{i+q}w_{\bullet,j}\widehat B_{\bullet,j,q}}$. As in Step 1, we transform $\supp (\widehat R_{\bullet,i,q})$ onto the interval $[0,1]$.
Hence, it suffices to prove, with the compact interval $I:=[0-K^q,1+K^q]$, that the infimum
\begin{align*}
\inf_{t_{-q},\dots,t_{-1},t_1,\dots,t_q,t_{q+2},\dots,t_{1+2q}\in I\atop w_{1-q},\dots,w_{1+q}\inÊ[w_{\min},w_{\max}]}\int_0^1\int_0^1 \Big|&\frac{w_1\widehat B(r|0,t_1,\dots,t_q,1)}{\sum_{j=1-q}^{1+q}w_{j}\widehat B(r|t_{j-1},\dots,t_{j+q})}
\\
&\quad-\frac{w_1\widehat B(s|0,t_1,\dots,t_q,1)}{\sum_{j=1-q}^{1+q}w_{j}\widehat B(s|t_{j-1},\dots,t_{j+q})}\Big|^2\,ds\,dr
\end{align*}
is larger than $0$. 
This can be proved analogously as before.
\end{proof}

\subsection{Ansatz spaces}
\label{section:igabem} 
Throughout this section, we  abbreviate $\gamma|_{[a,b)}^{-1}$ with $\gamma^{-1}$ if $\Gamma\subsetneqq \partial\Omega$ is an open boundary.
Additionally to the initial knots $\KK_0\in\K$, suppose that $\mathcal{W}_0=(w_{0,i})_{i=1-p}^{N_0-p}$ are given initial weights with $w_{0,1-p}=w_{0,N_0-p}$, where $N_0=|\KK_0|$ for closed $\Gamma=\partial\Omega$ resp.\ $N_0=|\KK_0|-(p+1)$ for open $\Gamma\subsetneqq\partial\Omega$.
In the weakly-singular case we  assume $w_{0,i}=1$ for $i=1-p,\dots,N_0-p$. 
We extend the corresponding knot vector in the parameter domain, $\widehat{\KK}_0=(t_{0,i})_{i=1}^{N_0}$ if $\Gamma=\partial\Omega$ is closed resp. $\widehat{\KK}_0=(t_{0,i})_{i=-p}^{N_0}$ if $\Gamma\subsetneqq \partial\Omega$ is open, arbitrarily to $(t_{0,i})_{i\in \Z}$ with $t_{0,-p}=\dots=t_{0,0}=a$, $t_{0,i}\le t_{0,i+1}$, $\lim_{i\to \pm\infty}t_{0,i}=\pm \infty$.
For the extended sequence we also write  $\widehat{\mathcal{K}}_0$.
We define the weight function 
\begin{align}
\widehat w:=\sum_{k=1-p}^{N_0-p} w_{0,k}\widehat B_{0,k,p}|_{[a,b]}.
\end{align}

Let $\KK_{\bullet}\in\mathbb{K}$ be an admissible knot vector.
Outside of the interval $(a,b]$, we extend  the corresponding knot sequence $\widehat\KK_{\bullet}$ in the parameter domain exactly as before and write again $\widehat{\KK}_{\bullet}$ for the extension as well.
This guarantees that $\widehat\KK_0$ forms a subsequence of $\widehat\KK_{\bullet}$.
Via {knot insertion} from $\widehat\KK_0$ to $\widehat\KK_{\bullet}$,  Lemma~\ref{lem:properties for B-splines} \eqref{item:spline basis} proves the existence and uniqueness of   weights $\WWe_{\bullet}=(w_{\bullet,i})_{i=1-p}^{N_{\bullet}-p}$ such that 
\begin{align}\label{eq:w}
\widehat w=\sum_{k=1-p}^{N_0-p} w_{0,k}\widehat B_{0,k,p}|_{[a,b]}=\sum_{k=1-p}^{N_{\bullet}-p} w_{\bullet,k} \widehat B_{\bullet,k,p}|_{[a,b]}.
\end{align}
By choosing these weights, we ensure that the denominator of the considered rational splines  does not change.
Lemma \ref{lem:properties for B-splines} \eqref{item:interpolatoric} states  that  $\widehat B_{\bullet,1-p,p}(a)=1=\widehat B_{\bullet,N_{\bullet}-p,p}(b-)$, which implies  that  $w_{\bullet,1-p}=w_{\bullet,N_{\bullet}-p}$.
Further, Lemma \ref{lem:properties for B-splines} \eqref{item:B-splines partition} and \eqref{item:knot insertion} show  that
\begin{align}\label{eq:weights}
w_{\min}:=\min(\WWe_0)\le \min(\WWe_{\bullet})\le \widehat w\le \max(\WWe_{\bullet})\le \max(\WWe_0):=w_{\max}.
\end{align}
In the weakly-singular case there even holds  that $w_{\bullet,i}=1$ for $i=1-p,\dots,N_{\bullet}-p$, and $\widehat w=1$.
Finally, we extend $\mathcal{W}_{\bullet}$ arbitrarily to $(w_{\bullet,i})_{i\in\Z}$ with $w_{\bullet,i}>0$, identify the extension with $\mathcal{W}_{\bullet}$ and set for the hypersingular case
\begin{equation}
\mathcal{S}^p({\mathcal{K}}_{\bullet},\mathcal{W}_{\bullet}):=\set{\widehat V_{\bullet}\circ \gamma^{-1}}{\widehat V_{\bullet}\in\widehat{\mathcal{S}}^p(\widehat{\mathcal{K}}_{\bullet},\mathcal{W}_{\bullet})}
\end{equation}
and for the weakly-singular case
\begin{equation}
\mathcal{S}^{p-1}({\mathcal{K}}_{\bullet}):=\set{\widehat V_{\bullet}\circ \gamma^{-1}}{\widehat V_{\bullet}\in\widehat{\mathcal{S}}^{p-1}(\widehat{\mathcal{K}}_{\bullet})}.
\end{equation}
Lemma \ref{lem:properties for B-splines} \eqref{item:B-splines determined} shows that the definition does not depend on how the sequences are extended.
We define the transformed basis functions
\begin{align}\label{eq:basis def}
{R}_{\bullet,i,p}:=
\widehat R_{\bullet,i,p}\circ\gamma^{-1}\quad\text{and}\quad
 B_{\bullet,i,p-1}:=\widehat B_{\bullet,i,p-1}\circ\gamma^{-1}.
 \end{align}
Later, we will also need the notation $B_{\bullet,i,p}$, which we define analogously.

We introduce the  ansatz space for the hypersingular case
\begin{align}\label{eq:hypsing X0}
\XX_{\bullet}:=\begin{cases}\set{V_{\bullet}\in\mathcal{S}^p({\mathcal{K}}_{\bullet},\mathcal{W}_{\bullet})}{ V_{\bullet}(\gamma(a))=V_{\bullet}(\gamma(b-))}\subset H^{1/2}(\Gamma)&\text{ if }\Gamma=\partial\Omega,\\
\set{V_{\bullet}\in\mathcal{S}^p({\mathcal{K}}_{\bullet},\mathcal{W}_{\bullet})}{V_{\bullet}(\gamma(a))=0=V_{\bullet}(\gamma(b-))}\subset \H^{1/2}(\Gamma)&\text{ if }\Gamma\subsetneqq\partial\Omega,
\end{cases}
\end{align}
and for the weakly-singular case
\begin{equation}\label{eq:weaksing X0}
\YY_{\bullet}:=\mathcal{S}^{p-1}({\mathcal{K}}_{\bullet})\subset \H^{-1/2}(\Gamma).
\end{equation}
Note that, in contrast to the hypersingular case, we only allow for non-rational splines in the weakly-singular case.
We exploit this restriction in Lemma~\ref{lem:Y*=X'} below, which states that $\partial_\Gamma\XX_\bullet=\YY_\bullet$.
For rational splines, this assertion is in general false. 
We abbreviate 
\begin{align}
\overline{R}_{\bullet,i,p}:=\begin{cases}
{R}_{\bullet,1-p,p}+{R}_{\bullet,N_\bullet-p,p}\quad&\text{for }i=1-p,\\
{R}_{\bullet,i,p} \quad&\text{for }i\neq 1-p.
\end{cases}
\end{align}
We define $\overline{B}_{\bullet,i,p}$ analogously.
Further, we set 
\begin{align}
o:= 
\begin{cases}
0\quad&\text{if }
\Gamma= \partial\Omega,\\
1\quad&\text{if }
\Gamma\subsetneqq \partial\Omega.
\end{cases}
\end{align}
Lemma \ref{lem:properties for B-splines}  \eqref{item:spline basis} and  \eqref{item:interpolatoric}  show that 
\begin{align}\label{eq:hypsing basis}
\XX_{\bullet}={\rm span}{\set{\overline R_{\bullet,i,p}}{i=1-p+o,\dots,N_{\bullet}-p-1}},
\end{align}
as well as 
\begin{align}\label{eq:weaksing basis}
\YY_{\bullet}=\linhull\set{ B_{\bullet,i,p-1}}{i=1-(p-1),\dots,N_{\bullet}-1-(p-1)}.
\end{align}
In both cases, the corresponding sets  form a basis of $\XX_{\bullet}$ resp. $\YY_{\bullet}$.
 Note that the spaces $\XX_{\bullet}$ and $\YY_{\bullet}$  satisfy  that
$\XX_{\bullet}\subset \H^1(\Gamma)$ and $\YY_{\bullet}\subset L^2(\Gamma)$.
Lemma~\ref{lem:properties for B-splines} \eqref{item:spline basis} implies nestedness
\begin{align}
\XX_{\bullet}\subseteq \XX_{\circ}\text{ and }\YY_{\bullet}\subseteq\YY_{\circ} \quad\text{for all }\KK_{\bullet},\KK_{\circ}\in\mathbb{K}\text{ with } \KK_{\circ}\in\refine(\KK_{\bullet}).
\end{align}


\section{Auxiliary results for hypersingular case}
\label{section:aux}
\subsection{Scott-Zhang-type projection}
\label{section:scott} 
\noindent
In this section, we  use the Scott-Zhang-type operator from \cite[Section~4.3]{hypiga}.
Let $\KK_{\bullet}\in\K$.
In \cite[Section 2.1.5]{overview}, it is shown that, for $i\in \{1-p,\dots,N_{\bullet}-p\}$, there exist dual basis {functions $\widehat B_{\bullet,i,p}^*\in L^2(a,b)$ such that
\begin{align}
&\supp (\widehat B_{\bullet,i,p}^*)=\supp( \widehat B_{\bullet,i,p})=[t_{\bullet,i-1},t_{\bullet,i+p}],\\
&\int_a^b  \widehat B_{\bullet,i,p}^*(t) \widehat B_{\bullet,j,p}(t) dt=\delta_{ij}=\begin{cases} 1,\quad \text{if }i=j,\\ 0,\quad \text{else,}\end{cases}
\end{align}
and
\begin{align}\label{eq:dual inequality}
\norm{\widehat B_{\bullet,i,p}^*}{L^2(a,b)}\le 9^p(2p+3) {|\supp( \widehat B_{\bullet,i,p})|^{-1/2}}.
\end{align}
Each dual basis function depends only on the knots $t_{\bullet,i-1},\dots,t_{\bullet,i+p}$. 
Therefore, we also write
\begin{align}\label{eq:dual basis knots}
\widehat B_{\bullet,i,p}^*=\widehat B^*(\cdot | t_{\bullet,i-1},\dots,t_{\bullet,i+p}).
\end{align}
With the denominator $\widehat w$ from \eqref{eq:w},  define  
\begin{align}\label{eq:ripstar}
\widehat R_{\bullet,i,p}^*:=\widehat B_{\bullet,i,p}^*\widehat  w/w_{\bullet,i}.
\end{align}
This immediately proves  that
\begin{align}
\label{eq:duality}&\int_a^b \widehat  R_{\bullet,i,p}^*(t) \widehat R_{\bullet,j,p}(t) dt=\delta_{ij}
\end{align}
and
\begin{align}\label{eq:dual inequality2}
\norm{\widehat R_{\bullet,i,p}^*}{L^2(a,b)}\lesssim 9^p(2p+3) {|\supp (\widehat R_{\bullet,i,p})|^{-1/2}},
\end{align}
where the hidden constant depends only on $w_{\min}$ and  $w_{\max}$. 
We define the Scott-Zhang-type operator $J_{\bullet}:L^2(\Gamma)\to\XX_{\bullet}$
\begin{align}\label{eq:scotty}
J_{\bullet} v:=
\sum_{i=1-p+o}^{N_{\bullet}-p-1}\alpha_{\bullet,i}(v)\overline R_{\bullet,i,p} \text{ with }\alpha_{\bullet,i}(v):=\begin{cases}
 \int_a^b \frac{\widehat  R_{\bullet,1-p,p}^*+\widehat  R_{\bullet,N_\bullet-p,p}^*}{2} \,v\circ\gamma \,dt \,&\text{if }i=1-p,\\
  \int_a^b \widehat  R_{\bullet,i,p}^*\,v\circ\gamma \,dt \quad&\text{if }i\neq 1-p.
\end{cases}
\end{align}

A similar operator, namely $I_{\bullet} := \sum_{i=1-p}^{N_{\bullet}-p} \Big(\int_a^b  \widehat R_{\bullet,i,p}^*\, v \circ\gamma \,dt \Big){R}_{\bullet,i,p}$, has been analyzed in \cite[Section 3.1.2]{overview}.
However, $I_{\bullet}$ is not applicable here for two reasons:
First, for $\Gamma=\partial\Omega$, it does not guarantee that  $I_{\bullet} v$ is continuous at $\gamma(a)=\gamma(b)$.
Second, for $\Gamma \subsetneqq \partial\Omega$, it does not guarantee that $I_{\bullet} v(\gamma(a))=0=I_{\bullet}v(\gamma(b))$.

\begin{lemma}\label{lem:Scott difference}
Let  $\KK_{\bullet},\KK_{\circ}\in\K$ with $\KK_{\circ}\in\refine(\KK_{\bullet})$. 
Then, each $v\in L^2(\Gamma)$ satisfies  that
\begin{align}\label{eq:diffJ}
(J_{\circ} - J_{\bullet}) v \in {\rm{span}} \set{\overline R_{\circ,i,p}}{i\in\{1-p+o,\dots,N_{\circ}-p-1\}, \supp  (\overline R_{\circ,i,p})\cap \widetilde \NN_{\circ\setminus\bullet}\neq \emptyset},
\end{align}
where 
\begin{align}
\widetilde \NN_{\circ\setminus\bullet}:=\NN_{\circ}\setminus\NN_{\bullet} \cup \set{z\in\NN_{\circ}\cap\NN_{\bullet}}{\#_{\circ}z>\#_{\bullet} z}.
\end{align}
\end{lemma}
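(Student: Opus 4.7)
The plan is to expand both $J_\circ v$ and $J_\bullet v$ in the fine basis $\{\overline R_{\circ,k,p}\}_{k\in\{1-p+o,\dots,N_\circ-p-1\}}$ of $\XX_\circ$ (using $\XX_\bullet\subseteq\XX_\circ$) and to show that, for every index $j$ with $\supp(\overline R_{\circ,j,p})\cap\widetilde\NN_{\circ\setminus\bullet}=\emptyset$, the coefficient of $\overline R_{\circ,j,p}$ in $J_\circ v$ agrees with the corresponding coefficient in $J_\bullet v$. The crucial geometric observation is that, under this disjointness hypothesis, the $p+2$ knots $t_{\circ,j-1},\dots,t_{\circ,j+p}$ defining $\widehat R_{\circ,j,p}$ appear, with matching multiplicities, as a contiguous subblock of $\widehat\KK_\bullet$: there exists $i^*$ with $t_{\bullet,i^*+s}=t_{\circ,j+s}$ for $s=-1,\dots,p$. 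Moreover, \eqref{eq:w} together with the knot-insertion formula of Lemma~\ref{lem:properties for B-splines}\eqref{item:knot insertion} guarantees that no weight is altered in a region where no knot is inserted, so also $w_{\bullet,i^*}=w_{\circ,j}$. Combining Lemma~\ref{lem:properties for B-splines}\eqref{item:B-splines determined} with the analogous local dependence \eqref{eq:dual basis knots} of the dual B-splines and the relation~\eqref{eq:ripstar} for the NURBS duals, I obtain the key identities
\begin{equation*}
\widehat R_{\circ,j,p}=\widehat R_{\bullet,i^*,p}\quad\text{and}\quad\widehat R^*_{\circ,j,p}=\widehat R^*_{\bullet,i^*,p}.
\end{equation*}

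For an interior index $j$, I would then extract the coefficient of $\overline R_{\circ,j,p}$ in the fine expansion of any element of $\XX_\circ$ by testing against $\widehat R^*_{\circ,j,p}$ in the parameter domain, which works by the duality~\eqref{eq:duality} (and, in the closed case $\Gamma=\partial\Omega$, by the fact that $\overline R_{\circ,1-p,p}\circ\gamma=\widehat R_{\circ,1-p,p}+\widehat R_{\circ,N_\circ-p,p}$ on $[a,b)$, so $\widehat R^*_{\circ,j,p}$ annihilates this wrapping contribution whenever $j$ is interior). Applied to $J_\circ v$, this yields $\alpha_{\circ,j}(v)$. For $J_\bullet v=\sum_i\alpha_{\bullet,i}(v)\overline R_{\bullet,i,p}$, I expand each $\widehat R_{\bullet,i,p}$ in the finer basis via iterated knot insertion (Lemma~\ref{lem:properties for B-splines}\eqref{item:knot insertion}); the coefficient of $\widehat R_{\circ,j,p}$ in that expansion is $\int_a^b\widehat R^*_{\circ,j,p}\widehat R_{\bullet,i,p}\,dt$. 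Replacing $\widehat R^*_{\circ,j,p}$ by $\widehat R^*_{\bullet,i^*,p}$ and invoking the duality~\eqref{eq:duality} in the \emph{coarse} mesh yields $\delta_{i,i^*}$, so testing $J_\bullet v$ against $\widehat R^*_{\circ,j,p}$ produces $\alpha_{\bullet,i^*}(v)$. But by $\widehat R^*_{\circ,j,p}=\widehat R^*_{\bullet,i^*,p}$ this equals $\alpha_{\circ,j}(v)$, and the two coefficients coincide. The wrapping index $j=1-p$ in the closed case is treated analogously, using that the hypothesis then forces both endpoint blocks at $a$ and at $b$ to be unchanged and extracting the coefficient via the averaged dual functional $\tfrac12(\widehat R^*_{\circ,1-p,p}+\widehat R^*_{\circ,N_\circ-p,p})$ appearing in~\eqref{eq:scotty}.

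I expect the main technical obstacle to be verifying rigorously that "no node of $\widetilde\NN_{\circ\setminus\bullet}$ lies in $\supp(\overline R_{\circ,j,p})$" really does imply that the full $(p+2)$-block of defining knots matches a contiguous block of $\widehat\KK_\bullet$ with identical multiplicities \emph{and} with identical associated weights. The tailored definition $\widetilde\NN_{\circ\setminus\bullet}=(\NN_\circ\setminus\NN_\bullet)\cup\{z\in\NN_\circ\cap\NN_\bullet:\#_\circ z>\#_\bullet z\}$ is exactly what is needed -- a newly inserted knot in the interior of the support would create a new node, while a multiplicity increase at any of the defining knots would place a node of $\widetilde\NN_{\circ\setminus\bullet}$ into the (closed) support -- but some care is required at the identification $\gamma(a)=\gamma(b)$ in the closed case in order to translate between parameter-domain knots and nodes on $\Gamma$.
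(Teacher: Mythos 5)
Your proof is correct, but it takes a genuinely different route from the paper's. The paper first treats the insertion of a single knot $t'$ (Step~1), explicitly identifying which basis functions are altered and showing the distant ones cancel, and then telescopes $(J_\circ-J_\bullet)v=\sum_k(J_{(k)}-J_{(k-1)})v$ over a chain of single-knot insertions (Step~2), with a brief remark that the intermediate knot vectors need not satisfy~\eqref{eq:kappamax}. You instead expand $J_\circ v$ and $J_\bullet v$ once and for all in the fine basis and match coefficients: for each $j$ with $\supp(\overline R_{\circ,j,p})\cap\widetilde\NN_{\circ\setminus\bullet}=\emptyset$ you argue that the $(p+2)$-block of defining knots and the weight are unchanged, so $\widehat R_{\circ,j,p}=\widehat R_{\bullet,i^*,p}$ and, crucially, $\widehat R^*_{\circ,j,p}=\widehat R^*_{\bullet,i^*,p}$ by the locality~\eqref{eq:dual basis knots}; duality then gives $\alpha_{\circ,j}(v)=\alpha_{\bullet,i^*}(v)$ and hence zero coefficient in the difference. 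This is more direct, dispenses with intermediate meshes, and makes the role of $\widetilde\NN_{\circ\setminus\bullet}$ completely transparent.

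The obstacle you flag is real but resolvable, and two small points deserve to be made explicit. First, the block match: since every node in the \emph{closed} support, including the two endpoints $\gamma(t_{\circ,j-1})$, $\gamma(t_{\circ,j+p})$, has unchanged multiplicity and no new node sits strictly inside, the multisets of knots of $\widehat\KK_\circ$ and $\widehat\KK_\bullet$ in $[t_{\circ,j-1},t_{\circ,j+p}]$ coincide, which is what lets you locate the window $t_{\bullet,i^*-1},\dots,t_{\bullet,i^*+p}$; for the weight one can avoid tracking index shifts in~Lemma~\ref{lem:properties for B-splines}\eqref{item:knot insertion} altogether by noting $w_{\bullet,i^*}=\int_a^b\widehat B^*_{\bullet,i^*,p}\widehat w\,dt=\int_a^b\widehat B^*_{\circ,j,p}\widehat w\,dt=w_{\circ,j}$. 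Second, when you invoke coarse duality to get $\delta_{i,i^*}$ you implicitly use that an interior fine index $j\neq 1-p$ cannot map to $i^*\in\{1-p,N_\bullet-p\}$; this does hold, because the fixed endpoint multiplicity $p+1$ at $a$ and at $b$ forces $\widehat R_{\circ,j,p}=\widehat R_{\bullet,1-p,p}$ (resp.\ $=\widehat R_{\bullet,N_\bullet-p,p}$) only when $j=1-p$ (resp.\ $j=N_\circ-p$), so the wrapping contribution from $\overline R_{\bullet,1-p,p}\circ\gamma=\widehat R_{\bullet,1-p,p}+\widehat R_{\bullet,N_\bullet-p,p}$ indeed drops out; stating this would close the argument.
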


\begin{proof} 
We only prove the lemma for closed  $\Gamma=\partial\Omega$. 
For open $\Gamma\subsetneqq \partial\Omega$, the proof even simplifies. 
We split the proof into two steps.

\noindent\textbf{Step 1:}
We consider  the   case where $\widehat\KK_\circ$ is obtained from $\widehat\KK_\bullet$  by insertion of a single knot $t'\in [a,b]$ in the parameter domain.
Let  $b\neq t'\in(t_{\bullet,\ell-1},t_{\bullet,\ell}]$ with some $\ell\in \{1,\dots,N_{\bullet}-p\}$.
Note that  $N_{\circ}=N_{\bullet}+1$.
It holds that
\begin{align}\label{eqpr:Scott difference}
(J_{\circ}-J_{\bullet})v&=\alpha_{\circ,1-p}(v)  \overline R_{\circ,1-p,p}-\alpha_{\bullet,1-p}(v) \overline R_{\bullet,1-p,p}+\sum_{i=2-p}^{N_{\bullet}-p}\alpha_{\circ,i}(v)  R_{\circ,i,p}-\sum_{i=2-p}^{N_{\bullet}-p-1}\alpha_{\bullet,i}(v)  R_{\bullet,i,p}.
\end{align}
Remark~\ref{rem:B lincomb} and the choice $(a_{\bullet,j})_{j\in\Z}:=(w_{\bullet,j})_{j\in\Z}$ in Lemma \ref{lem:properties for B-splines} \eqref{item:knot insertion}  show the following: 
 first, for $1-p\le i\le\ell-p+\#_{\circ} t'-2$, it holds that $\widehat B_{\bullet,i,p}=\widehat B_{\circ,i,p}$ and $w_{\bullet,i}=w_{\circ,i}$, whence $ R_{\bullet,i,p}= R_{\circ,i,p}$; second,  for $\ell+1\le i\le N_{\bullet}-p$, it holds  that 
$\widehat B_{\bullet,i,p}=\widehat B_{\circ,i+1,p}$ and $w_{\bullet,i}=w_{\circ,i+1}$,  whence $ R_{\bullet,i,p}= R_{\circ,i+1,p}$.
Moreover, for $1-p\le i\le \ell-p+\#_{\circ} t'-2$, it holds  that $\widehat R_{\bullet,i,p}^*=\widehat R_{\circ,i,p}^*$
and for $\ell+1\le i\le  N_{\bullet}-p$  that $\widehat R_{\bullet,i,p}^*=\widehat R_{\circ,i+1,p}^*$
Hence, \eqref{eqpr:Scott difference} simplifies to
\begin{align}\label{eqpr:Scott difference 1}
&(J_{\circ}-J_{\bullet})v=\alpha_{\circ,1-p}(v)  \overline R_{\circ,1-p,p}-\alpha_{\bullet,1-p}(v) \overline R_{\bullet,1-p,p}\\\
&+\sum_{i=\max(2-p,\ell-p+\#_{\circ} t'-1)}^{\ell+1}\alpha_{\circ,i}(v) R_{\circ,i,p}-\sum_{i=\max(2-p,\ell-p+\#_{\circ} t'-1)}^{\ell}\alpha_{\bullet,i}(v) R_{\bullet,i,p}.\notag
\end{align}
Remark~\ref{rem:B lincomb} and  Lemma~\ref{lem:properties for B-splines} \eqref{item:B-splines local} imply  that
\begin{align}\label{eqpr:new hats}
\begin{split}
&\set{R_{\circ,i,p}}{i=\max(2-p,\ell-p+\#_{\circ}t'-1)\dots,\ell+1}\\
&\quad\cup \set{R_{\bullet,i,p}}{i=\max(2-p,\ell-p+\#_{\circ} t'-1),\dots,\ell}\\
&\subseteq{\rm span}\set{ \overline R_{\circ,i,p}}{i=\max(2-p,\ell-p+\#_{\circ} t'-1),\dots,\ell+1}\\
&\subseteq{\rm span}\set{ \overline R_{\circ,i,p}}{
\gamma(t')\in \supp( \overline R_{\circ,i,p})}.
\end{split}
\end{align}
We have already seen  that $ R_{\bullet,N_{\bullet}-p,p}= R_{\circ,N_{\bullet}-p+1,p}$ and $ R_{\bullet,N_{\bullet}-p,p}^*= R_{\circ,N_{\bullet}-p+1,p}^*$ if $N_{\bullet}-p\ge \ell+1$, and that $\widehat R_{\bullet,1-p,p}=\widehat R_{\circ,1-p,p}$ and $\widehat R_{\bullet,1-p,p}^*=\widehat R_{\circ,1-p,p}^*$ if $3\le \ell+\#_\circ t'$.
This shows that the first  summands in \eqref{eqpr:Scott difference 1} cancel each other if $N_{\bullet}-p\ge \ell+1$ and $3\le \ell+\#_\circ t'$.
Otherwise there holds  that $\ell=1$ or $\ell=N_\bullet-p$ and the functions $\overline R_{\circ,1-p,p}, R_{\circ,2-p,p}$ and $R_{\circ,N_\bullet-p,p}$ are in the last set of \eqref{eqpr:new hats}.
Since $\overline R_{\bullet,1-p,p}$ is a linear combination of these functions, we conclude~that
\begin{align*}
(J_{\circ} - J_{\bullet}) v \in {\rm{span}} \set{\overline R_{\circ,i,p}}{\gamma(t')\in\supp (\overline R_{\circ,i,p})}.
\end{align*}

\noindent \textbf{Step 2:} Let $\KK_{\circ}\in\K$  be an arbitrary refinement of $\KK_\bullet$ and let $\KK_{\circ}=\KK_{(M)},$  $\KK_{(M-1)},$ $\dots,$ $\KK_{(1)},$ $\KK_{(0)}=\KK_{\bullet}$ be a sequence of knot vectors such that each $ \KK_{(k)}$ is obtained by insertion of one single knot $\gamma(t_{(k)})$ in $\KK_{(k-1)}$.
Note that these meshes do not necessarily belong to $\K$, as the $\widehat \kappa$-mesh property~\eqref{eq:kappamax} can be violated.
However, the corresponding Scott-Zhang operator $J_{(k)}$ for $\KK_{(k)}$ can be defined just as above and Step 1 holds analogously.
There holds that
\begin{align}\label{eqpr:Scott difference 2}
(J_{\circ}-J_{\bullet})v=\sum_{k=1}^M (J_{(k)}-J_{(k-1)})v.
\end{align}
This and Step 1 imply  that
\begin{align}\label{eq:sum of spans}
(J_{\circ}-J_{\bullet})v\in \sum_{k=1}^M  {\rm{span}} \set{\overline R_{(k),i,p}}{(\gamma(t_{(k)})\in \supp (\overline R_{(k),i,p})}.
\end{align}
Remark~\ref{rem:B lincomb} shows that any basis function $\widehat B_{(k),i,p}$ with $k<M$ is the linear combination of $\widehat B_{(k+1),i,p}$ and $\widehat B_{(k+1),i+1,p}$. 
Moreover,  Lemma~\ref{lem:properties for B-splines} \eqref{item:B-splines local} shows that $\supp (\widehat B_{(k+1),i,p})\cup \supp (\widehat B_{(k+1),{i+1},p})\subseteq\supp (\widehat B_{(k),i,p})$.
We conclude  that
\begin{align*}
\overline R_{(k),i,p}\in{\rm span}\set{\overline R_{(k+1),j,p}}{\supp (\overline R_{(k+1),j,p})\subseteq \supp( \overline R_{(k),i,p})}.
\end{align*}
Together with \eqref{eq:sum of spans}, this shows  that
\begin{align*}
(J_{\circ}-J_{\bullet})v&\in \sum_{k=1}^M{\rm span}\set{\overline R_{(M),i,p}}{\gamma(t_{(k)})\in \supp(\overline R_{(M),i,p}})\\
&= {\rm{span}} \set{\overline R_{\circ,i,p}}{\supp( \overline R_{\circ,i,p})\cap\widetilde\NN_{\circ\setminus\bullet}\neq\emptyset},
\end{align*}
and concludes the proof.
\end{proof}

The following proposition  is taken from \cite[Proposition~4.3]{hypiga}.
\begin{proposition}\label{lem:Scott properties}
For $\KK_{\bullet}\in\K$, the corresponding Scott-Zhang operator $J_{\bullet}$ satisfies the following properties:
\begin{enumerate}[{\rm(i)}]
\item Local projection property: For  all $v\in L^2(\Gamma)$ and all $T\in\TT_{\bullet}$ it holds  that
\begin{align}
(J_{\bullet} v)|_T= v|_T \quad\text{if }v|_{\omega_{\bullet}^p(T)}\in \XX_{\bullet}|_{\omega_{\bullet}^p(T)}:=\set{V_{\bullet}|_{\omega_{\bullet}^p(T)}}{V_{\bullet}\in\XX_{\bullet}}.
\end{align}
\item Local $L^2$-stability: For all $v\in L^2(\Gamma)$ and all $T\in \TT_{\bullet}$, it holds  that
\begin{align}
\norm{J_{\bullet} v}{L^2(T)}\le \Cscott \norm{v}{L^2(\omega_{\bullet}^{p}(T))}.
\end{align}
\item Local $\H^1$-stability: For all $v\in \H^1(\Gamma)$ and all $T\in \TT_{\bullet}$, it holds  that
\begin{align}
|J_{\bullet} v|_{H^1(T)}\le \Cscott |v|_{H^1(\omega_{\bullet}^{p}(T))}.
\end{align}
\item Local approximation property: For all $v\in \H^1(\Gamma)$ and all $T\in\TT_{\bullet}$, it holds  that
\begin{align}\label{eq:local approx}
\norm{h_{\bullet}^{-1}(1-J_{\bullet}) v}{L^2(T)}\le \Cscott |v|_{H^1(\omega_{\bullet}^{p}(T))}.
\end{align}
\end{enumerate}
The constant $\Cscott>0$ depends only on ${\widehat \kappa}_{\max}, p, w_{\min}, w_{\max}$, and $\gamma$.\hfill$\square$
\end{proposition}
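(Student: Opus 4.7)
The plan is to establish (i) directly from the biorthogonality relation \eqref{eq:duality}, to derive (ii) from the dual norm bound \eqref{eq:dual inequality2}, and to obtain (iii)--(iv) by the standard Scott-Zhang trick combining (i)--(ii) with local polynomial approximation and an inverse inequality.

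For the local projection property (i), the key point provided by Lemma~\ref{lem:properties for B-splines}~\eqref{item:B-splines local} is that every basis function $\overline R_{\bullet,i,p}$ whose support intersects $T\in\TT_{\bullet}$ already has $\supp(\overline R_{\bullet,i,p})\subseteq \omega_{\bullet}^p(T)$ (the support spans at most $p+1$ elements). Hence, if $v|_{\omega_{\bullet}^p(T)}=V_{\bullet}|_{\omega_{\bullet}^p(T)}$ with $V_{\bullet}=\sum_j c_j \overline R_{\bullet,j,p}\in\XX_{\bullet}$, the biorthogonality \eqref{eq:duality} yields $\alpha_{\bullet,i}(v)=c_i$ for all such relevant $i$, so $(J_{\bullet}v)|_T=V_{\bullet}|_T=v|_T$. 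At the endpoint index $i=1-p$ in the closed case, the averaged dual functional still extracts the right coefficient because elements of $\XX_{\bullet}$ assign equal weights to ${R}_{\bullet,1-p,p}$ and ${R}_{\bullet,N_{\bullet}-p,p}$ by the continuity constraint at $\gamma(a)=\gamma(b)$.

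For (ii), I expand $J_{\bullet}v|_T$ as a sum over the $O(p)$ indices $i$ with $\supp(\overline R_{\bullet,i,p})\cap T\neq\emptyset$. Each $|\alpha_{\bullet,i}(v)|$ is controlled by Cauchy-Schwarz and \eqref{eq:dual inequality2}, giving $|\alpha_{\bullet,i}(v)|\lesssim |\supp(\widehat R_{\bullet,i,p})|^{-1/2}\,\|v\circ\gamma\|_{L^2(\supp(\widehat R_{\bullet,i,p}))}$. Combining with the uniform bound $\|\overline R_{\bullet,i,p}\|_{L^\infty(\Gamma)}\lesssim 1$ (from Lemma~\ref{lem:properties for B-splines}~\eqref{item:B-splines partition} and \eqref{eq:weights}), the bi-Lipschitz property \eqref{eq:bi-Lipschitz} to pass between arclength and parameter length, and the mesh-ratio bound \eqref{eq:kappamax} to declare that all relevant supports are comparable to $h_{\bullet,T}$, we obtain (ii).

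For (iii)--(iv), I use the standard Scott-Zhang trick: for each $T$ pick a polynomial $c_T$ on $\omega_{\bullet}^p(T)$ with the Bramble-Hilbert bound $\|v-c_T\|_{L^2(\omega_{\bullet}^p(T))}\lesssim h_{\bullet,T}|v|_{H^1(\omega_{\bullet}^p(T))}$ (for boundary elements in the open case, $c_T$ is chosen to respect the homogeneous endpoint condition so that $c_T|_{\omega_{\bullet}^p(T)}\in\XX_{\bullet}|_{\omega_{\bullet}^p(T)}$). Then (i) gives $J_{\bullet}c_T|_T=c_T|_T$, so $(1-J_{\bullet})v|_T=(1-J_{\bullet})(v-c_T)|_T$, and applying (ii) yields $\|(1-J_{\bullet})v\|_{L^2(T)}\lesssim \|v-c_T\|_{L^2(\omega_{\bullet}^p(T))}\lesssim h_{\bullet,T}|v|_{H^1(\omega_{\bullet}^p(T))}$, which is (iv). For (iii), an inverse inequality for NURBS on $T$ gives $|J_{\bullet}v|_{H^1(T)}=|J_{\bullet}(v-c_T)|_{H^1(T)}\lesssim h_{\bullet,T}^{-1}\|J_{\bullet}(v-c_T)\|_{L^2(T)}$, and another application of (ii) together with Poincaré closes the argument. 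The main obstacle is the careful choice of $c_T$ at boundary elements in the open case (so that $J_{\bullet}c_T=c_T$ locally despite the zero trace constraint), together with the verification of the inverse inequality for the rational NURBS space, which follows from Lemma~\ref{lem:basis scaling} and \eqref{eq:weights}.
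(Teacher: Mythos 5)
The paper itself gives no proof of this proposition: it is quoted verbatim from \cite[Proposition~4.3]{hypiga}, so there is no in-paper argument to compare against. Your sketch follows the standard Scott--Zhang route --- biorthogonality \eqref{eq:duality} yields the local projection property (including the correct treatment of the averaged endpoint functional in the closed case), the dual bound \eqref{eq:dual inequality2} together with $\|\widehat R_{\bullet,i,p}\|_{L^\infty}\le w_{\max}/w_{\min}$ yields local $L^2$-stability, and (iii)--(iv) follow by subtracting a locally representable function and combining (i), (ii), an element-wise inverse estimate and a Poincar\'e-type bound --- and this outline is sound and in essence the argument of the cited reference.

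Two steps need tightening. First, ``pick a polynomial $c_T$'': in the rational setting the only functions guaranteed to lie in $\XX_{\bullet}|_{\omega_{\bullet}^p(T)}$ are constants, via $\sum_i \widehat R_{\bullet,i,p}=1$ with the fixed denominator $\widehat w$; general polynomials are not locally contained in the NURBS space, so $c_T$ must be a constant. Moreover, for open $\Gamma\subsetneqq\partial\Omega$ and a patch $\omega_{\bullet}^p(T)$ containing an endpoint of $\Gamma$, not even nonzero constants are locally representable (every $V_\bullet\in\XX_\bullet$ vanishes there); you must then take $c_T=0$ and replace Poincar\'e by a Friedrichs inequality exploiting the vanishing trace of $v\in\H^1(\Gamma)$ at that endpoint --- this is the only place where $\H^1$ rather than $H^1$ enters, and your parenthetical remark should be made precise in exactly this way. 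Second, the element-wise inverse estimate $|V_\bullet|_{H^1(T)}\lesssim h_{\bullet,T}^{-1}\norm{V_\bullet}{L^2(T)}$ used for (iii) does not follow from Lemma~\ref{lem:basis scaling}, which is a lower bound on fractional seminorms of single basis functions; it requires a scaling and finite-dimensionality argument on $T$, using that discrete functions are of the form (polynomial of degree $\le p$)$/\widehat w$ composed with $\gamma^{-1}$ and that $w_{\min}\le\widehat w\le w_{\max}$ by \eqref{eq:weights}, or alternatively a localized version of Proposition~\ref{lem:inverseIneq}. With these repairs your argument is complete and yields constants depending only on $\widehat\kappa_{\max}$, $p$, $w_{\min}$, $w_{\max}$, and $\gamma$, as stated.
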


\subsection{Inverse inequalities}
In this section, we state some inverse estimates for NURBS from \cite[Section~4.3 and Appendix~C.1]{hypiga},  which are well-known for piecewise polynomials \cite{inverse2,hypsing3d}.
\begin{proposition}\label{lem:inverseIneq}
Let $\KK_{\bullet}\in\K$ and $0\le\sigma\le 1$.
Then, there hold the inverse inequalities
\begin{align}\label{eq:inverseIneq}
\norm{V_{\bullet}}{\H^\sigma(\Gamma)}\le \Cinv \norm{h_{\bullet}^{-\sigma}V_{\bullet}}{L^2(\Gamma)} \quad\text{for all } V_{\bullet}\in \XX_{\bullet},
\end{align}
and 
\begin{align}\label{eq:inverseIneq2}
\norm{h_{\bullet}^{1-\sigma}\partial_\Gamma{V_{\bullet}}}{L^2(\Gamma)}\le \Cinv \norm{V_{\bullet}}{\H^\sigma(\Gamma)} \quad\text{for all } V_{\bullet}\in \XX_{\bullet}.
\end{align}
The constant $\Cinv>0$ depends only on ${\widehat \kappa}_{\max}, p, w_{\min}, w_{\max}$, $\gamma$, and $\sigma$.\hfill$\square$
\end{proposition}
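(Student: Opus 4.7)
The plan is to establish the endpoint cases $\sigma\in\{0,1\}$ by elementwise scaling arguments and then obtain the full range $0<\sigma<1$ via real interpolation.

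At $\sigma=0$ the first inequality \eqref{eq:inverseIneq} is trivial. For \eqref{eq:inverseIneq2} I would argue locally: on each $T\in\TT_\bullet$, the restriction $V_\bullet|_T=\widehat V_\bullet\circ\gamma^{-1}$ is the ratio of a polynomial of degree at most $p$ and the fixed spline $\widehat w$, which by \eqref{eq:weights} is uniformly bounded in $[w_{\min},w_{\max}]$. Pulling back to a reference interval together with the bi-Lipschitz property \eqref{eq:bi-Lipschitz} of $\gamma_{\rm arc}$ and equivalence of norms on the finite-dimensional space of NURBS with weight $\widehat w$ yields $\|\partial_\Gamma V_\bullet\|_{L^2(T)}\lesssim h_{\bullet,T}^{-1}\|V_\bullet\|_{L^2(T)}$; squaring and summing gives \eqref{eq:inverseIneq2}. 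At $\sigma=1$, \eqref{eq:inverseIneq2} is immediate since $h_\bullet^0=1$. For \eqref{eq:inverseIneq} the same local bound gives $|V_\bullet|_{H^1(\Gamma)}^2\lesssim\|h_\bullet^{-1}V_\bullet\|_{L^2(\Gamma)}^2$, while the $L^2$-part of the $\H^1$-norm is controlled by $h_{\bullet,T}\le|\Gamma|$ in the closed case and by a Poincaré inequality (using the vanishing trace on $\partial\Gamma$) in the open case.

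For $0<\sigma<1$, one applies the $K$-method of interpolation between $L^2(\Gamma)$ and $\H^1(\Gamma)$. The principal obstacle is that the weighted norm $\|h_\bullet^{-\sigma}V_\bullet\|_{L^2(\Gamma)}$ is not directly the interpolation norm on strongly graded meshes: the $K$-functional produces the mesh-size only as a uniform factor. The standard remedy is a dyadic level decomposition $V_\bullet=\sum_k V_k$ with $V_k$ built from the NURBS basis functions whose support consists of elements of size $\simeq 2^{-k}$; on each such piece the underlying mesh is quasi-uniform, so the $\sigma=0$ and $\sigma=1$ bounds interpolate directly to $\|V_k\|_{H^\sigma(\Gamma)}\lesssim 2^{k\sigma}\|V_k\|_{L^2(\Gamma)}\simeq\|h_\bullet^{-\sigma}V_k\|_{L^2(\Gamma)}$. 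Reassembly through Cauchy--Schwarz, with the finite overlap of the decomposition controlled by the admissibility assumption $\widehat\kappa_\bullet\le\widehat\kappa_{\max}$, recovers \eqref{eq:inverseIneq}. The converse estimate \eqref{eq:inverseIneq2} follows by the dual interpolation identity $\H^\sigma(\Gamma)=[L^2(\Gamma),\H^1(\Gamma)]_\sigma$ applied to the endpoints $\sigma=0$ (already shown) and $\sigma=1$ (trivial). The technically delicate point is the overlap bookkeeping in the level decomposition, which is exactly where the admissibility of $\KK_\bullet\in\K$ is used; all remaining constants depend only on $p$, $w_{\min}$, $w_{\max}$, $\widehat\kappa_{\max}$, $\gamma$, and $\sigma$, as asserted.
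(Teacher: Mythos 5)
You should first note that the paper does not prove Proposition~\ref{lem:inverseIneq} at all: it imports the result from \cite[Section~4.3 and Appendix~C.1]{hypiga}, which in turn builds on the piecewise-polynomial inverse estimates of \cite{inverse2,hypsing3d}; so your proposal has to stand on its own. Your endpoint cases $\sigma\in\{0,1\}$ by elementwise scaling (dividing out the fixed denominator $\widehat w$, using \eqref{eq:weights}, \eqref{eq:bi-Lipschitz}, and a compactness argument as in Lemma~\ref{lem:basis scaling}) are sound. The genuine gap sits exactly at the point you yourself call ``delicate''. For \eqref{eq:inverseIneq} with $0<\sigma<1$ you split $V_\bullet=\sum_k V_k$ into size classes and obtain $\norm{V_k}{\H^\sigma(\Gamma)}\lesssim 2^{k\sigma}\norm{V_k}{L^2(\Gamma)}$, but the reassembly $\norm{\sum_k V_k}{\H^\sigma(\Gamma)}^2\lesssim\sum_k\norm{V_k}{\H^\sigma(\Gamma)}^2$ does not follow from ``Cauchy-Schwarz plus finite overlap'': the triangle inequality followed by Cauchy-Schwarz produces a factor equal to the number of active levels, which is of the order of $\log_2$ of the global (not local) mesh ratio and is therefore not controlled by $\widehat\kappa_{\max}$; and the pointwise finite overlap that admissibility does give you is of no direct use, because for fractional $\sigma$ the norm is nonlocal, so pieces supported on disjoint parts of $\Gamma$ still couple through the Sobolev-Slobodeckij double integral \eqref{eq:ss seminorm}. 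What is actually required is a strengthened Cauchy-Schwarz / almost-orthogonality estimate between levels with geometric decay in $|k-k'|$ (exploiting that regions of different size classes are separated by a distance comparable to the coarser mesh width), or alternatively a Faermann-type localization of the fractional seminorm into overlapping patches combined with local inverse estimates. This is precisely the technical core of the proofs you would be replacing, and it is missing from your argument.

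The treatment of \eqref{eq:inverseIneq2} is also too quick: the weight $h_\bullet^{1-\sigma}$ depends on $\sigma$, so ``the dual interpolation identity'' cannot be applied verbatim; one would have to interpolate the fixed operator $\partial_\Gamma|_{\XX_\bullet}$ between mesh-weighted target spaces and, on the source side, show that the $K$-functional of $V_\bullet$ for the couple $(L^2(\Gamma),\H^1(\Gamma))$ can be realized by decompositions that a simultaneously $L^2$- and $\H^1$-stable, local quasi-interpolant (such as $J_\bullet$ from Proposition~\ref{lem:Scott properties}) maps back into $\XX_\bullet$ locally, so that the elementwise weight is recovered; none of this is spelled out. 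A cleaner route for \eqref{eq:inverseIneq2} in the range $0<\sigma<1$ is purely elementwise: affine scaling to the reference interval and equivalence of the two seminorms $\norm{\partial_\Gamma(\cdot)}{L^2}$ and $|\cdot|_{H^\sigma}$ (both vanishing exactly on constants) on the finite-dimensional space of transformed NURBS with denominator $\widehat w$ give $h_{\bullet,T}^{2(1-\sigma)}\norm{\partial_\Gamma V_\bullet}{L^2(T)}^2\lesssim |V_\bullet|_{H^\sigma(T)}^2$, and summing over elements with the superadditivity of the Slobodeckij seminorm over disjoint elements yields \eqref{eq:inverseIneq2}, since $|V_\bullet|_{H^\sigma(\Gamma)}\lesssim\norm{V_\bullet}{\H^\sigma(\Gamma)}$. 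No comparable elementwise shortcut exists for \eqref{eq:inverseIneq}, which is the half your proposal, as written, does not establish.
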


\subsection{Uniform meshes}
\label{section:multilevel} 
We consider a sequence  $\KK_{{\rm uni}(m)}\in\K$ of uniform knot vectors:
Let $\KK_{{\rm uni}(0)} :=\KK_0$ and let $\KK_{{\rm uni}(m+1)}$ be obtained from
$\KK_{{\rm uni}(m)}$ by uniform refinement, i.e., all elements of $\TT_{{\rm uni}(m)}$ are
bisected {in the parameter domain} into son elements with half length, where each new knot has multiplicity one. 
Define $\overline h_{{\rm uni}(0)} := \max_{T\in\TT_0} |\gamma^{-1}(T)|$ as well as 
\begin{align}\label{def:hat-hell}
  \overline h_{{\rm uni}(m)} := 2^{-m} \overline h_{{\rm uni}(0)} \quad\text{for each }m \geq 1.
\end{align}%
Note that $\overline h_{{\rm uni}(m)}$ is equivalent to the usual local mesh-size function
on $\TT_{{\rm uni}(m)}$, i.e., $\overline h_{{\rm uni}(m)} \simeq |T|$ for all
$T\in\TT_{{\rm uni}(m)}$ and all $m\ge0$, {where the hidden constants depend only on $\TT_0$ and $\gamma$}.
Moreover, let $\XX_{{\rm uni}(m)}$ denote the associated discrete space with corresponding $L^2$-orthogonal projection $\Pi_{{\rm uni}(m)} : L^2(\Gamma)\to \XX_{{\rm uni}(m)}$.  Note that the discrete spaces $\XX_{{\rm uni}(m)}$ are nested, i.e., $\XX_{{\rm uni}(m)}\subseteq\XX_{{\rm uni}(m+1)}$ for all $m\geq 0$. 

The next result follows by the approximation property of   Proposition~\ref{lem:Scott properties} {\rm(iv)} and the inverse inequality \eqref{eq:inverseIneq} of  Proposition~\ref{lem:inverseIneq} in combination with \cite{bornemann}.

\begin{lemma}\label{lem:normEquiv}
  {Let $0< \sigma< 1$.} Then, 
  \begin{align}\label{eq:normEquiv}
 \sum_{m=0}^\infty \overline h_{{\rm uni}(m)}^{-2\sigma} \norm{(1 -\Pi_{{\rm uni}(m)})v}{L^2(\Gamma)}^2\le \Cnorm \norm{v}{\H^\sigma(\Gamma)}^2\quad \text{for all } v\in \H^\sigma(\Gamma),
\end{align}
where the constant $\Cnorm>0$ depends only on $\TT_0$,  $\widehat \kappa_{\max}, p, w_{\min}, w_{\max}$, $\gamma$, and $\sigma$.
\end{lemma}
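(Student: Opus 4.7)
The plan is to deduce this estimate from the Bornemann--Yserentant K-functional framework, using the approximation property of the Scott--Zhang operator (Proposition~\ref{lem:Scott properties}(iv)) as the main ingredient and the geometric decay $\overline h_{{\rm uni}(m)} = 2^{-m}\overline h_{{\rm uni}(0)}$ from~\eqref{def:hat-hell} to supply the multilevel structure. Throughout, I will work with the Peetre K-functional
\[
K(t,v)^2 := \inf_{w\in\H^1(\Gamma)}\bigl(\|v-w\|^2_{L^2(\Gamma)} + t^2\,\|w\|^2_{H^1(\Gamma)}\bigr),
\]
which, by the very definition of $\H^\sigma(\Gamma) = [L^2(\Gamma),\H^1(\Gamma)]_\sigma$ in Section~\ref{section:sobolev}, satisfies $\|v\|^2_{\H^\sigma(\Gamma)} \simeq \int_0^\infty t^{-2\sigma}K(t,v)^2\,\tfrac{dt}{t}$.

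First, I bound each summand by $K(\overline h_{{\rm uni}(m)},v)^2$. Since $\Pi_{{\rm uni}(m)}$ is the $L^2$-orthogonal projection onto $\XX_{{\rm uni}(m)}$, it is best-approximating in $L^2(\Gamma)$. For any $w\in\H^1(\Gamma)$, the Scott--Zhang image $J_{{\rm uni}(m)} w$ lies in $\XX_{{\rm uni}(m)}$ (the trace-preserving construction~\eqref{eq:scotty} is essential in the open boundary case), hence
\[
\|(1-\Pi_{{\rm uni}(m)})v\|_{L^2(\Gamma)} \le \|v-J_{{\rm uni}(m)}w\|_{L^2(\Gamma)} \le \|v-w\|_{L^2(\Gamma)} + \|(1-J_{{\rm uni}(m)})w\|_{L^2(\Gamma)}.
\]
Summing the local estimate~\eqref{eq:local approx} over $T\in\TT_{{\rm uni}(m)}$ and using the quasi-uniformity $\overline h_{{\rm uni}(m)}\simeq h_{{\rm uni}(m)}|_T$ bounds the second term by $\overline h_{{\rm uni}(m)}\,\|w\|_{H^1(\Gamma)}$, up to a constant independent of $m$. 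Taking the infimum over $w\in\H^1(\Gamma)$ gives $\|(1-\Pi_{{\rm uni}(m)})v\|^2_{L^2(\Gamma)} \lesssim K(\overline h_{{\rm uni}(m)},v)^2$.

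Second, I pass from the discrete sum to the continuous integral. Since $t\mapsto K(t,v)$ is nondecreasing and the mesh sizes decay geometrically, a dyadic Riemann-sum comparison on the intervals $[\overline h_{{\rm uni}(m+1)}, \overline h_{{\rm uni}(m)}]$ yields
\[
\sum_{m=0}^\infty \overline h_{{\rm uni}(m)}^{-2\sigma}\,K(\overline h_{{\rm uni}(m)},v)^2 \lesssim \int_0^{\overline h_{{\rm uni}(0)}} t^{-2\sigma-1}K(t,v)^2\,dt \le \int_0^\infty t^{-2\sigma-1}K(t,v)^2\,dt \simeq \|v\|^2_{\H^\sigma(\Gamma)},
\]
which, combined with the previous step, proves the lemma.

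The main obstacle is conceptually small but technically subtle: one must ensure that the real-interpolation identification and the Scott--Zhang approximation interact correctly with the trace constraint distinguishing $\H^\sigma(\Gamma)$ from $H^\sigma(\Gamma)$ in the open boundary case. This is exactly what the abstract framework of~\cite{bornemann} is designed to handle; its hypotheses comprise both the approximation property~\eqref{eq:local approx} and the inverse estimate~\eqref{eq:inverseIneq}. For the single-sided bound stated in the lemma, the approximation property alone suffices, while the inverse inequality would be needed for the converse direction inherent in Bornemann's equivalence.
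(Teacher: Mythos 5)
Your proof is correct, and it takes a genuinely more elementary route than the paper's. The paper starts from the same two ingredients you use -- the $L^2$-best-approximation property of $\Pi_{{\rm uni}(m)}$ and the Scott--Zhang estimate of Proposition~\ref{lem:Scott properties}~(iv), which give the Jackson-type bound $\norm{(1-\Pi_{{\rm uni}(m)})v}{L^2(\Gamma)}\lesssim \overline h_{{\rm uni}(m)}\norm{v}{\H^1(\Gamma)}$ -- but then invokes the abstract result of \cite{bornemann} with $X=\H^1(\Gamma)$ and $\alpha=1$, which combines this Jackson inequality with the Bernstein (inverse) inequality \eqref{eq:inverseIneq} to produce the full two-sided multilevel norm equivalence, of which \eqref{eq:normEquiv} is the one half actually needed. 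You instead prove only that half by hand: each term is bounded by the Peetre $K$-functional $K(\overline h_{{\rm uni}(m)},v)$ (using that $J_{{\rm uni}(m)}w\in\XX_{{\rm uni}(m)}$ for every competitor $w\in\H^1(\Gamma)$, plus the triangle inequality), and the dyadic Riemann-sum comparison then bounds the series by $\int_0^\infty t^{-2\sigma-1}K(t,v)^2\,dt\simeq\norm{v}{\H^\sigma(\Gamma)}^2$, which is legitimate precisely because the paper defines $\H^\sigma(\Gamma)$ by the $K$-method. What your route buys is a self-contained argument that, as you correctly observe, does not use the inverse inequality at all; what the paper's route buys is brevity and the stronger equivalence statement, with the dyadic bookkeeping delegated to the cited theorem. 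Two minor points of care: when summing the local estimate \eqref{eq:local approx} over $T\in\TT_{{\rm uni}(m)}$ you should mention the uniformly finite overlap of the patches $\omega_{{\rm uni}(m)}^p(T)$; and the $m=0$ term is not literally covered by the intervals $[\overline h_{{\rm uni}(m+1)},\overline h_{{\rm uni}(m)}]$ with $m\ge0$ -- it needs either the part of the integral above $\overline h_{{\rm uni}(0)}$ together with $K(2t,v)\le 2K(t,v)$, or simply the bound $K(\overline h_{{\rm uni}(0)},v)\le\norm{v}{L^2(\Gamma)}\lesssim\norm{v}{\H^\sigma(\Gamma)}$ with $\overline h_{{\rm uni}(0)}$ fixed by $\TT_0$; since you enlarge to $\int_0^\infty$ anyway, this is cosmetic rather than a gap.
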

\begin{proof}
The $L^2$-best approximation property of $\Pi_{{\rm uni}(m)}$ yields  that $\norm{v-{\Pi}_{{\rm uni}(m)} v}{L^2(\Gamma)}\le \norm{v- J_{{\rm uni}(m)} v}{L^2(\Gamma)}$.
With Proposition~\ref{lem:Scott properties}, we have  that
  \begin{align}\label{eq:approxPropL2}
    \norm{v-\Pi_{{\rm uni}(m)} v}{L^2(\Gamma)} \stackrel{\eqref{eq:local approx}}\lesssim \overline h_{{\rm uni}(m)} \norm{v}{\H^1(\Gamma)} = \overline h_{{\rm uni}(0)} 2^{-m} \norm{v}{\H^1(\Gamma)} \quad\text{for all } v\in \H^1(\Gamma).
  \end{align}
The approximation property \eqref{eq:approxPropL2} (also called Jackson inequality)  together with the inverse inequality \eqref{eq:inverseIneq} (also called Bernstein inequality) from Proposition~\ref{lem:inverseIneq} allow to apply~\cite[Theorem~1 and Corollary~1]{bornemann} with $X=\H^1(\Gamma)$ and $\alpha=1$.
The latter proves~that
  \begin{align*}
  \norm{v}{\H^\sigma(\Gamma)}^2\simeq  \overline h_{{\rm uni}(0)}^{-2\sigma}\norm{v}{L^2(\Gamma)}^2 + \sum_{m=0}^\infty \overline h_{{\rm uni}(m)}^{-2\sigma} \norm{(1-\Pi_{{\rm uni}(m)})v}{L^2(\Gamma)}^2\quad \text{for all }v\in \H^\sigma(\Gamma).
  \end{align*}
{This concludes the proof.}
\end{proof}

\subsection{Level function}
Let $\KK_{\bullet}\in\K$.
For given $T\in\TT_{\bullet}$, let $T_0\in\TT_0$ denote its unique ancestor such that $T\subseteq T_0$ and define 
with the corresponding elements $\widehat T=\gamma^{-1}(T),\widehat T_0=\gamma^{-1}(T_0)$ in the parameter domain the
generation of $T$ by
\begin{align*}
  \gen(T) := \frac{\log(|\widehat T|/|\widehat T_0|)}{\log(1/2)} \in \N_0,
\end{align*}
i.e., $\gen(T)$ denotes the number of bisections of $T_0\in\TT_0$ {in the parameter domain} needed to obtain the element $T\in\TT_{\bullet}$.
To each node $z\in\NN_{\bullet}$, we associate 
\begin{align}\label{eq:def:level}
  \level_{\bullet}(z) := \max\set{\gen(T)}{T\in \TT_{\bullet} \text{ and } z\in T} \quad\text{for all } z\in\NN_{\bullet}.
\end{align}
The function $\level_{\bullet}(\cdot)$  provides a link between the mesh $\TT_{\bullet}$ and the sequence of
uniformly refined meshes $\TT_{{\rm uni}(m)}$.  A simple proof  of the following result is found in~\cite[Lemma~6.11]{dissfuehrer}.
\begin{lemma}\label{lem:levelProperties}
Let $\KK_{\bullet}\in\K$ and    $z\in\NN_{\bullet}$ and $m:=\level_{\bullet}(z)$. Then, it holds  that $z\in \NN_{{\rm uni}(m)}$~and
  \begin{align}
C_{\rm level}^{-1} \overline h_{{\rm uni}(m)} \leq |T| \leq C_{\rm level}\overline h_{{\rm uni}(m)} \quad\text{for all }
    T\in\TT_{\bullet} \text{ with } z\in T.
  \end{align}
  The constant $\setc{c:unifEquivLevel}>0$ depends only on {$\TT_0,{\widehat \kappa}_{\max}$, and $\gamma$}.
  \qed
\end{lemma}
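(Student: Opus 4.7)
The plan is to work entirely in the parameter domain $[a,b]$, where the dyadic refinement structure is explicit, and transfer to arclength on $\Gamma$ at the end via the bi-Lipschitz equivalence~\eqref{eq:bi-Lipschitz}. The core mechanism is the mesh-ratio bound~\eqref{eq:kappamax}: it prevents two elements of $\TT_{\bullet}$ sharing the node $z$ from differing by more than the factor $\widehat\kappa_{\max}$ in length.

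First, I would pick an element $T^\ast\in\TT_{\bullet}$ with $z\in T^\ast$ that realizes the maximum in~\eqref{eq:def:level}, i.e.\ $\gen(T^\ast)=m$. Let $T_0^\ast\in\TT_0$ denote its ancestor, with preimage $\widehat T_0^\ast=[\widehat a_0^\ast,\widehat b_0^\ast]$ in the parameter domain. By definition of $\gen$, $\widehat T^\ast$ is obtained from $\widehat T_0^\ast$ by exactly $m$ dyadic bisections, so its endpoints are of the form $\widehat a_0^\ast+k\,2^{-m}|\widehat T_0^\ast|$ for some $k\in\{0,\dots,2^m\}$. These are precisely the nodes produced by $m$ uniform bisections of $\widehat T_0^\ast$, which are exactly the nodes of $\widehat\TT_{{\rm uni}(m)}$ lying inside $\widehat T_0^\ast$. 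Hence the parameter $\widehat z=\gamma^{-1}(z)$ lies in $\widehat\NN_{{\rm uni}(m)}$, giving $z\in\NN_{{\rm uni}(m)}$.

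Next, for the size estimate, let $T\in\TT_{\bullet}$ be any element with $z\in T$. By the maximality in~\eqref{eq:def:level}, $\gen(T)\le m$, and since $T$ and $T^\ast$ share the node $z$, the mesh-ratio bound gives $\widehat\kappa_{\max}^{-1}|\widehat T^\ast|\le |\widehat T|\le \widehat\kappa_{\max}|\widehat T^\ast|$. Combined with $|\widehat T^\ast|=2^{-m}|\widehat T_0^\ast|$ and the fact that $\KK_0\in\K$ enforces $|\widehat T_0^\ast|\simeq \overline h_{{\rm uni}(0)}$ (with constants depending only on $\TT_0$), this yields $|\widehat T|\simeq \overline h_{{\rm uni}(m)}$. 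Finally, the bi-Lipschitz estimate~\eqref{eq:bi-Lipschitz} applied to $\gamma_{\rm arc}$ (using $|\widehat T|\le \overline h_{{\rm uni}(0)}\le |\Gamma|\cdot 3/4$ after possibly performing finitely many initial uniform bisections, which only affects the constants via $\TT_0$ and $\gamma$) converts this to $|T|\simeq \overline h_{{\rm uni}(m)}$.

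I do not foresee any genuine obstacle: the statement is a bookkeeping lemma reducing the hypothesis $z\in\NN_{\bullet}$ with $\level_{\bullet}(z)=m$ to statements about a concrete dyadic partition. The only point demanding mild care is ensuring that the inequality in~\eqref{eq:bi-Lipschitz} for closed $\Gamma$ is applicable to every relevant $|\widehat T|$, which is handled by noting $|\widehat T|\le \overline h_{{\rm uni}(0)}$ and absorbing the initial-mesh dependence into $\namec{c:unifEquivLevel}{reference}$.
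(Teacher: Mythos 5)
Your overall structure is sound and would yield a valid proof, but the last conversion step invokes the wrong tool. You work in the parameter domain, identify the endpoints of the generation-$m$ ancestor $\widehat T^\ast$ as dyadic points $\widehat a_0^\ast + k\,2^{-m}|\widehat T_0^\ast|$, conclude $\widehat z\in\widehat\NN_{{\rm uni}(m)}$, and then use the local mesh-ratio bound~\eqref{eq:kappamax} together with $|\widehat T_0^\ast|\simeq\overline h_{{\rm uni}(0)}$ (with constants depending on the fixed $\TT_0$) to get $|\widehat T|\simeq\overline h_{{\rm uni}(m)}$ for all $T\ni z$. All of that is correct, and since the paper only cites a dissertation for this lemma, a self-contained argument along these lines is fine.

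The problem is the final step. Estimate~\eqref{eq:bi-Lipschitz} compares \emph{Euclidean} distances $|\gamma_{\rm arc}(s)-\gamma_{\rm arc}(t)|$ with \emph{arclength} distances $|s-t|$ for the arclength parametrization $\gamma_{\rm arc}$; it says nothing about the relation between the parameter-domain length $|\widehat T|=|\gamma^{-1}(T)|$ (measured under the NURBS parametrization $\gamma$) and the arclength $|T|$. That relation is what you actually need, and it follows not from~\eqref{eq:bi-Lipschitz} but from the regularity assumptions of Section~\ref{subsec:boundary parametrization}: $\gamma$ is piecewise $C^1$ on the compact interval $[a,b]$ with one-sided derivatives bounded away from zero, hence $0<c_1\le|\gamma'|\le c_2<\infty$ and $|T|=\int_{\gamma^{-1}(T)}|\gamma'|\,dt\simeq|\widehat T|$ with constants depending only on $\gamma$. (The paper already records the consequence of this for uniform meshes in Section~\ref{section:multilevel}: $\overline h_{{\rm uni}(m)}\simeq|T|$ for $T\in\TT_{{\rm uni}(m)}$, with hidden constants depending on $\TT_0$ and $\gamma$.) Your parenthetical about ``performing finitely many initial uniform bisections'' to enforce $\overline h_{{\rm uni}(0)}\le\tfrac34|\Gamma|$ should be dropped: $\TT_0$ is fixed, so you cannot shrink $\overline h_{{\rm uni}(0)}$, and with the correct tool you do not need that inequality anyway. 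Replace the appeal to~\eqref{eq:bi-Lipschitz} by the bound $c_1\le|\gamma'|\le c_2$ and the proof is complete.
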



\def\ZZ{\mathcal{Z}}

\section{Local multilevel diagonal preconditioner for the hypersingular case}
\label{section:precond}
Throughout this section, let $(\KK_\ell)_{\ell\in\N_0}$ be a sequence of refined knot vectors, i.e., $\KK_\ell,\KK_{\ell+1}\in\K$ with $\KK_{\ell+1}\in\refine(\KK_\ell)$,  
and let $L\in\N_0$. We set 
\begin{align}
\widetilde\NN_{0\setminus -1}:=\NN_0\quad\text{and}\quad\omega_{-1}(\cdot):=\omega_0(\cdot).
\end{align}
For $\ell\in\N_0$, abbreviate the corresponding index set from \eqref{eq:diffJ}
\begin{align}
  \II_\ell:= \set{i\in\{1-p+o,\dots,N_\ell-p-1\}}{\supp(  \overline R_{\ell,i,p})\cap \widetilde \NN_{\ell\setminus\ell-1}\neq \emptyset},
\end{align}
and define the spaces
\begin{align}\label{eq:reddecomp}
  \widetilde\XX_\ell := \linhull\set{\overline R_{\ell,i,p}}{i\in\II_\ell}=\sum_{i\in\II_\ell} \XX_{\ell,i}
 \quad \text{with}\quad
\XX_{\ell,i} :=\linhull\{\overline R_{\ell,i,p}\}.
\end{align}
Note that  $\II_0=\{1-p+o,\dots,N_0-p-1\}$ and $\widetilde{\XX}_0=\XX_0$.
For all $\ell\in\N_0$ and $i\in\II_\ell$, fix a node 
\begin{align}\label{eq:zli}
\widetilde z_{\ell,i} \in\widetilde\NN_{\ell\setminus\ell-1}\quad \text{with}\quad\widetilde z_{\ell,i} \in\supp( \overline R_{\ell,i,p}).
\end{align}
For $V_L\in\XX_L$ and $\ell=0,1,\dots,L$, we define (see Lemma \ref{lem:Scott difference})
\begin{align}
  \widetilde{V}_L^{\ell} := (J_\ell-J_{\ell-1})V_L \in \widetilde\XX_\ell, \quad\text{where } J_{-1}:=0.
\end{align}
For all $i\in \II_\ell$, we set with  the abbreviation $\alpha_{\ell,i}(\widetilde V_L^\ell)$ from \eqref{eq:scotty}
\begin{align}\label{eq:alphas}\begin{split}
V_L^{\ell,i}:=\alpha_{\ell,i}(\widetilde V_L^\ell)\,\overline R_{\ell,i,p}.
\end{split}
\end{align}}
By the duality property \eqref{eq:duality} and the decomposition \eqref{eq:reddecomp}, we have the decompositions
\begin{align}\label{eq:decomposition of v}
  \widetilde{V}_L^{\ell} = \sum_{i\in\II_\ell} V_L^{\ell,i} \quad\text{and}\quad
  V_L = \sum_{\ell=0}^L \widetilde{V}_L^{\ell} = \sum_{\ell=0}^L \sum_{i\in\II_\ell} V_L^{\ell,i}
\end{align}
and hence
\begin{align}\label{eq:space decomp}
  \XX_L = \sum_{\ell=0}^L \sum_{i\in \II_\ell} \XX_{\ell,i}.
\end{align}
With the one-dimensional $\edual{\cdot}{\cdot}$-orthogonal projections $\prec_{\ell,i}$ onto $\XX_{\ell,i}$ defined by
\begin{align}\label{eq:Pelli}
  \edual{\prec_{\ell,i} u}{V_{\ell,i}} = \edual{u}{V_{\ell,i}} \quad\text{for all } u\in \H^{1/2}(\Gamma),
  V_{\ell,i}\in\XX_{\ell,i},
\end{align}
the space decomposition \eqref{eq:space decomp} gives rise to the additive Schwarz operator
\begin{align}\label{eq:PAS}
  \PAS = \sum_{\ell=0}^L \sum_{i\in\II_\ell} \prec_{\ell,i}.
\end{align}

A similar operator for continuous  piecewise affine ansatz functions on affine geometries has been investigated in \cite{ffps}.
Indeed, the proof of the following main result (for the hypersingular case) is essentially inspired by the corresponding proof of \cite[Theorem~1]{ffps}.

\begin{theorem}\label{thm:PAS}
  The additive Schwarz operator $\PAS : \H^{1/2}(\Gamma)\to \XX_L$ satisifies  that
  \begin{align}\label{eq:prop:PAS}
    \evmin \enorm{V_L}^2 \leq \edual{\PAS V_L}{V_L} \leq \evmax \enorm{V_L}^2 \quad\text{for all } V_L\in \XX_L,
  \end{align}
where the constants $\evmin,\evmax>0$ depend only on $\TT_0$, ${\widehat \kappa}_{\max}, p, w_{\min}, w_{\max}$, and $\gamma$. 
\end{theorem}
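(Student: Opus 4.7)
By the abstract theory of additive Schwarz operators on a Hilbert space, the spectral bounds \eqref{eq:prop:PAS} are equivalent to the norm equivalence
\[
\enorm{V_L}^2 \simeq \inf\Big\{ \sum_{\ell=0}^L\sum_{i\in\II_\ell} \enorm{W_{\ell,i}}^2 \ :\ V_L = \sum_{\ell=0}^L\sum_{i\in\II_\ell} W_{\ell,i},\ W_{\ell,i}\in\XX_{\ell,i}\Big\},
\]
with hidden constants corresponding to $(\evmax)^{-1}$ and $(\evmin)^{-1}$. The ``$\lesssim$'' direction produces the lower eigenvalue bound by exhibiting one stable decomposition, while the ``$\gtrsim$'' direction produces the upper eigenvalue bound via a strengthened Cauchy--Schwarz argument. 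The blueprint follows \cite[Thm.~1]{ffps}, adapted to NURBS and to knot-multiplicity increases.

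\textbf{Stable decomposition (lower bound).} I use the explicit splitting \eqref{eq:decomposition of v}, $V_L^{\ell,i} = \alpha_{\ell,i}(\widetilde V_L^\ell)\overline R_{\ell,i,p}$, and show
$\sum_{\ell,i}\enorm{V_L^{\ell,i}}^2 \lesssim \enorm{V_L}^2$. Since each $i\in\II_\ell$ corresponds to a basis function whose support contains a node from $\widetilde\NN_{\ell\setminus\ell-1}$ and hence of level $\ell$, Lemma~\ref{lem:levelProperties} gives $h_\ell \simeq \overline h_{{\rm uni}(\ell)}$ uniformly on $\supp(\overline R_{\ell,i,p})$. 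The inverse inequality \eqref{eq:inverseIneq} combined with $\norm{\overline R_{\ell,i,p}}{L^2(\Gamma)}^2 \lesssim \overline h_{{\rm uni}(\ell)}$ and the dual-basis estimate \eqref{eq:dual inequality2} applied to the coefficient $\alpha_{\ell,i}(\widetilde V_L^\ell)$ then yields
\[
\enorm{V_L^{\ell,i}}^2 \lesssim \overline h_{{\rm uni}(\ell)}^{-1}\norm{\widetilde V_L^\ell}{L^2(\supp \overline R_{\ell,i,p})}^2.
\]
Using the finite overlap of $\{\supp(\overline R_{\ell,i,p})\}_{i\in\II_\ell}$ (a bound depending only on $p$) together with Lemma~\ref{lem:Scott difference} (which localizes $\widetilde V_L^\ell$ in their union), summation over $i$ gives
\[
\sum_{i\in\II_\ell}\enorm{V_L^{\ell,i}}^2 \lesssim \overline h_{{\rm uni}(\ell)}^{-1}\norm{\widetilde V_L^\ell}{L^2(\Gamma)}^2.
\]

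\textbf{The main multilevel estimate.} The crux is to prove
\[
\sum_{\ell=0}^L \overline h_{{\rm uni}(\ell)}^{-1}\norm{\widetilde V_L^\ell}{L^2(\Gamma)}^2 \lesssim \norm{V_L}{\H^{1/2}(\Gamma)}^2 \simeq \enorm{V_L}^2.
\]
I split $\widetilde V_L^\ell = (I-J_{\ell-1})V_L - (I-J_\ell)V_L$ and bound each summand by a best-approximation error via Proposition~\ref{lem:Scott properties} (ii),(iv) on those elements of $\TT_\ell$, $\TT_{\ell-1}$ that touch $\widetilde\NN_{\ell\setminus\ell-1}$, where Lemma~\ref{lem:levelProperties} guarantees the local mesh-size is $\simeq \overline h_{{\rm uni}(\ell)}$. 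The target is a bound of the form $\norm{\widetilde V_L^\ell}{L^2(\Gamma)}^2 \lesssim \norm{(I-\Pi_{{\rm uni}(\ell')})V_L}{L^2(\Gamma)}^2$ for a level $\ell'$ shifted by a constant number of levels from $\ell$. Summing in $\ell$ and invoking the multilevel norm equivalence of Lemma~\ref{lem:normEquiv} with $\sigma=1/2$ then closes the estimate.

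\textbf{Upper bound and expected main obstacle.} The reverse inequality $\edual{\PAS V_L}{V_L}\lesssim \enorm{V_L}^2$ follows from a dual strengthened Cauchy--Schwarz estimate on the decomposition \eqref{eq:space decomp}: combining the inverse inequality \eqref{eq:inverseIneq2} with Lemma~\ref{lem:normEquiv} one expects decay $|\edual{U_{\ell,i}}{U_{\ell',i'}}|\lesssim 2^{-|\ell-\ell'|/2}\enorm{U_{\ell,i}}\enorm{U_{\ell',i'}}$ for $U_{\ell,i}\in\XX_{\ell,i}$, which together with the finite in-level overlap bounds the spectral radius of the Gram matrix uniformly. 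The principal obstacle is the multilevel estimate above: bridging the adaptive Scott--Zhang differences $(J_\ell-J_{\ell-1})V_L$ with the uniform $L^2$-projection errors $(I-\Pi_{{\rm uni}(m)})V_L$ with constants independent of $L$. The decisive ingredients are Lemma~\ref{lem:Scott difference} (sharp support localization), Lemma~\ref{lem:levelProperties} (node-level to uniform mesh-size), and the stability/approximation of $J_\ell$ (Proposition~\ref{lem:Scott properties}); the NURBS- and multiplicity-specific features enter only through the scaling constant $\Cscale$ in Lemma~\ref{lem:basis scaling} and the dual-basis bound \eqref{eq:dual inequality2}, all of which are independent of the admissible refinement in $\mathbb{K}$.
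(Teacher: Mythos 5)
Your outline identifies the correct ingredients (stable decomposition for the lower bound, strengthened Cauchy--Schwarz for the upper bound, Lemmas~\ref{lem:Scott difference}, \ref{lem:normEquiv}, \ref{lem:levelProperties}, Proposition~\ref{lem:Scott properties}, and the inverse estimates), but the stable-decomposition argument has a genuine gap. You identify the refinement \emph{step} index $\ell$ with the uniform \emph{level}: you claim that a node in $\widetilde\NN_{\ell\setminus\ell-1}$ ``is of level $\ell$'' and from Lemma~\ref{lem:levelProperties} derive $h_\ell\simeq\overline h_{{\rm uni}(\ell)}$ on $\supp(\overline R_{\ell,i,p})$. For adaptive sequences this is false: the level of the chosen node is $m=\level_\ell(\widetilde z_{\ell,i})$, which is determined by how many bisections separate $\TT_0$ from the elements touching $\widetilde z_{\ell,i}$, not by how many refinement steps have been performed. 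If, say, each step $\ell$ bisects an unbisected element of $\TT_0$ (or merely increases a knot multiplicity), all new nodes remain at level $0$ or $1$ while $\ell$ grows. Consequently the ``main multilevel estimate'' you aim for,
\[
\sum_{\ell=0}^L \overline h_{{\rm uni}(\ell)}^{-1}\norm{\widetilde V_L^\ell}{L^2(\Gamma)}^2 \lesssim \norm{V_L}{\H^{1/2}(\Gamma)}^2,
\]
is wrong as stated: the weight $\overline h_{{\rm uni}(\ell)}^{-1}$ grows geometrically in $\ell$ regardless of where the refinement happens, and the left-hand side can blow up.

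The paper avoids this by never letting $\ell$ play the role of a level. From~\eqref{eq:vliw2} the local bound is $\enorm{V_L^{\ell,i}}^2\lesssim \overline h_{{\rm uni}(m)}^{-1}\norm{(1-\Pi_{{\rm uni}(m-C_1)})V_L}{L^2(\omega_{\ell-1}^{2p+1}(\widetilde z_{\ell,i}))}^2$ with $m=\level_\ell(\widetilde z_{\ell,i})$, keeping the $L^2$-norm \emph{localized}. The double sum $\sum_\ell\sum_{i\in\II_\ell}$ is then reorganized as a sum over levels $m$, nodes $z\in\NN_L\cap\NN_{{\rm uni}(m)}$, and pairs $(\ell,i)\in\ZZ_m(z)$; Lemma~\ref{lem:li set} ($|\ZZ_m(z)|\le C_2$, using that a node's multiplicity can increase at most $p$ times) together with the finite overlap of uniform patches collapses this to $\sum_m \overline h_{{\rm uni}(m)}^{-1}\norm{(1-\Pi_{{\rm uni}(m-C_1)})V_L}{L^2(\Gamma)}^2$, to which Lemma~\ref{lem:normEquiv} applies. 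By passing to a global $L^2$-norm and weighting by $\overline h_{{\rm uni}(\ell)}^{-1}$, your approach discards exactly the bookkeeping that Lemma~\ref{lem:li set} provides; without it one cannot rule out many steps $\ell$ contributing at the same level $m$ and location. The same issue occurs in your upper-bound sketch: the decay in the strengthened Cauchy--Schwarz must be in the level difference, which is why the paper writes $\PAS=\sum_{m=0}^M\QQ_m$ with $\QQ_m$ collecting the $(\ell,i)$ whose fixed node has $\level_\ell(\widetilde z_{\ell,i})=m$ (see~\eqref{eq:QQ}), not the naive $\sum_\ell$ grouping your proposal suggests.
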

We split the proof into two parts. In Section~\ref{sec:PAS:lb}, we show the lower bound. 
The upper bound is proved in Section~\ref{sec:PAS:ub}.

\subsection{Proof of Theorem~\ref{thm:PAS} (lower bound)}\label{sec:PAS:lb}
In the remainder of this section, we will show that the decomposition \eqref{eq:decomposition of v} of $V_L$ is stable, i.e.,
\begin{align}\label{eq:lower decomp}
  \sum_{\ell=0}^L \sum_{i\in\II_\ell} \enorm{V_L^{\ell,i}}^2 \lesssim \enorm{V_L}^2.
\end{align}
It is well known from additive Schwarz theory~\cite{lions88,wid89,zhang92,ToselliWidlund} that this proves the lower bound in Theorem~\ref{thm:PAS}; see, e.g.,  \cite[Lemma~3.1]{zhang92}.
We start with two auxiliary lemmas. To ease readability, the proofs of Lemma~\ref{lem:localProj} and Lemma~\ref{lem:li set} are postponed to the end of this section after the proof of the lower bound in~\eqref{eq:prop:PAS}.
In the following, we set $\XX_{{\rm uni}(m)} := \XX_{{\rm uni}(0)}$ if $m<0$.

\begin{lemma}\label{lem:localProj}
{Let $\ell\in \N$ and $q\in\N$. There exists a constant $C_1(q)\in\N_0$ such that for all {$z\in\NN_\ell$} with $m=\level_\ell(z)$, it holds  that
  \begin{align}
\set{V|_{\omega_{\ell-1}^q(z)}}{V\in\XX_{{\rm uni}(m-{C_1}(q))}}\subseteq \set{V|_{\omega_{\ell-1}^q(z)}}{V\in\XX_{\ell-1}}
  \end{align}
  The constant ${C_1}(q)$ depends only on ${\widehat \kappa}_{\max}, \gamma$ and $q$.
  We abbreviate ${C_1}:={C_1}(2p+1)$.}
\end{lemma}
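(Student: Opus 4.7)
The plan is to show that, on the patch $\omega_{\ell-1}^q(z)$, the mesh $\TT_{\ell-1}$ is at least as fine as the uniform mesh $\TT_{{\rm uni}(m-C_1(q))}$, and then to invoke the knot-insertion formula of Lemma~\ref{lem:properties for B-splines}\eqref{item:knot insertion} to obtain the claimed inclusion of NURBS spaces.

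First, I would establish a local lower bound on the generation of $\TT_{\ell-1}$-elements near $z$. By Lemma~\ref{lem:levelProperties} applied to $\TT_\ell$, every $T\in\TT_\ell$ with $z\in T$ has $|\gamma^{-1}(T)|\simeq \overline h_{{\rm uni}(m)}$. Iterating the mesh-ratio bound $\widehat\kappa_\ell\le\widehat\kappa_{\max}$, any $\TT_\ell$-element in the $k$-th ring $\omega_\ell^k(z)$ has parametric length within a factor $\widehat\kappa_{\max}^k$ of $\overline h_{{\rm uni}(m)}$. Since each $T'\in\TT_{\ell-1}$ is a union of $\TT_\ell$-elements and since the mesh-ratio $\widehat\kappa_{\ell-1}\le\widehat\kappa_{\max}$ controls the sizes of neighbors in $\TT_{\ell-1}$, this transports into the bound $\gen(T')\ge m-C_1(q)$ for every $T'\in\TT_{\ell-1}$ intersecting the enlarged patch $\omega_{\ell-1}^{q+1}(z)$, where $C_1(q)$ depends only on $q$, $\widehat\kappa_{\max}$, and $\gamma$.

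Second, because both $\widehat\KK_{\ell-1}$ and $\widehat\KK_{{\rm uni}(m-C_1)}$ arise from $\widehat\KK_0$ by iterated dyadic bisection, the generation bound implies that every knot of $\widehat\KK_{{\rm uni}(m-C_1)}$ lying in the parametric image of $\omega_{\ell-1}^{q+1}(z)$ is also a knot of $\widehat\KK_{\ell-1}$. Equivalently, on this enlarged patch, $\TT_{\ell-1}$ is a refinement of $\TT_{{\rm uni}(m-C_1)}$. Third, for $V\in\XX_{{\rm uni}(m-C_1)}$ with expansion $V=\sum_i a_i\,\widehat R_{{\rm uni}(m-C_1),i,p}\circ\gamma^{-1}$, I would iteratively apply Lemma~\ref{lem:properties for B-splines}\eqref{item:knot insertion} to insert the missing knots of $\widehat\KK_{\ell-1}$, rewriting every uniform B-spline whose support meets $\omega_{\ell-1}^q(z)$ as a combination of $\widehat B_{\ell-1,j,p}$. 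The compatibility of weights imposed by the common denominator $\widehat w$ in~\eqref{eq:w} preserves the NURBS denominator, so the result is a combination of $\overline R_{\ell-1,j,p}$ lying in $\XX_{\ell-1}$ and agreeing with $V$ on $\omega_{\ell-1}^q(z)$. The buffer ring $\omega_{\ell-1}^{q+1}(z)\setminus\omega_{\ell-1}^q(z)$ allows me to neglect coefficients of basis functions whose supports leave the enlarged patch, without altering values on $\omega_{\ell-1}^q(z)$.

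The main obstacle is the first step. Since $\TT_{\ell-1}$ is only a coarsening of $\TT_\ell$, its local mesh-size near $z$ is not directly controlled by $\overline h_{{\rm uni}(m)}$, and the mesh-ratio $\widehat\kappa_{\ell-1}$ alone is not sufficient. The crucial ingredient is the mesh-ratio of the \emph{finer} mesh $\TT_\ell$: it forces the sub-elements of a $\TT_{\ell-1}$-element adjacent to the generation-$m$ element at $z$ to transition gradually to neighboring $\TT_{\ell-1}$-elements, which, combined with $\widehat\kappa_{\ell-1}$, propagates a uniform size bound across $q+1$ rings and yields an explicit value $C_1(q)\lesssim 1+q\log_2\widehat\kappa_{\max}$. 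Once this is in hand, Steps 2 and 3 reduce to routine bookkeeping with the knot-insertion formula and with the weight compatibility~\eqref{eq:w}.
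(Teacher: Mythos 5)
Your proposal follows the same overall architecture as the paper's proof: establish a lower bound on the generations of $\TT_{\ell-1}$-elements in the patch around $z$, deduce that the relevant nodes (with multiplicities) of the uniform mesh already belong to $\TT_{\ell-1}$, and conclude that the restricted spline spaces are nested. Your Steps~2 and~3 are sound and are only cosmetically different from the paper (you phrase the final step via iterated knot insertion, the paper invokes Lemma~\ref{lem:properties for B-splines}\eqref{item:spline basis} together with the fixed NURBS denominator; both are fine).

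The gap is in Step~1, which you yourself call the ``main obstacle'', and your proposed remedy does not close it. You argue that Lemma~\ref{lem:levelProperties} gives $|\gamma^{-1}(\tau)|\simeq\overline h_{{\rm uni}(m)}$ for $\tau\in\TT_\ell$ with $z\in\tau$, and that the mesh-ratio $\widehat\kappa_\ell\le\widehat\kappa_{\max}$ together with ``$T'\in\TT_{\ell-1}$ is a union of $\TT_\ell$-elements'' and $\widehat\kappa_{\ell-1}\le\widehat\kappa_{\max}$ yields $\gen(T')\ge m-C_1(q)$. But none of these facts bounds how many $\TT_\ell$-sub-elements a single $\TT_{\ell-1}$-element $T^*\ni z$ can contain, hence none bounds $|\gamma^{-1}(T^*)|$ in terms of $\overline h_{{\rm uni}(m)}$. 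The ``gradual transition'' you describe need not occur at all: if $T^*$ is bisected uniformly into $2^k$ equal sub-elements (and the whole of $\TT_{\ell-1}$ is refined the same way), then $\widehat\kappa_\ell=\widehat\kappa_{\ell-1}=1$, every $\TT_\ell$-element near $z$ has size exactly $\overline h_{{\rm uni}(m)}$, yet $\gen(T^*)=m-k$ with $k$ unconstrained by any mesh-ratio. What the paper actually uses, as its very first observation, is the father--son bound $|\gen(\tau)-\gen(T)|\le 1$ for $\tau\in\TT_\ell$ and its father $T\in\TT_{\ell-1}$ — i.e., the passage $\KK_{\ell-1}\to\KK_\ell$ bisects each element at most once. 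That inequality, not any mesh-ratio, is what ties $m=\level_\ell(z)$ to $\gen(T^*)$; only afterwards is $\widehat\kappa_{\ell-1}$ iterated across $q$ rings of $\TT_{\ell-1}$ to produce $C_1(q)$. This ingredient is absent from your argument and cannot be recovered from $\widehat\kappa_\ell$ or $\widehat\kappa_{\ell-1}$.
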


\begin{lemma}\label{lem:li set}
For each $m\in\N_0$ and $z\in\NN_L$, it holds   that $|\ZZ_m(z)|\le {C_2}$,  where 
\begin{align}\label{eq:li set}
\ZZ_m(z):=\set{(\ell,i)}{\ell\in\{0,\dots,L\},i\in\II_\ell,\level_\ell({\widetilde z_{\ell,i})=m,z=\widetilde z_{\ell,i}}}.
\end{align}
The   constant $C_2>0$  depends only on $p$.
\end{lemma}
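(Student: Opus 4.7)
The plan is to bound $|\ZZ_m(z)|$ by a constant depending only on $p$ through two independent combinatorial counts: first, the number of levels $\ell$ that can contribute, and second, the number of basis indices $i$ at each such level. Notably, the level constraint $\level_\ell(\widetilde z_{\ell,i})=m$ will not enter the argument; the bound follows purely from the structural requirements $z\in\widetilde\NN_{\ell\setminus\ell-1}$ and $z\in\supp(\overline R_{\ell,i,p})$ that are forced by $z=\widetilde z_{\ell,i}$ and $i\in\II_\ell$ (via \eqref{eq:zli}).

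For the level count, I will exploit the nestedness of knot vectors from Section~\ref{section:mesh-refinement}: once $z\in\NN_{\ell_0}$, it belongs to $\NN_\ell$ for every $\ell\ge\ell_0$, and the multiplicity $\#_\ell z$ is non-decreasing in $\ell\ge\ell_0$. Under the convention $\widetilde\NN_{0\setminus-1}=\NN_0$, membership $z\in\widetilde\NN_{\ell\setminus\ell-1}$ for $\ell\ge 1$ is equivalent to the strict inequality $\#_\ell z>\#_{\ell-1}z$. Since interior-node multiplicities take values in $\{1,\dots,p\}$ and endpoint multiplicities are frozen at $p+1$, the multiplicity can strictly increase at most $p-1$ times after $z$ first appears; together with the single level at which $z$ enters $\NN_\bullet$, this yields at most $p$ indices $\ell$ with $z\in\widetilde\NN_{\ell\setminus\ell-1}$.

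For the support count, fix such an $\ell$ and count the $i\in\II_\ell$ with $\widetilde z_{\ell,i}=z$. Any such $i$ must satisfy $z\in\supp(\overline R_{\ell,i,p})$, and by Lemma~\ref{lem:properties for B-splines}\eqref{item:B-splines local}, $\supp(R_{\ell,i,p})=\gamma([t_{\ell,i-1},t_{\ell,i+p}])$. Writing the parameter value of $z$ as the knot cluster $\widehat z=t_{\ell,j_1}=\dots=t_{\ell,j_k}$ with $k=\#_\ell z\le p+1$ and $j_{s+1}=j_s+1$, the condition $\widehat z\in[t_{\ell,i-1},t_{\ell,i+p}]$ is equivalent to $j_1-p\le i\le j_k+1$, giving at most $p+k+1\le 2p+2$ admissible indices. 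The closure relation $\overline R_{\ell,1-p,p}=R_{\ell,1-p,p}+R_{\ell,N_\ell-p,p}$ for closed $\Gamma$ only merges one further support into the count, producing a bounded additive correction.

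Combining the two bounds yields $|\ZZ_m(z)|\le p\cdot(2p+2)$, a constant depending only on $p$. The only mildly delicate step is the level count: the key observation is that a monotone integer sequence in the finite range $\{1,\dots,p+1\}$ can strictly increase only boundedly often, which is what restricts how many refinement steps can see $z$ as ``newly relevant''. The support count is a direct consequence of the standard B-spline support structure, and no mesh-dependent quantities such as $\widehat\kappa_{\max}$ enter the final constant.
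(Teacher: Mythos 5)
Your proof is correct and follows essentially the same two-step argument as the paper: at most $p$ levels $\ell$ can have $z\in\widetilde\NN_{\ell\setminus\ell-1}$, since multiplicities are non-decreasing under refinement and bounded by $p$, and for each such $\ell$ the number of admissible indices $i$ is bounded in terms of $p$ alone by the locality of the B-spline supports. The only cosmetic difference is that you count the per-level indices explicitly through the knot indices ($j_1-p\le i\le j_k+1$), whereas the paper bounds them by enclosing the supports in the patch $\omega_\ell^{2(p+1)}(z)$; both yield a constant depending only on $p$.
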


\begin{proof}[Proof of lower bound in~\eqref{eq:prop:PAS}]
The proof is split into two steps.
\\
\noindent
{\bf Step 1:}
We show \eqref{eq:vliw2}.
The norm equivalence $\enorm{\cdot}\simeq\norm{\cdot}{H^{1/2}(\Gamma)}$, the inverse inequality \eqref{eq:inverseIneq} for NURBS and $\norm{h_\ell^{-1/2}\overline R_{\ell,i,p}}{L^2(\Gamma)}\lesssim 1$  prove for the functions $V_L^{\ell,i}$ of \eqref{eq:alphas}  that
\begin{align*}
\enorm{V_L^{\ell,i}}^2 \lesssim\big|\alpha_{\ell,i}\big((J_\ell-J_{\ell-1})V_L\big)\big|^2.
\end{align*}
The Cauchy-Schwarz inequality, and the property~\eqref{eq:dual inequality2} of the dual
basis functions imply~that
\begin{align}\label{eq:vliw}
  \enorm{V_L^{\ell,i}}^2 
  \lesssim |\supp( \overline R_{\ell,i,p})|^{-1} \norm{(J_\ell-J_{\ell-1})V_L}{L^2(\supp( \overline R_{\ell,i,p}))}^2.
\end{align}
We abbreviate $m=\level_\ell(\widetilde z_{\ell,i})$.
Proposition~\ref{lem:Scott properties} {\rm (i)} and Lemma~\ref{lem:localProj} together with nestedness $\XX_{\ell-1}\subseteq\XX_{\ell}$ imply for almost every $x\in\omega_{\ell-1}^{p+1}(\widetilde{z}_{\ell,i})$  that
\begin{align*}
(J_\ell \Pi_{{\rm uni}(m-{C_1})} V_L) (x)= (\Pi_{\rm uni(m-{C_1}}) V_L)(x)=
(J_{\ell-1} \Pi_{{\rm uni}(m-{C_1})} V_L)(x).
\end{align*}
This together with \eqref {eq:zli} and  local $L^2$-stability of $J_\ell$ and $J_{\ell-1}$ (Proposition \ref{lem:Scott properties} (ii))
  shows~that
\begin{align}\label{eq:vtil}
\begin{split}  \norm{(J_\ell-J_{\ell-1})V_L}{L^2(\supp(\overline R_{\ell,i,p}))}^2 
  &= \norm{(J_\ell-J_{\ell-1})(1-\Pi_{{\rm uni}(m-{C_1})})V_L}{L^2(\supp(\overline R_{\ell,i,p}))}^2
  \\
  &\leq \norm{(J_\ell-J_{\ell-1})(1-\Pi_{{\rm uni}(m-{C_1})})V_L}{L^2(\omega_{\ell-1}^{p+1}(\widetilde z_{\ell,i}))}^2
  \\
  &\lesssim \norm{(1-\Pi_{{\rm uni}(m-{C_1})})V_L}{L^2(\omega_{\ell-1}^{2p+1}(\widetilde z_{\ell,i}))}^2.
  \end{split}
\end{align}
Further, Lemma~\ref{lem:levelProperties} shows  that $\overline h_{\rm uni(m)}\simeq|\supp(\overline {R}_{\ell,i,p})|$.
Hence, \eqref{eq:vliw} and \eqref{eq:vtil} prove  that 
\begin{align}\label{eq:vliw2}
  \enorm{V_L^{\ell,i}}^2 \lesssim \overline h_{{\rm uni}(m)}^{-1} 
  \norm{(1-\Pi_{{\rm uni}(m-{C_1})})V_L}{L^2(\omega_{\ell-1}^{2p+1}(\widetilde z_{\ell,i}))}^2.
\end{align}
\\
\noindent
{\bf Step 2:} 
We show \eqref{eq:lower decomp}, which concludes  the lower bound in \eqref{eq:prop:PAS}.
Step~1 gives that
\begin{align*}
  \sum_{\ell=0}^L \sum_{i\in\II_\ell} \enorm{V_L^{\ell,i}}^2 &= \sum_{m=0}^\infty \sum_{\ell=0}^L \sum_{\substack{i\in\II_\ell\\
  \level_\ell(\widetilde z_{\ell,i})=m}} \enorm{V_L^{\ell,i}}^2 \\
&  \lesssim \sum_{m=0}^\infty \sum_{\ell=0}^L \sum_{\substack{i\in\II_\ell\\ \level_\ell(\widetilde z_{\ell,i})=m}}
  \overline h_{{\rm uni}(m)}^{-1} 
  \norm{(1-\Pi_{{\rm uni}(m-{C_1})})V_L}{L^2(\omega_{\ell-1}^{2p+1}(\widetilde z_{\ell,i}))}^2.
\end{align*}
There exists a constant $C_3\in\N$,  which depends only on $p,\widehat \kappa_{\max},\gamma$,  and $\TT_0$, such that for 
$z\in\NN_\ell$ with $\level_{\ell}(z)=m$, it holds  that
\begin{align}\label{unifPatch}
\omega_{\ell-1}^{2p+1}(z) \subseteq
\omega_{{\rm uni}(m)}^{{C_3}}(z).
\end{align}
Hence,
\begin{align*}
  \sum_{\ell=0}^L \sum_{i\in\II_\ell} \enorm{V_L^{i,\ell}}^2 & \lesssim 
  \sum_{m=0}^\infty \sum_{\ell=0}^L \sum_{\substack{i\in\II_\ell\\ \level_\ell(\widetilde z_{\ell,i})=m}}
  \overline h_{{\rm uni}(m)}^{-1} 
  \norm{(1-\Pi_{{\rm uni}(m-{C_1})})V_L}{L^2(\omega_{{\rm uni}(m)}^{{C_3}}(\widetilde z_{\ell,i}))}^2 \\
  &\stackrel{\eqref{eq:li set}}{=} \sum_{m=0}^\infty \sum_{z\in\NN_L} \sum_{(\ell,i)\in \ZZ_m(z)}
  \overline h_{{\rm uni}(m)}^{-1} \norm{(1-\Pi_{{\rm uni}(m-{C_1})})V_L}{L^2(\omega_{{\rm uni}(m)}^{{C_3}}(z))}^2.
\end{align*}
If $z\in\NN_L$ and $(\ell,i)\in\ZZ_m(z)$, it follows  that $z\in\NN_\ell$ with $\level_\ell(z) = m$ by definition.
Lemma~\ref{lem:levelProperties} implies  that  $z\in\NN_{{\rm uni}(m)}$.
This and Lemma~\ref{lem:li set} give  that
\begin{align*}
  &\sum_{m=0}^\infty \sum_{z\in\NN_L} \sum_{(\ell,i)\in \ZZ_m(z)}
  \overline h_{{\rm uni}(m)}^{-1} \norm{(1-\Pi_{{\rm uni}(m-{C_1})})V_L}{L^2(\omega_{{\rm uni}(m)}^{{C_3}}(z))}^2 \\
  &\qquad\qquad= \sum_{m=0}^\infty \sum_{z\in\NN_L\cap \NN_{{\rm uni}(m)}}
  \sum_{(\ell,i)\in \ZZ_m(z)} 
  \overline h_{{\rm uni}(m)}^{-1} \norm{(1-\Pi_{{\rm uni}(m-{C_1})})V_L}{L^2(\omega_{{\rm uni}(m)}^{{C_3}}(z))}^2
  \\ 
  &\qquad\qquad \stackrel{\eqref{eq:li set}}{\lesssim} \sum_{m=0}^\infty \sum_{z\in\NN_L\cap \NN_{{\rm uni}(m)}}
  \overline h_{{\rm uni}(m)}^{-1} \norm{(1-\Pi_{{\rm uni}(m-{C_1})})V_L}{L^2(\omega_{{\rm uni}(m)}^{{C_3}}(z))}^2
  \\ 
  &\qquad\qquad\leq \sum_{m=0}^\infty \sum_{z\in\NN_{{\rm uni}(m)}}
  \overline h_{{\rm uni}(m)}^{-1}\norm{(1-\Pi_{{\rm uni}(m-{C_1})})V_L}{L^2(\omega_{{\rm uni}(m)}^{{C_3}}(z))}^2\\
  &\qquad\qquad\lesssim \sum_{m=0}^\infty\overline h_{{\rm uni}(m)}^{-1}\norm{(1-\Pi_{{\rm uni}(m-{C_1})})V_L}{L^2(\Gamma)}^2.
\end{align*}
The definition $\Pi_{{\rm uni}(m)}=\Pi_{{\rm uni}(0)}$ 
for $m<0$ yields  that
\begin{align*}
\sum_{m=0}^\infty\overline h_{{\rm uni}(m)}^{-1}\norm{(1-\Pi_{{\rm uni}(m-{C_1})})V_L}{L^2(\Gamma)}^2\lesssim
\sum_{m=0}^\infty\overline h_{{\rm uni}(m)}^{-1}\norm{(1-\Pi_{{\rm uni}(m)})V_L}{L^2(\Gamma)}^2.
\end{align*}
Combining the latter three estimates, 
Lemma~\ref{lem:normEquiv} leads us to
\begin{align*}
 \sum_{\ell=0}^L \sum_{i\in\II_\ell} \enorm{V_L^{\ell,i}}^2
 \lesssim \sum_{m=0}^\infty\overline h_{{\rm uni}(m)}^{-1}\norm{(1-\Pi_{{\rm uni}(m)})V_L}{L^2(\Gamma)}^2
\stackrel{\eqref{eq:normEquiv}} \lesssim \norm{V_L}{H^{1/2}(\Gamma)}^2
 \simeq\enorm{V_L}^2.
\end{align*}
This proves \eqref{eq:lower decomp} and yields the lower bound in \eqref{eq:prop:PAS}.
\end{proof}

{%
\begin{proof}[Proof of Lemma~\ref{lem:localProj}]
  We show that 
  \begin{align}\label{eq:nested node patches}
\NN_{{\rm uni}(m-{C_1}(q))} \cap \omega_{\ell-1}^q(z)\subseteq \NN_{\ell-1}\cap\omega_{\ell-1}^q(z).
  \end{align}
  Let $\tau\in\TT_\ell$ such that $z\in \tau$. 
  Let $T\in\TT_{\ell-1}$ be the father element of $\tau$, i.e., $\tau\subseteq T$. 
  We note that $\gen(T)=\gen(\tau)$ or $\gen(T)=\gen(\tau)+1$ and hence
  \begin{align*}
    |\gen(\tau)-\gen(T)| \le 1.
  \end{align*}
  Moreover, there exists a constant $C\in\N$, which depends only on $\widehat \kappa_\mathrm{max},\gamma$ and $q$ such that 
  \begin{align*}
    |\gen(T)-\gen(T')| \leq C \quad \text{for all } T'\in\TT_{\ell-1} \text{ with } T'\subseteq
    \omega_{\ell-1}^q(z),
  \end{align*}
  i.e., the difference in the element generations within some  $q$-th order patch is uniformly bounded. This implies  that
  \begin{align*}
    \gen(\tau) \leq \gen(T') + C+1 \quad\text{for all } T'\in\TT_{\ell-1} \text{ with } T'\subseteq
    \omega_{\ell-1}^q(z).
  \end{align*}
By definition of $\level_\ell(z)$, we thus infer that ${C_1}(q):=C+1>0$ yields  that
  \begin{align}\label{eq:levelineq}
    m=\level_\ell(z) \leq \min\set{\gen(T')}{T'\in\TT_{\ell-1} \text{ and } T'\subseteq \omega_{\ell-1}^{q}(z)}
    + {C_1}(q).
  \end{align}
For $m-{C_1}(q)\leq 0$, we have  that $\XX_{{\rm uni}(m-{C_1}(q))} = \XX_0$, and the assertion is
  clear. 
  Therefore, we suppose  that $m-{C_1}(q)\geq1$. 
Let $T'\in\TT_{\ell-1}$ with $T'\subseteq \omega_{\ell-1}^q(z)$.  
According to \eqref{eq:levelineq}, it holds  that  $m-{C_1}(q)\le \gen(T')$.
Therefore, there exists  a father element $Q\in\TT_{{\rm uni}(m-{C_1}(q))}$ with $T'\subseteq Q$.
Suppose that \eqref{eq:nested node patches} does not hold true.
Then there is some $z'\in\NN_{{\rm uni}(m-{C_1}(q))} \cap \omega_{\ell-1}^q(z)$, which is not contained in $\NN_{\ell-1}\cap\omega_{\ell-1}^q(z)$.
Therefore, $z'$ is in the interior of some $T'\in\TT_{\ell-1}$ with $T'\subseteq\omega_{\ell-1}^q(z)$ and hence also in the interior of the father $Q\in\TT_{{\rm uni}(m-{C_1}(q))}$ of $T'$.
This contradicts $z\in\NN_{{\rm uni}(m-{C_1}(q))}$ 
and concludes the proof of \eqref{eq:nested node patches}. 

  By the definition of $\XX_{{\rm uni}(m-{C_1}(q))}$, we even have for the multiplicities   that $\#_{{\rm uni}(m-{C_1}(q))} z'\le \#_{\ell-1} z'$ for $z'\in \NN_{{\rm uni}(m-{C_1}(q))}\cap \omega_{\ell-1}^q(z)$.
  With Lemma \ref{lem:properties for B-splines} \eqref{item:spline basis} and the fact that the NURBS denominator $w$ of \eqref{eq:w} is fixed, this proves the assertion.
  \end{proof}
}

\begin{proof}[Proof of Lemma~\ref{lem:li set}]
As we only use bisection or knot multiplicity increase (with maximal multiplicity $p$), it holds   that
$|\set{\ell\in\{1,\dots,L\}}{z\in\widetilde\NN_{\ell\setminus\ell-1}}|\le p$.
This shows that only a bounded number of different $\ell$ appears in the set of \eqref{eq:li set}.
For fixed $\ell\in\{0,\dots,L\}$, \eqref{eq:zli} and $\omega_{\ell-1}^{p+1}(z)\subseteq \omega_\ell^{2(p+1)}(z)$ yield  that
\begin{align*}
\set{i\in\II_\ell}{\level_\ell(\widetilde z_{\ell,i})=m,z=\widetilde z_{\ell,i}}&\stackrel{\eqref{eq:zli}}{\subseteq}\set{i}{\supp(\overline{R}_{\ell,i,p})\subseteq\omega_{\ell-1}^{p+1}(z)}\\
&\,\,\subseteq\,\,\set{i}{\supp(\overline{R}_{\ell,i,p})\subseteq\omega_\ell^{2(p+1)}(z)}.
\end{align*}
The cardinality of the last set is bounded by a constant ${C_2}>0$ that depends only on $p$.
\end{proof}

\subsection{Proof of Theorem~\ref{thm:PAS} (upper bound)}\label{sec:PAS:ub}
For $m\in\N_0$, let $\KK_{{\rm uni}(m,p)}\in\K$  be the knot vector with $\TT_{{\rm uni}(m,p)}=\TT_{{\rm uni}(m)}$ and $\#z=p$ for all $z\in \NN_{{\rm uni}(m,p)}\setminus\{\gamma(a),\gamma(b)\}$.
By Lemma~\ref{lem:levelProperties}, it holds  that $\NN_L\subseteq \NN_{{\rm uni}(M,p)}$, where 
\begin{align}
M:= \max_{z\in\NN_L} \level_L(z).
\end{align}
The definition of $\XX_{{\rm uni}(m,p)}$ yields that $\XX_L\subseteq \XX_{{\rm uni}(M,p)}$.
Moreover, we can rewrite the
additive Schwarz operator as
\begin{align}\label{eq:QQ}
  \PAS =\sum_{m=0}^M\QQ_m\quad\text{with}\quad \QQ_m :=
\sum_{\ell=0}^L \sum_{\substack{i\in\II_\ell\\ \level_\ell(\widetilde z_{\ell,i}) = m}}
\prec_{\ell,i}.
\end{align}
There holds the following type of strengthened Cauchy-Schwarz inequality.
\begin{lemma}\label{lem:strongCS}
  For all $0\leq m\leq M$, $\edual{\QQ_m (\cdot)}{(\cdot)}$ defines a symmetric positive semi-definite bilinear form on $\H^{1/2}(\Gamma)$.
For $k\in\N_0$, it holds  that
  \begin{align}\label{eq:hypscs}
   \edual{\QQ_m {V_{{\rm uni}(k,p)}}}{{V_{{\rm uni}(k,p)}}} 
    \le C_4
    2^{-(m-k)} \enorm{{V_{{\rm uni}(k,p)}}}^2
    \quad\text{for all }{V_{{\rm uni}(k,p)}}\in\XX_{{\rm uni}(k,p)}.
  \end{align}
  The constant $C_4>0$ depends only on $\TT_0, {\widehat \kappa}_{\max}, p, w_{\min}, w_{\max}$ and
  $\gamma$.
\end{lemma}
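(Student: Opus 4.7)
Since each $\prec_{\ell,i}$ is the $\edual{\cdot}{\cdot}$-orthogonal projection onto $\XX_{\ell,i}$, it is itself symmetric and positive semi-definite with respect to $\edual{\cdot}{\cdot}$. These properties are preserved by summation, yielding the first assertion, and in particular
\begin{align*}
\edual{\QQ_m V}{V}=\sum_{\ell=0}^L\sum_{\substack{i\in\II_\ell\\\level_\ell(\widetilde z_{\ell,i})=m}} \enorm{\prec_{\ell,i} V}^2\quad\text{for }V:=V_{{\rm uni}(k,p)}.
\end{align*}
Since $\XX_{\ell,i}$ is spanned by $\overline R_{\ell,i,p}$ alone, the formula for one-dimensional orthogonal projections reduces each summand to
\begin{align*}
\enorm{\prec_{\ell,i} V}^2=\frac{|\edual{V}{\overline R_{\ell,i,p}}|^2}{\enorm{\overline R_{\ell,i,p}}^2}\lesssim |\edual{V}{\overline R_{\ell,i,p}}|^2,
\end{align*}
where the lower bound $\enorm{\overline R_{\ell,i,p}}\gtrsim 1$ follows by combining the equivalence $\enorm{\cdot}\simeq\norm{\cdot}{\H^{1/2}(\Gamma)}$ with the Sobolev-Slobodeckij lower bound of Lemma~\ref{lem:basis scaling} (applied with $\sigma=1/2$), transferred from $[a,b]$ to $\Gamma$ via the bi-Lipschitz estimate~\eqref{eq:bi-Lipschitz}.

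\textbf{Dualizing through $\WW$.} The key idea is to pull a factor of smoothness of $V$ out via the hypersingular operator: from the mapping property $\WW:\H^1(\Gamma)\to L^2(\Gamma)$ (Section~\ref{section:preliminaries}, $\sigma=1$) together with $V\in\XX_{{\rm uni}(k,p)}\subset\H^1(\Gamma)$, one writes $\edual{V}{\overline R_{\ell,i,p}}=\dual{\WW V}{\overline R_{\ell,i,p}}_\Gamma$ for open $\Gamma$; the closed case picks up the extra term $\dual{V}{1}_{\partial\Omega}\dual{\overline R_{\ell,i,p}}{1}_{\partial\Omega}$, treated analogously using $|\dual{\overline R_{\ell,i,p}}{1}_{\partial\Omega}|\lesssim\overline h_{{\rm uni}(m)}$. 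Localizing the integral to $\supp\overline R_{\ell,i,p}$ and applying Cauchy--Schwarz gives
\begin{align*}
|\edual{V}{\overline R_{\ell,i,p}}|^2\lesssim \overline h_{{\rm uni}(m)}\,\norm{\WW V}{L^2(\supp\overline R_{\ell,i,p})}^2,
\end{align*}
where I used $\norm{\overline R_{\ell,i,p}}{L^2(\Gamma)}\lesssim|\supp\overline R_{\ell,i,p}|^{1/2}\simeq\overline h_{{\rm uni}(m)}^{1/2}$. The support-size estimate rests on the observation that $\level_\ell(\widetilde z_{\ell,i})=m$ forces at least one element of $\supp\overline R_{\ell,i,p}$ to have generation $m$, which the uniform mesh-ratio bound~\eqref{eq:kappamax} propagates to all of the $p+1$ elements in the support (whose arclengths are then $\simeq\overline h_{{\rm uni}(m)}$ by Lemma~\ref{lem:levelProperties}).

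\textbf{Finite overlap and inverse inequality.} To sum over $(\ell,i)$ with $\level_\ell(\widetilde z_{\ell,i})=m$, I would establish bounded overlap of the supports at scale $\overline h_{{\rm uni}(m)}$. By Lemma~\ref{lem:levelProperties} every such node $\widetilde z_{\ell,i}$ lies in $\NN_{{\rm uni}(m)}$, whose points are quasi-uniformly separated at scale $\overline h_{{\rm uni}(m)}$; by Lemma~\ref{lem:li set} each node is attached to at most $C_2$ pairs $(\ell,i)$; and each support has diameter $\lesssim\overline h_{{\rm uni}(m)}$ by the previous step. Combined, these give
\begin{align*}
\sum_{(\ell,i)}\norm{\WW V}{L^2(\supp\overline R_{\ell,i,p})}^2\lesssim\norm{\WW V}{L^2(\Gamma)}^2\lesssim\norm{V}{\H^1(\Gamma)}^2.
\end{align*}
Finally, I would apply the inverse inequality~\eqref{eq:inverseIneq2} with $\sigma=1/2$ on the uniform mesh $\KK_{{\rm uni}(k,p)}$, combined with $\enorm{\cdot}\simeq\norm{\cdot}{\H^{1/2}(\Gamma)}$, to obtain $\norm{V}{\H^1(\Gamma)}\lesssim \overline h_{{\rm uni}(k)}^{-1/2}\enorm{V}$. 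The identity $\overline h_{{\rm uni}(m)}/\overline h_{{\rm uni}(k)}=2^{-(m-k)}$ then delivers the claimed bound.

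\textbf{Main obstacle.} The most delicate step is the bounded-overlap count: one must merge the purely combinatorial bound of Lemma~\ref{lem:li set} (pairs per node) with the quasi-uniform spacing of level-$m$ nodes from Lemma~\ref{lem:levelProperties}, and then pass from node-overlap to support-overlap using the mesh-ratio assumption, all while the supports themselves live in possibly different meshes $\TT_\ell$. Every remaining step is either routine (duality, Cauchy--Schwarz, norm equivalences) or a direct invocation of an already established inverse estimate, scaling lemma, or mapping property.
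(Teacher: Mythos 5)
Your proposal is correct and follows essentially the same route as the paper's proof: the one-dimensional projection formula, the lower bound $\enorm{\overline R_{\ell,i,p}}\gtrsim 1$ via Lemma~\ref{lem:basis scaling} with $\sigma=1/2$ and bi-Lipschitz transfer, Cauchy--Schwarz with $\norm{\overline R_{\ell,i,p}}{L^2(\Gamma)}\lesssim|\supp(\overline R_{\ell,i,p})|^{1/2}\simeq\overline h_{{\rm uni}(m)}^{1/2}$, bounded overlap of the level-$m$ supports via Lemma~\ref{lem:levelProperties} and Lemma~\ref{lem:li set}, and finally the mapping property $\WW:\H^1(\Gamma)\to L^2(\Gamma)$ combined with the inverse inequality~\eqref{eq:inverseIneq2} to produce the factor $\overline h_{{\rm uni}(m)}/\overline h_{{\rm uni}(k)}=2^{-(m-k)}$. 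Only take care, for closed $\Gamma=\partial\Omega$, to carry the extra term $\dual{V}{1}_{\partial\Omega}\dual{\overline R_{\ell,i,p}}{1}_{\partial\Omega}$ through the final summation (it contributes $\lesssim\overline h_{{\rm uni}(m)}^2\norm{V}{L^2(\Gamma)}^2$ per pair and hence $\lesssim 2^{-(m-k)}\enorm{V}^2$ in total), exactly as the paper's proof does.
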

\begin{proof}
Symmetry and positive semi-definiteness follow by the symmetry and positive semi-definiteness of the one-dimensional projectors $\prec_{\ell,i}$.
To see \eqref{eq:hypscs}, we only consider closed $\Gamma=\partial\Omega$ and split the proof into two steps. For open $\Gamma\subsetneqq \partial\Omega$ the proof works analogously.  
\\
\noindent
{\bf Step 1:} 
Let $\ell\in\{0,\dots,L\}$ and $i\in\II_\ell$ with  $\level_\ell(\widetilde z_{\ell,i}) = m$. 
We want to estimate $\edual{\prec_{\ell,i} {V_{{\rm uni}(k,p)}}}{{V_{{\rm uni}(k,p)}}}$.
   From the definition \eqref{eq:Pelli} of $\prec_{\ell,i}$,  we infer that
   \begin{align}\label{eq:proof pli}
    \edual{\prec_{\ell,i} {V_{{\rm uni}(k,p)}}}{{V_{{\rm uni}(k,p)}}} =
\frac{\edual{V_{{\rm uni}(k,p)}}{ \overline R_{\ell,i,p}}^2}{\enorm{ \overline R_{\ell,i,p}}^2}.
  \end{align}
 Lipschitz continuity of $\gamma$ gives that $|\widehat R_{\ell,i,p}|_{H^{1/2}(\supp(\widehat R_{\ell,i,p}))}\lesssim|{R}_{\ell,i,p}|_{H^{1/2}(\supp({R}_{\ell,i,p}))}\lesssim|{\overline R}_{\ell,i,p}|_{H^{1/2}(\Gamma)}$.
Hence, Lemma~\ref{lem:basis scaling} with $\sigma=1/2$ shows that $1\lesssim|\overline R_{\ell,i,p}|_{H^{1/2}(\Gamma)}$.
This implies that
\begin{align*}
\norm{ \overline R_{\ell,i,p}}{L^2(\Gamma)}\lesssim  |\supp( \overline R_{\ell,i,p})|^{1/2}\lesssim  |\supp( \overline R_{\ell,i,p})|^{1/2}|\overline{R}_{\ell,i,p}|_{H^{1/2}(\Gamma)}\lesssim|\supp  (\overline R_{\ell,i,p})|^{1/2} \enorm{
    \overline R_{\ell,i,p}}.
    \end{align*} 
With the Cauchy-Schwarz inequality and
  $|\supp(  \overline R_{\ell,i,p})| \simeq \overline h_{{\rm uni}(m)}$ (Lemma~\ref{lem:levelProperties}), this gives that
  \begin{align}\label{eq:proof CSin}
  \begin{split}
&     \edual{\prec_{\ell,i} {V_{{\rm uni}(k,p)}}}{{V_{{\rm uni}(k,p)}}}\lesssim\frac{\dual{\hyp{V_{{\rm uni}(k,p)}}}{ \overline R_{\ell,i,p}}_\Gamma^2}{\enorm{ \overline R_{\ell,i,p}}^2}+\frac{\dual{V_{{\rm uni}(k,p)}}{1}_\Gamma^2\dual{ \overline R_{\ell,i,p}}{1}_\Gamma^2}{\enorm{ \overline R_{\ell,i,p}}^2}
    \\&\quad\lesssim |\supp( \overline R_{\ell,i,p})| \big(\norm{\hyp {V_{{\rm uni}(k,p)}}}{L^2(\supp( \overline R_{\ell,i,p}))}^2+ |\supp( \overline R_{\ell,i,p})|\norm{V_{{\rm uni}(k,p)}}{L^2(\Gamma)}^2\big)\\
    &\quad\lesssim \frac{\overline h_{{\rm uni}(m)}}{\overline h_{{\rm uni}(k)}} \big(
    \norm{\overline h_{{\rm uni}(k)}^{1/2}\hyp {V_{{\rm uni}(k,p)}}}{L^2(\supp( \overline R_{\ell,i,p}))}^2+|\supp( \overline R_{\ell,i,p})|\norm{V_{{\rm uni}(k,p)}}{L^2(\Gamma)}^2\big)\\
    &\quad= 2^{-(m-k)}\big( \norm{\overline h_{{\rm uni}(k)}^{1/2}\hyp {V_{{\rm uni}(k,p)}}}{L^2(\supp( \overline R_{\ell,i,p}))}^2+|\supp( \overline R_{\ell,i,p})|\norm{V_{{\rm uni}(k,p)}}{L^2(\Gamma)}^2\big).
    \end{split}
  \end{align}
\\
\noindent
{\bf Step 2:}
We stress that the choice \eqref{eq:zli} of $\widetilde{z}_{\ell,i}$ and \eqref{unifPatch} show that
  \begin{align*}
  \supp( \overline R_{\ell,i,p}) \stackrel{\eqref{eq:zli}}{\subseteq} \omega_{\ell-1}^{p+1}(\widetilde{z}_{\ell,i})\subseteq\omega_{\ell-1}^{2p+1}(\widetilde{z}_{\ell,i})\stackrel{\eqref{unifPatch}}{\subseteq}
 \omega_{{\rm uni}(m)}^{{C_3}}(\widetilde{z}_{\ell,i}).
  \end{align*}
  Thus, the definition of $\QQ_m$ and Step~1 yield that
  \begin{align*}
    &\edual{\QQ_m {V_{{\rm uni}(k,p)}}}{{V_{{\rm uni}(k,p)}}} = \sum_{\ell=0}^L \sum_{\substack{i\in\II_\ell\\ \level_\ell(\widetilde{z}_{\ell,i}) = m}}
 \edual{\prec_{\ell,i} {V_{{\rm uni}(k,p)}}}{{V_{{\rm uni}(k,p)}}} \\
  & \quad\stackrel{\eqref{eq:proof CSin}}{\lesssim} 2^{-(m-k)} \sum_{\ell=0}^L \sum_{\substack{i\in\II_\ell\\ \level_\ell(\widetilde{z}_{\ell,i}) = m}}\Big(
    \norm{\overline h_{{\rm uni}(k)}^{1/2} \hyp {V_{{\rm uni}(k,p)}}}{L^2(\omega_{{\rm uni}(m)}^{{C_3}}(\widetilde{z}_{\ell,i}))}^2+| \omega_{{\rm uni}(m)}^{{C_3}}(\widetilde{z}_{\ell,i})|\norm{V_{{\rm uni}(k,p)}}{L^2(\Gamma)}^2\Big)\\
    &\quad\stackrel{\eqref{eq:li set}}{=}2^{-(m-k)}\sum_{z\in \NN_L} \sum_{(\ell,i)\in\ZZ_m(z)} \Big(
    \norm{\overline h_{{\rm uni}(k)}^{1/2} \hyp {V_{{\rm uni}(k,p)}}}{L^2(\omega_{{\rm uni}(m)}^{{C_3}}(z))}^2+| \omega_{{\rm uni}(m)}^{{C_3}}(z)|\norm{V_{{\rm uni}(k,p)}}{L^2(\Gamma)}^2\Big).
  \end{align*}
If $z\in\NN_L$ and $(\ell,i)\in\ZZ_m(z)$, it follows $z\in\NN_\ell$ with $\level_\ell(z) = m$.
Lemma~\ref{lem:levelProperties} implies that  $z\in\NN_{{\rm uni}(m)}$. 
 Hence, we can replace in the upper sum $\NN_L$ by $\NN_L\cap \NN_{{\rm uni}(m)}$.
With Lemma \ref{lem:li set}, we further see that
  \begin{align*}
&\sum_{z\in \NN_L\cap \NN_{{\rm uni}(m)}} \sum_{(\ell,i)\in\ZZ_m(z)}\Big( 
    \norm{\overline h_{{\rm uni}(k)}^{1/2} \hyp {V_{{\rm uni}(k,p)}}}{L^2(\omega_{{\rm uni}(m)}^{{C_3}}(z))}^2 +| \omega_{{\rm uni}(m)}^{{C_3}}(z)|\norm{V_{{\rm uni}(k,p)}}{L^2(\Gamma)}^2\Big)\\
    &\quad\lesssim \sum_{z\in\NN_{{\rm uni}(m)}} 
    \Big(\norm{\overline h_{{\rm uni}(k)}^{1/2} \hyp {V_{{\rm uni}(k,p)}}}{L^2(\omega_{{\rm uni}(m)}^{{C_3}}(z))}^2+| \omega_{{\rm uni}(m)}^{{C_3}}(z)|\norm{V_{{\rm uni}(k,p)}}{L^2(\Gamma)}^2\Big)\\
    &\quad \lesssim \norm{\overline h_{{\rm uni}(k)}^{1/2} \hyp {V_{{\rm uni}(k,p)}}}{L^2(\Gamma)}^2+\norm{V_{{\rm uni}(k,p)}}{L^2(\Gamma)}^2.
  \end{align*}
  Note that boundedness $\hyp: H^1(\Gamma)\to L^2(\Gamma)$ as well as the inverse inequality \eqref{eq:inverseIneq2}  prove~that
  \begin{align*}
    \norm{\overline h_{{\rm uni}(k)}^{1/2} \hyp {V_{{\rm uni}(k,p)}}}{L^2(\Gamma)}^2 \lesssim \overline h_{{\rm uni}(k)}\norm{V_{{\rm uni}(k,p)}}{H^1(\Gamma)}^2\lesssim \norm{{V_{{\rm uni}(k,p)}}}{H^{1/2}(\Gamma)}^2.
  \end{align*}
Putting the latter three inequalities together shows that
  \begin{align*}
    \edual{\QQ_m {V_{{\rm uni}(k,p)}}}{{V_{{\rm uni}(k,p)}}} \lesssim 2^{-(m-k)} \big(\norm{{V_{{\rm uni}(k,p)}}}{H^{1/2}(\Gamma)}^2 +\norm{{V_{{\rm uni}(k,p)}}}{L^{2}(\Gamma)}^2\big)
    \simeq 2^{-(m-k)} \enorm{{V_{{\rm uni}(k,p)}}}^2.
  \end{align*}
This finishes the proof.
 \end{proof}

The rest of the proof of the upper bound in Theorem \ref{thm:PAS} follows essentially as in \cite[Lemma~2.8]{transtep96} and is only given for completeness; see also~\cite[Section~4.6]{ffps}.

\begin{proof}[Proof of upper bound in~\eqref{eq:prop:PAS}]
For $k\in\N_0$ let $\gal_{{\rm uni}(k,p)} : \H^{1/2}(\Gamma) \to \XX_{{\rm uni}(k,p)}$ denote the Galerkin projection onto
$\XX_{{\rm uni}(k,p)}$ with respect to the scalar product $\edual\cdot\cdot$, i.e.,
\begin{align}
  \edual{\gal_{{\rm uni}(k,p)} v}{V_{{\rm uni}(k,p)}} = \edual{v}{V_{{\rm uni}(k,p)}} \quad\text{for all }
v\in\H^{1/2}(\Gamma),V_{{\rm uni}(k,p)}\in\XX_{{\rm uni}(k,p)}.
\end{align}
Note that $\gal_{{\rm uni}(k,p)}$ is the orthogonal projection onto $\XX_{{\rm uni}(k,p)}$ with respect to
the energy norm $\enorm\cdot$. 
Moreover, we set $\gal_{{\rm uni}(-1,p)} := 0$.
The proof is split into three steps.
\\
\noindent
{\bf Step 1:} 
Let $V_L\in\XX_L\subseteq\XX_{{\rm uni}(M,p)}$.
Lemma~\ref{lem:levelProperties} and the boundedness of the local mesh-ratio by $\widehat \kappa_{\max}$ yield the existence of a constant $C\in \N_0$, which depends only on $\TT_0,\widehat \kappa_{\max},\gamma$, and $p$, such that $\NN_\ell\cap\omega_{\ell-1}^{p+2}({z})\subseteq \NN_{{\rm uni}(m+C,p)}$ for all nodes $z\in\NN_\ell$ with $\level_\ell(z)=m$.
Lemma~\ref{lem:properties for B-splines}~\eqref{item:spline basis} hence proves that
$\XX_{\ell,i}\subseteq\XX_{{\rm uni}(m+C,p)}$ for all $m\in\{0,\dots,M\},\ell\in\{0,\dots,L\},$ and  $i\in\II_\ell$ with $\level_\ell(\widetilde{z}_{i,\ell})=m$. 
Therefore, the range of  $\QQ_m$ is a subspace of  $\XX_{{\rm uni}(m+C,p)}$. 
This shows that
\begin{align}\label{eq:qmvl}
\edual{\QQ_m V_L}{V_L}
 = \edual{\QQ_m V_L}{\gal_{{\rm uni}(m+C,p)}V_L}.
\end{align}
\\
\noindent
{\bf Step 2:}
In Lemma~\ref{lem:strongCS}, we saw that $\edual{\QQ_m(\cdot)}{(\cdot)}$ defines a symmetric positive semi-definite bilinear form and hence satisfies  a Cauchy-Schwarz inequality.
This and \eqref{eq:qmvl} yield that
\begin{align*}
&  \edual{\QQ_m V_L}{V_L}
 = \edual{\QQ_m V_L}{\gal_{{\rm uni}(m+C,p)}V_L}
 = \sum_{k=0}^{m+C}\edual{\QQ_mV_L}{(\gal_{{\rm uni}(k,p)}-\gal_{{\rm uni}(k-1,p)})V_L}
 \\&\quad\le
 \sum_{k=0}^{m+C}\edual{\QQ_m V_L}{V_L}^{1/2}\edual{\QQ_m(\gal_{{\rm uni}(k,p)}-\gal_{{\rm uni}(k-1,p)})V_L}{(\gal_{{\rm uni}(k,p)}-\gal_{{\rm uni}(k-1,p)})V_L}^{1/2}.
\end{align*}
For the second scalar product, we apply Lemma~\ref{lem:strongCS} and obtain that
\begin{align*}
 &\edual{\QQ_m(\gal_{{\rm uni}(k,p)}-\gal_{{\rm uni}(k-1,p)})V_L}{(\gal_{{\rm uni}(k,p)}-\gal_{{\rm uni}(k-1,p)})V_L}\\
 &\quad \lesssim 
2^{-(m-k)}\,\enorm{(\gal_{{\rm uni}(k,p)}-\gal_{{\rm uni}(k-1,p)})V_L}^2
 = 
 2^{-(m-k)}\edual{(\gal_{{\rm uni}(k,p)}-\gal_{{\rm uni}(k-1,p)})^2 V_L}{V_L}.
\end{align*}
Note that $(\gal_{{\rm uni}(k,p)}-\gal_{{\rm uni}(k-1,p)})^2=\gal_{{\rm uni}(k,p)}-\gal_{{\rm uni}(k-1,p)}$, since $\gal_{{\rm uni}(k,p)}-\gal_{{\rm uni}(k-1,p)}$ is again an orthogonal projection.
\\
\noindent
{\bf Step 3:}
With the  representation~\eqref{eq:QQ} of $\PAS$, the two inequalities from Step~2, and the
Young inequality, we infer for all $\delta>0$  that
\begin{align*}
  \edual{\PAS V_L}{V_L} &\stackrel{\eqref{eq:QQ}}{=} \sum_{m=0}^M \edual{\QQ_m V_L}{V_L} 
  \lesssim  \frac\delta{2} \sum_{m=0}^M \sum_{k=0}^{m+C}
  2^{-(m-k)/2} \edual{\QQ_m  V_L}{V_L}\\
& \hspace{35mm}+ \frac{\delta^{-1}}2 \sum_{m=0}^M\sum_{k=0}^{m+C}
  2^{-(m-k)/2} \edual{(\gal_{{\rm uni}(k,p)}-\gal_{{\rm uni}(k-1,p)})V_L}{V_L}.
\end{align*}
We abbreviate  $\sum_{k=-C}^\infty
2^{-k/2} =: K<\infty$. Changing the summation
indices in the second sum, we see with $V_L\in\XX_L\subseteq\XX_{{\rm uni}(M,p)}\subseteq \XX_{{\rm uni}(M+C,p)}$ that
\begin{align*}
  \edual{\PAS V_L}{V_L}
 &\lesssim K\frac\delta{2} \, \sum_{m=0}^M \edual{\QQ_m V_L}{V_L} \\&\quad+ 
 \frac{\delta^{-1}}2 \, \sum_{k=0}^{M+C} \sum_{m=\max(k-C,0)}^{M} 2^{-(m-k)/2} 
 \edual{(\gal_{{\rm uni}(k,p)}-\gal_{{\rm uni}(k-1,p)})V_L}{V_L} \\
  &\leq K\frac\delta{2} \, \sum_{m=0}^M  \edual{\QQ_m V_L}{V_L} + 
  K \frac{\delta^{-1}}2 \,\sum_{k=0}^{M+C}
  \edual{(\gal_{{\rm uni}(k,p)}-\gal_{{\rm uni}(k-1,p)})V_L}{V_L} \\
  &= K\frac\delta{2} \, \sum_{m=0}^M  \edual{\QQ_m V_L}{V_L} + K \frac{\delta^{-1}}2
\edual{\gal_{{\rm uni}(M+C,p)}V_L}{V_L}\\
&\stackrel{\eqref{eq:QQ}}= K\frac\delta{2} \edual{\PAS V_L}{V_L} + K \frac{\delta^{-1}}2 \edual{V_L}{V_L}.
\end{align*}
Choosing $\delta>0$ sufficiently small and absorbing the first-term on the right-hand 
side on the left, we prove the upper bound in~\eqref{eq:prop:PAS}.
\end{proof}


\section{Local multilevel diagonal preconditioner for the weakly-singular case}
\label{section:weakly precond} 
Finally, we generalize the results of the previous sections to the weakly-singular integral equation.
The main tool in the following is  Maue's formula (see, e.g. \cite{hilbert})
\begin{align}\label{eq:Maue}
\dual{\WW u}{v}_\Gamma=\dual{\VV\partial_\Gamma u}{\partial_\Gamma v}_\Gamma\quad\text{for all}\quad u,v\in\H^1(\Gamma).
\end{align}
For similar proofs in the case of piecewise constant ansatz functions, we refer to \cite{transtep96} (uniform meshes) resp.\ \cite{ffpsfembem} (adaptive meshes).
Throughout this section, let $(\KK_\ell)_{\ell\in\N_0}$ be a sequence of refined knot vectors, i.e., $\KK_\ell,\KK_{\ell+1}\in\K$ with $\KK_{\ell+1}\in\refine(\KK_\ell)$, 
and let $L\in\N_0$.
For each $\Psi_L\in\YY_L$, we consider the unique decomposition
\begin{align}
\Psi_L=\Psi_L^{00}+\Psi_L^{0},
\quad\text{where}\quad
\Psi_L^{00}:=\dual{\Psi_L}{1}_\Gamma/|\Gamma|\quad\text{and}\quad\Psi_L^0:=\Psi_L-\Psi_L^{00}.
\end{align}

Note that 
\begin{align}
\Psi_L^{00}\in\YY^{00}:=\linhull\{1\}\quad\text{and}\quad \Psi_L^0\in\YY_L^0:=\set{\Psi_L\in\YY_L}{\dual{\Psi_L}{1}_\Gamma=0}.
\end{align}
With hidden constants, which depend only on $\Gamma$, it holds that
 \begin{align}\label{eq:00* stable}
 \norm{\Psi_L^{00}}{\VV}\lesssim \norm{\Psi_L}{\VV}\quad\text{and}\quad \norm{\Psi_L^0}{\VV}\lesssim\norm{\Psi_L}{\VV}.
 \end{align}
Recall the spaces $\XX_L, \widetilde \XX_\ell$ and  $\XX_{\ell,i}$ from Section \ref{section:precond}. For $\ell\in\{0,\dots,L\}$, set
\begin{align}\label{eq:tilY}
 \widetilde\YY_{\ell}^0:=\partial_\Gamma\widetilde\XX_{\ell}\stackrel{\eqref{eq:reddecomp}}=\sum_{i\in\II_\ell}\YY_{\ell,i}^0\quad\text{with}\quad\YY_{\ell,i}^0:=\partial_\Gamma\XX_{\ell,i}.
\end{align}
Recall that we only consider non-rational splines in the weakly-singular case; see Section~\ref{section:igabem}. 
For rational splines, the following lemma is in general false.
\begin{lemma}\label{lem:Y*=X'}
It holds that
\begin{align}\label{eq:Y*=X'}
\YY_L^0=\partial_\Gamma\XX_L.
\end{align}
For closed  $\Gamma=\partial\Omega$, we even have that
\begin{align}\label{eq:Y*=X' 2}
\YY_L^0=\partial_\Gamma \XX_L^0,\quad \text{where}\quad \XX_L^0:=\set{V_L\in\XX_L}{\dual{V_L}{1}_\Gamma=0}.
\end{align}
\end{lemma}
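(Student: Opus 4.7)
I would prove both inclusions of $\YY_L^0=\partial_\Gamma\XX_L$ by first showing $\partial_\Gamma\XX_L\subseteq\YY_L^0$ directly, and then concluding equality via a dimension count. The refined identity in the closed case follows by restricting the first equality to the codimension-one subspace $\XX_L^0$.

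\textbf{Step 1 (the inclusion $\partial_\Gamma\XX_L\subseteq\YY_L^0$).} Let $V_L\in\XX_L$ and write $V_L=\widehat V_L\circ\gamma^{-1}$ with $\widehat V_L\in\widehat{\mathcal{S}}^p(\widehat\KK_L)$. Because the weakly-singular ansatz uses trivial weights (non-rational splines), the B-spline derivative identity of Lemma~\ref{lem:properties for B-splines}~\eqref{item:derivative of splines} applied term by term to the expansion $\widehat V_L=\sum_i a_i\widehat B_{L,i,p}$ gives $\widehat V_L'\in\widehat{\mathcal{S}}^{p-1}(\widehat\KK_L)$, and hence $\partial_\Gamma V_L\in\YY_L=\mathcal{S}^{p-1}(\KK_L)$. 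To check the zero-integral property, I would apply the fundamental theorem of calculus to obtain
\begin{align*}
\dual{\partial_\Gamma V_L}{1}_\Gamma=\int_\Gamma\partial_\Gamma V_L\,d\Gamma=V_L(\gamma(b-))-V_L(\gamma(a))=0,
\end{align*}
where the last equality uses the defining conditions of $\XX_L$ from~\eqref{eq:hypsing X0}: the two endpoint values coincide in the closed case ($\Gamma=\partial\Omega$), while both vanish in the open case ($\Gamma\subsetneqq\partial\Omega$). Thus $\partial_\Gamma V_L\in\YY_L^0$.

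\textbf{Step 2 (dimension count).} From the explicit bases \eqref{eq:hypsing basis} and \eqref{eq:weaksing basis} one reads off $\dim\XX_L=N_L-1$ (closed) or $\dim\XX_L=N_L-2$ (open), together with $\dim\YY_L=N_L-1$ in both cases. The partition of unity (Lemma~\ref{lem:properties for B-splines}~\eqref{item:B-splines partition}) gives $1\in\YY_L$ with $\dual{1}{1}_\Gamma=|\Gamma|\neq 0$, so the functional $\Psi\mapsto\dual{\Psi}{1}_\Gamma$ is surjective onto $\R$ and $\dim\YY_L^0=N_L-2$. The kernel of $\partial_\Gamma$ on $\XX_L$ consists of the constants inside $\XX_L$, which is one-dimensional in the closed case (the NURBS partition of unity places $1\in\XX_L$, and periodicity is automatic) and trivial in the open case (a nonzero constant violates the homogeneous trace condition). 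The rank-nullity theorem therefore yields $\dim\partial_\Gamma\XX_L=N_L-2=\dim\YY_L^0$ in both cases, so Step~1 forces equality \eqref{eq:Y*=X'}. For the closed-case refinement \eqref{eq:Y*=X' 2}, I observe that $\XX_L^0$ is the kernel inside $\XX_L$ of the surjective linear functional $V_L\mapsto\dual{V_L}{1}_\Gamma$, hence $\dim\XX_L^0=N_L-2$; moreover $\partial_\Gamma|_{\XX_L^0}$ is injective because the one-dimensional kernel $\linhull\{1\}$ of $\partial_\Gamma|_{\XX_L}$ meets $\XX_L^0$ trivially. The dimension match with $\YY_L^0$ together with Step~1 then delivers \eqref{eq:Y*=X' 2}.

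\textbf{Main obstacle.} The delicate point is Step~1, specifically the identification $\partial_\Gamma\XX_L\subseteq\YY_L$. It is here that the exclusion of genuine rational weights in the weakly-singular ansatz space is essential: for NURBS of positive weight, the quotient rule applied to $\widehat R_{L,i,p}=w_{L,i}\widehat B_{L,i,p}/\widehat w$ produces a squared denominator, so the derivative no longer belongs to a pullback spline space of degree $p-1$. This is precisely the mechanism behind the paper's remark that the assertion fails in general for rational splines.
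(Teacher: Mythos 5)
Your proof is correct and follows essentially the same route as the paper's: the inclusion $\partial_\Gamma\XX_L\subseteq\YY_L^0$ via the B-spline derivative formula of Lemma~\ref{lem:properties for B-splines}~\eqref{item:derivative of splines} together with the fundamental theorem of calculus, combined with a dimension count based on the explicit bases \eqref{eq:hypsing basis}--\eqref{eq:weaksing basis} and the one- resp.\ zero-dimensional kernel of $\partial_\Gamma$ on $\XX_L$. The only cosmetic difference is in \eqref{eq:Y*=X' 2}, where the paper simply uses $\XX_L=\linhull\{1\}+\XX_L^0$ (so that $\partial_\Gamma\XX_L^0=\partial_\Gamma\XX_L$) instead of your injectivity-plus-dimension argument.
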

 \begin{proof}
Since $\Gamma$ is connected, the kernel of $\partial_\Gamma:\XX_L\to L^2(\Gamma)$ is one-dimensional for $\Gamma=\partial\Omega$ and zero-dimensional for  $\Gamma\subsetneqq\partial\Omega$.
Together with \eqref{eq:hypsing basis} and \eqref{eq:weaksing basis}, linear algebra yields  for $\Gamma=\partial\Omega$ that $N_L-1=\dim\XX_L=\dim \YY_L$, $N_L-2=\dim\partial_\Gamma\XX_L=\dim \XX_L^0=\dim\YY_L^0$ and for $\Gamma\subsetneqq\partial\Omega$  that $N_L-1=\dim\YY_L$, $N_L-2=\dim\XX_L=\dim\partial_\Gamma\XX_L=\dim\YY_L^0$.
To see \eqref{eq:Y*=X'}, it thus remains to prove that $\partial_\Gamma\XX_L\subseteq \YY_L^0$.
The inclusion $\partial_\Gamma\XX_L\subseteq\YY_L$ follows directly from Lemma~\ref{lem:properties for B-splines} \eqref{item:derivative of splines}.
We stress that for any $v\in\H^{1/2}(\Gamma)$, it holds that $\dual{\partial_\Gamma v}{1}_\Gamma=0$. Thus, any function in $\partial_\Gamma\XX_L$ has vanishing integral mean, which  concludes the proof of \eqref{eq:Y*=X'}. Finally, \eqref{eq:Y*=X' 2} follows immediately from the fact that $\XX_L=\linhull\{1\}+\XX_L^0$. 
 \end{proof}
Define the orthogonal projections on $\YY^{00}$ resp. $\YY_{\ell,i}$ via
\begin{align}
\begin{split}
\edualV{\prec^{00} \chi}{\Psi^{00}}=\edualV{\chi}{\Psi^{00}}\quad\text{for all }\chi\in\H^{-1/2}(\Gamma), \Psi^{00}\in\YY^{00},\\
  \edualV{\prec_{\ell,i}^0 \chi}{\Psi_{\ell,i}^0} = \edualV{\chi}{\Psi_{\ell,i}^0} \quad\text{for all } \chi\in \H^{-1/2}(\Gamma),
  \Psi_{\ell,i}^0\in\YY_{\ell,i}^0.
  \end{split}
\end{align}
With Lemma \ref{lem:Y*=X'}, we see the decomposition
 \begin{align}
 \YY_L=\YY^{00}+\YY^0_L\stackrel{\eqref{eq:Y*=X'}}{=}\YY^{00}+\partial_\Gamma\XX_L\stackrel{\eqref{eq:space decomp}}{=}\YY^{00}+\sum_{\ell=0}^L\widetilde\YY_{\ell}^0\stackrel{\eqref{eq:tilY}}{=}\YY^{00}+\sum_{\ell=0}^L\sum_{i\in\II_\ell} \YY_{\ell,i}^0
 \end{align}
with the corresponding additive Schwarz operator 
\begin{align}\label{eq:PASV}
\PASV:=\prec^{00}+\sum_{\ell=0}^L\sum_{i\in\II_\ell}\prec_{\ell,i}^0.
\end{align}
\begin{theorem}\label{thm:PASV}
  The additive Schwarz operator $\PASV : \H^{-1/2}(\Gamma)\to \YY_L$ satisfies that
  \begin{align}\label{eq:prop:PASV}
    \evminV \enormV{\Psi_L}^2 \leq \edualV{\PASV \Psi_L}{\Psi_L} \leq \evmaxV \enormV{\Psi_L}^2 \quad\text{for all } \Psi_L\in \YY_L,
  \end{align}
where the constants $\evminV,\evmaxV>0$ depend only on $\TT_0$, $\widehat{\kappa}_{\max}, p, w_{\min}, w_{\max}$, and $\gamma$. 
\end{theorem}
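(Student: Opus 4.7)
The plan is to split every $\Psi_L \in \YY_L$ as $\Psi_L = \Psi_L^{00} + \Psi_L^0$, handle the scalar piece via the rank-one projection $\prec^{00}$ together with the stability~\eqref{eq:00* stable}, and reduce the remaining analysis on $\YY_L^0$ to the hypersingular case via Maue's formula~\eqref{eq:Maue} and the identification $\YY_L^0 = \partial_\Gamma \XX_L$ from Lemma~\ref{lem:Y*=X'}. Combining Maue's formula with the definitions of the energy scalar products, I obtain for all $u,v \in \H^1(\Gamma)$
\begin{align*}
\edualV{\partial_\Gamma u}{\partial_\Gamma v} = \edual{u}{v} - \dual{u}{1}_\Gamma \dual{v}{1}_\Gamma
\end{align*}
in the closed case (the correction term is absent when $\Gamma$ is open). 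In particular, $\enormV{\partial_\Gamma v} \le \enorm{v}$ holds in general, with equality whenever $v \in \XX_L^0$ (closed case) or $v \in \XX_L$ (open case).

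For the lower bound, given $\Psi_L^0 \in \YY_L^0$, I would use Lemma~\ref{lem:Y*=X'} to lift to $U_L \in \XX_L^0$ (closed case) or $U_L \in \XX_L$ (open case) with $\partial_\Gamma U_L = \Psi_L^0$ and $\enorm{U_L} = \enormV{\Psi_L^0}$. Applying the lower bound of Theorem~\ref{thm:PAS} yields a stable decomposition $U_L = \sum_{\ell,i} V_L^{\ell,i}$ with $V_L^{\ell,i} \in \XX_{\ell,i}$ and $\sum_{\ell,i} \enorm{V_L^{\ell,i}}^2 \lesssim \enorm{U_L}^2$. Setting $\Psi_{\ell,i}^0 := \partial_\Gamma V_L^{\ell,i} \in \YY_{\ell,i}^0$ and using $\enormV{\partial_\Gamma V} \le \enorm{V}$ gives a stable decomposition of $\Psi_L^0$, which combined with the $\Psi_L^{00}$-piece via~\eqref{eq:00* stable} produces a stable decomposition of $\Psi_L$ itself. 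The standard abstract additive Schwarz theory, e.g.\ \cite[Lemma~3.1]{zhang92}, then delivers the lower bound in~\eqref{eq:prop:PASV}.

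For the upper bound, I would follow the blueprint of Section~\ref{sec:PAS:ub}: group the projectors by level,
\begin{align*}
\PASV = \prec^{00} + \sum_{m=0}^M \QQ_m^V, \qquad \QQ_m^V := \sum_{\substack{\ell,i \\ \level_\ell(\widetilde z_{\ell,i}) = m}} \prec_{\ell,i}^0,
\end{align*}
and establish the $\VV$-analog of the strengthened Cauchy--Schwarz inequality of Lemma~\ref{lem:strongCS},
\begin{align*}
\edualV{\QQ_m^V \Psi_{{\rm uni}(k)}}{\Psi_{{\rm uni}(k)}} \lesssim 2^{-(m-k)} \enormV{\Psi_{{\rm uni}(k)}}^2
\end{align*}
on an appropriate uniform spline space in $\YY$. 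Once this is in place, the Galerkin-projection telescoping argument of Section~\ref{sec:PAS:ub}, transplanted to a nested sequence of uniform $\YY$-spaces, yields the upper bound; the contribution of $\prec^{00}$ is absorbed at the end, since the orthogonal projection $\prec^{00}$ has operator norm at most one.

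The main obstacle is this strengthened Cauchy--Schwarz for $\QQ_m^V$. Its proof will need an $H^{-1/2}$-scaling estimate for the basis functions $\partial_\Gamma \overline R_{\ell,i,p}$, obtained by combining Lemma~\ref{lem:basis scaling} with Maue's formula, together with the $L^2(\Gamma) \to H^1(\Gamma)$ boundedness of $\VV$ and an inverse estimate for the spline space $\YY$ in negative-order Sobolev norms derived from Proposition~\ref{lem:inverseIneq} via antiderivatives (so that the rank-one correction appearing in the closed case can be controlled independently). The support combinatorics~\eqref{unifPatch} and the cardinality bound of Lemma~\ref{lem:li set} can be reused verbatim.
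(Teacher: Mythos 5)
Your lower bound is essentially the paper's proof: lift $\Psi_L^0$ to $V_L^0\in\XX_L^0$ via Lemma~\ref{lem:Y*=X'}, reuse the stable decomposition from Section~\ref{sec:PAS:lb}, push it forward with $\partial_\Gamma$, control the pieces by Maue's formula~\eqref{eq:Maue} (using $\dual{V_L^0}{1}_\Gamma=0$ so that no correction term appears), and add the rank-one piece via~\eqref{eq:00* stable}. That part is fine.

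The upper bound, however, has a genuine gap. You propose to rebuild the machinery of Section~\ref{sec:PAS:ub} in the weakly-singular setting, but the decisive ingredient --- the strengthened Cauchy--Schwarz inequality for your level operators $\QQ_m^V$ on uniform spline spaces in $\YY$ --- is only announced as ``the main obstacle'' and never proved. The tools you list for it are not available in the paper and would themselves require proofs: Proposition~\ref{lem:inverseIneq} gives inverse estimates only for $\XX_\bullet$ in nonnegative-order norms, so a negative-order inverse estimate for $\YY_{{\rm uni}(k)}$ ``via antiderivatives'' is an unproven auxiliary result, as is the lower scaling bound for $\enormV{\partial_\Gamma\overline B_{\ell,i,p}}$ and the containment of the range of $\QQ_m^V$ in suitable nested uniform $\YY$-spaces needed for the telescoping with Galerkin projections. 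As written, the central estimate of your upper-bound argument is missing. The paper circumvents all of this with a short duality argument: by abstract additive Schwarz theory (\cite[Lemma~3.1]{zhang92}), the already-proven upper bound of Theorem~\ref{thm:PAS} is \emph{equivalent} to the reverse estimate $\norm{V_L}{\WW}^2\lesssim\sum_{\ell,i}\norm{V_{\ell,i}}{\WW}^2$ for \emph{arbitrary} decompositions $V_L=\sum_{\ell,i}V_{\ell,i}$ with $V_{\ell,i}\in\XX_{\ell,i}$. Given an arbitrary decomposition $\Psi_L=\Psi_L^{00}+\sum_{\ell,i}\alpha_{\ell,i}\,\partial_\Gamma\overline B_{\ell,i,p}$, one sets $V_{\ell,i}:=\alpha_{\ell,i}\overline B_{\ell,i,p}$, applies this reverse estimate together with Maue's formula, and controls the rank-one correction $\dual{V_{\ell,i}}{1}_\Gamma^2\lesssim\dual{\WW V_{\ell,i}}{V_{\ell,i}}_\Gamma$ in the closed case using the basis-scaling Lemma~\ref{lem:basis scaling}; since the decomposition was arbitrary, the abstract theory yields the upper bound in~\eqref{eq:prop:PASV} with no new strengthened Cauchy--Schwarz inequality, no level grouping, and no negative-order inverse estimates. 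I recommend replacing your Step~2 by this transfer argument, or else supplying complete proofs of the missing $\VV$-side estimates.
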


 \begin{proof}
We only prove the assertion for closed  $\Gamma=\partial\Omega$. For open  $\Gamma\subsetneqq\partial\Omega$, the proof works analogously with  $\norm{\cdot}{\WW}^2=\dual{\WW(\cdot)}{(\cdot)}_\Gamma$. 

\noindent
{\bf{Step 1:} }
First, we prove the lower bound of \eqref{eq:prop:PASV}.
 We have to find a stable decomposition for any $\Psi_L\in\YY_L$.
 Due to Lemma \ref{lem:Y*=X'}, there exists $V_L^0\in\XX_L^0$ with $\partial_\Gamma V_L^0=\Psi_L^0$.
In Section~\ref{sec:PAS:lb}, we provided a decomposition $V_L^0=\sum_{\ell=0}^L\sum_{i\in\II_\ell} {V_{\ell,i}}$ such that
$ \sum_{\ell=0}^L\sum_{i\in\II_\ell} \norm{V_{\ell,i}}{\WW}^2\lesssim \norm{V_L^0}{\WW}^2.$
 This provides us with a decomposition  
 \begin{align*}
 \Psi_L^0=\partial_\Gamma V_L^0=\sum_{\ell=0}^L\sum_{i\in\II_\ell} \Psi_{\ell,i}^0, \quad\text{where}\quad \Psi_{\ell,i}^0:=\partial_{\Gamma}V_{\ell,i}\in\YY_{\ell,i}^0.
 \end{align*}
 Maue's formula \eqref{eq:Maue} and $\dual{V_L^0}{1}_\Gamma=0$ hence show that
 \begin{align*}
 \sum_{\ell=0}^L\sum_{i\in\II_\ell} \norm{\Psi_{\ell,i}^{0}}{\VV}^2=\sum_{\ell=0}^L\sum_{i\in\II_\ell}\dual{\WW V_{\ell,i}}{V_{\ell,i}}_\Gamma\le \sum_{\ell=0}^L\sum_{i\in\II_\ell}\norm{V_{\ell,i}}{\WW}^2\lesssim\norm{V_L^0}{\WW}^2=\norm{\Psi_L^0}{\VV}^2.
 \end{align*}
With this and \eqref{eq:00* stable}, we  finally conclude that
\begin{align*}
\norm{\Psi_L^{00}}{\VV}^2+\sum_{\ell=0}^L\sum_{i\in\II_\ell} \norm{\Psi_{\ell,i}^0}{\VV}^2\lesssim\norm{\Psi_L}{\VV}^2.
\end{align*}
As in Section~\ref{sec:PAS:lb}, this proves the lower bound.

\noindent
{\bf Step 2:} For the upper bound of \eqref{eq:prop:PASV}, let   $\Psi_L=\Psi_L^0+\Psi_L^{00}\in\YY_L$ with an arbitrary decomposition $\sum_{\ell=0}^L\sum_{i\in\II_\ell}\Psi_{\ell,i}^0=\Psi_L^0$, where $\Psi_{\ell,i}^0\in\YY_{\ell,i}^0$.
In particular, it  holds that $\Psi_{\ell,i}^0=\alpha_{\ell,i} \partial_\Gamma \overline B_{\ell,i,p}$ 
with some $\alpha_{\ell,i}\in\R$.
We define
\begin{align*}
V_L:=\sum_{\ell=0}^L\sum_{i\in \II_\ell}V_{\ell,i}\quad\text{with}\quad V_{\ell,i}:=\alpha_{\ell,i} \overline B_{\ell,i,p}.
 \end{align*}
It is well known from additive Schwarz theory
that the existence of a uniform upper bound in Theorem~\ref{thm:PAS} is equivalent to  
$ \norm{V_L}{\WW}^2\lesssim\sum_{\ell=0}^L\sum_{i\in\II_\ell}\norm{V_{\ell,i}}{\WW}^2$ for all decompositions $V_L=\sum_{\ell=0}^L\sum_{i\in\II_\ell}V_{\ell,i}$; see, e.g.,  \cite[Lemma~3.1]{zhang92}.
Maue's formula~\eqref{eq:Maue} yields that
 \begin{align}\label{eq:psi to v}
 \norm{\Psi_L^0}{\VV}^2=\dual{\WW V_L}{V_L}_{\Gamma}\le \norm{V_{L}}{\WW}^2 \lesssim\sum_{\ell=0}^L\sum_{i\in\II_\ell}\norm{V_{\ell,i}}{\WW}^2.
 \end{align}
With $|\supp( V_{\ell,i})|=|\supp(\overline B_{\ell,i,p})|\lesssim1$, Lemma \ref{lem:basis scaling} shows  that
\begin{align}\label{eq:vli 1}
\dual{V_{\ell,i}}{1}_\Gamma^2\lesssim \alpha_{\ell,i}^2|\supp( V_{\ell,i})|^2\lesssim\alpha_{\ell,i}^2\stackrel{\eqref{eq:basis scaling}}\lesssim\alpha_{\ell,i}^2 |\widehat B_{\ell,i,p}|_{H^{1/2}(\supp( \widehat B_{\ell,i,p}))}^2\lesssim\alpha_{\ell,i}^2 |\widehat B_{\ell,i,p}|_{H^{1/2}(a,b)}^2.
\end{align}
Lipschitz continuity of $\gamma$ proves $|\widehat B_{\ell,i,p}|^2_{H^{1/2}(a,b)}\lesssim|  B_{\ell,i,p}|_{H^{1/2}(\Gamma)}^2 \lesssim| \overline B_{\ell,i,p}|_{H^{1/2}(\Gamma)}^2$.
With the equivalence $|\cdot|^2_{H^{1/2}(\Gamma)}\simeq \dual{\WW(\cdot)}{(\cdot)}_\Gamma$ on $H^{1/2}(\Gamma)$,  \eqref{eq:vli 1} becomes
\begin{align*}
\dual{V_{\ell,i}}{1}^2\lesssim \alpha_{\ell,i}^2| \overline B_{\ell,i,p}|_{H^{1/2}(\Gamma)}^2\simeq\dual{\WW V_{\ell,i}}{V_{\ell,i}}_\Gamma.
\end{align*}
By definition of the  norm $\enorm{\cdot}$, we infer $\enorm{V_{\ell,i}}^2\lesssim \dual{\WW V_{\ell,i}}{V_{\ell,i}}_\Gamma$.
Hence, \eqref{eq:psi to v} yields that
 \begin{align*}
 \norm{\Psi_L^0}{\VV}^2\stackrel{\eqref{eq:psi to v}} \lesssim \sum_{\ell=0}^L\sum_{i\in\II_\ell} \dual{\WW V_{\ell,i}}{V_{\ell,i}}_\Gamma\stackrel{\eqref{eq:Maue}}=\sum_{\ell=0}^L\sum_{i\in\II_\ell} \norm{\Psi_{\ell,i}}{\VV}^2.
 \end{align*}
We conclude that
\begin{align*}
\norm{\Psi_L}{\VV}^2\lesssim\norm{\Psi_L^{00}}{\VV}^2+\norm{\Psi_L^0}{\VV}^2\lesssim\norm{\Psi_L^{00}}{\VV}^2+\sum_{\ell=0}^L\sum_{i\in\II_\ell}\norm{\Psi_{\ell,i}}{\VV}^2.
\end{align*}
Since $\Psi_L^{00}+\sum_{\ell=0}^L\sum_{i\in\II_\ell}\Psi_{\ell,i}=\Psi_L$ was an arbitrary decomposition, standard additive Schwarz theory proves the upper bound.
 \end{proof}


\section{Numerical experiments}\label{sec:numerics}

In this section, we present a matrix version of Theorem \ref{thm:PAS} and Theorem \ref{thm:PASV}. 
We apply these theorems to define preconditioners for some numerical examples.
Throughout this section, let $(\KK_\ell)_{\ell\in\N_0}$ be a sequence of refined knot vectors, i.e., $\KK_\ell,\KK_{\ell+1}\in\K$ with $\KK_{\ell+1}\in\refine(\KK_\ell)$, 
and let $L\in\N_0$.
For the hypersingular equation \eqref{eq:hypsing}, we allow for arbitrary positive initial weights $\mathcal{W}_0$. 
Whereas, whenever we consider the weakly-singular integral equation \eqref{eq:weaksing}, we suppose that all weights in $\mathcal{W}_0$ are equal to one, wherefore the denominator  satisfies that $w=1$. 
The Galerkin approximations $U_\ell\in\XX_\ell$ for the hypersingular case resp. $\Phi_\ell\in\YY_\ell$ for the weakly-singular case satisfy that
\begin{align}
\edual{U_\ell}{V_\ell}=\dual{f}{V_\ell}_\Gamma\quad\text{resp.}\quad\edualV{\Phi_\ell}{\Psi_\ell}=\dual{g}{\Psi_\ell}_\Gamma\quad \text{for all }V_\ell\in\XX_\ell, \Psi_\ell\in\YY_\ell.
\end{align}
The discrete solutions $U_\ell,\Phi_\ell$ are obtained by solving a linear system of equations 
\begin{align}
\mathbf{W}_\ell\mathbf{x}_\ell=\mathbf{f}_\ell \quad \text{resp. }\quad \mathbf{V}_\ell\mathbf{y}_\ell=\mathbf{g}_\ell,
\end{align}
where
\begin{align}
U_\ell=\sum_{k=1}^{N_\ell-1-o} (\mathbf{x}_\ell)_k \overline R_{\ell,k-p+o,p} \quad\text{resp.}\quad\Phi_\ell=\sum_{k=1}^{N_\ell-1} (\mathbf{y}_\ell)_k B_{\ell,k-(p-1),p-1},
\end{align}
and 
\begin{align}
\mathbf W_\ell=\Big(\edual{\overline R_{\ell,k-p+o,p}}{\overline R_{\ell,j-p+o,p}}\Big)_{j,k=1}^{N_\ell-1-o}, \quad\mathbf{f}_\ell=\Big(\dual{f}{\overline R_{\ell,j-p+o,p}}_\Gamma\Big)_{j=1}^{N_\ell-1-o},
\end{align}
resp.\
\begin{align}
\mathbf{V}_\ell=\Big(\edualV{B_{\ell,k-(p-1),p-1}}{B_{\ell,j-(p-1),p-1}}\Big)_{j,k=1}^{N_\ell-1},\quad \mathbf{g}_\ell=\Big(\dual{g}{B_{\ell,j-(p-1),p-1}}_\Gamma\Big)_{j=1}^{N_\ell-1}.
\end{align}

For any $L\in \N_0$, we aim to derive preconditioners $(\SSAS)^{-1}$ resp.\ $(\SSASV)^{-1}$ for the Galerkin matrices $\mathbf{W}_L$ resp.\ $\mathbf{V}_L$.
For their definition, we first have to introduce the following transformation matrices.
For $0\le\ell\le L$, let $\IW:\XX_\ell\to\XX_L$, $\HW:\widetilde{\XX}_\ell\to\XX_\ell$ and $\IV:\YY_\ell\to\YY_L$ be the canonical embeddings, i.e., the formal identities, with matrix representations $\IIW\in \R^{(N_L-1-o)\times (N_\ell-1-o)}$, $\HHW\in\R^{(N_\ell-1-o)\times(\# \II_\ell)}$ and $\IIV\in \R^{(N_L-1)\times (N_\ell-1)}$.
Further, let $\HHV\in\R^{(N_\ell-1)\times (\#\II_\ell)}$ be the matrix that represents the B-spline derivatives in $\widetilde\YY_\ell^0$ as B-splines in $\YY_\ell$, i.e., 
\begin{align}
\partial_\Gamma \overline{B}_{\ell,i(k),p}=\sum_{j=1}^{N_\ell-1} (\HHV)_{jk} B_{\ell,j-(p-1),p-1} \quad \text{for }k=1,\dots,\# \II_\ell,
\end{align}
with the monotonuously increasing bijection $i(\cdot):\{1,\dots,\#\II_\ell\}\to\II_\ell$.
All these matrices can be computed with the help of Lemma \ref{lem:properties for B-splines}.
Finally, let $\mathbf{1}\in\R^{(N_L-1)\times (N_L-1)}$ be the constant one matrix, i.e., 
\begin{align}(\mathbf{1})_{jk}=1\quad\text{for }j,k=1,\dots,N_L-1.
\end{align}
For any quadratic matrix $\mathbf{A}$, we define the corresponding diagonal matrix ${\rm diag}(\mathbf{A})=(\mathbf{A}_{jk}\cdot \delta_{jk})_{j,k}$.
We consider
\begin{align}\label{eq:precW}
(\SSAS)^{-1}:=\sum_{\ell=0}^L\IIW \HHW{\rm diag}((\HHW)^T\mathbf{W}_\ell \HHW)^{-1}(\HHW)^T(\IIW)^T,
\end{align}
resp.\
\begin{align}\label{eq:precV}
(\SSASV)^{-1}:=\edualV{1}{1}^{-1}\mathbf{1}+\sum_{\ell=0}^L\IIV \HHV{\rm diag}((\HHV)^T\mathbf{V}_\ell \HHV)^{-1}(\HHV)^T(\IIV)^T.
\end{align}
Note that, by the partition of unity property from Lemma \ref{lem:properties for B-splines} \eqref{item:B-splines partition}, there holds that
\begin{align}\edualV{1}{1}=\sum_{j,k=1}^{N_L-1}(\mathbf{V}_L)_{jk}.\end{align}
Instead of solving $\mathbf{W}_L\mathbf{x}_L=\mathbf{f}_L$ resp.\ $\mathbf{V}_L\mathbf{y}_L=\mathbf{g}_L$, we consider the preconditioned  systems
\begin{align}\label{eq:precondSys}
(\SSAS)^{-1}\mathbf W_L\mathbf x_L=(\SSAS)^{-1}\mathbf f_L\quad\text{resp.}\quad(\SSASV)^{-1}\mathbf V_L\mathbf y_L=(\SSASV)^{-1}\mathbf g_L.
\end{align}
Elementary manipulations verify  that the preconditioned matrices $(\SSAS)^{-1}\mathbf W_L$ resp.\ $(\SSASV)^{-1}\mathbf V_L$ are just the matrix representations of $\PAS|_{\XX_L}:\XX_L\to\XX_L$ resp.\ $\PASV|_{\YY_L}:\YY_L\to\YY_L$.
Theorem~\ref{thm:PAS} resp.\ Theorem~\ref{thm:PASV} then immediately prove the next corollary, which states uniform boundedness of the condition number of the preconditioned systems.

For a symmetric and positive definite matrix $\AA$, we denote $\dual\cdot\cdot_\AA :=
\dual{\AA\cdot}\cdot_2$, and by $\norm\cdot\AA$ the
corresponding norm resp.\ induced matrix norm.
Here, $\dual\cdot\cdot_2$ denotes the Euclidean inner product.
The condition number $\cond_\AA$ of a quadratic matrix $\BB$ of same dimension as $\AA$ reads
\begin{align}
  \cond_\AA(\BB) := \norm{\BB}\AA\norm{\BB^{-1}}\AA.
\end{align}

\begin{corollary}\label{cor:main}
The matrices $(\SSAS)^{-1}, (\SSASV)^{-1}$ are symmetric and positive definite with respect to
$\dual\cdot\cdot_2$, and $\PPAS:
= (\SSAS)^{-1}\mathbf W_L$ resp.\  $\PPASV:=(\SSASV)^{-1}\mathbf V_L$ are  symmetric and positive definite with respect to 
$\dual\cdot\cdot_{\SSAS} $ resp.\ $\dual\cdot\cdot_{\SSASV} $.
Moreover, the minimal and maximal eigenvalues of the matrices
$\PPAS$ resp.\ $\PPASV$ satisfy that
\begin{align}\label{eq:numeric ev}
   \evmin\le\lambda_{\min}(\PPAS) \le \lambda_{\max}(\PPAS)\le\evmax,
\end{align}
resp.\
\begin{align}
   \evminV\le\lambda_{\min}(\PPASV) \le \lambda_{\max}(\PPASV)\le\evmaxV,
\end{align}
with the constants $\evmin,\evmax$ from Theorem \ref{thm:PAS} and $\evminV,\evmaxV$ from Theorem \ref{thm:PASV}.
In particular, the condition number of the additive Schwarz matrices
$\PPAS$ resp.\ $\PPASV$ is bounded by
\begin{align}
  \cond_{\SSAS}(\PPAS) \leq \evmax/\evmin \quad\text{resp.}\quad   \cond_{\SSASV}(\PPASV) \leq \evmaxV/\evminV.
\end{align}
Recall that these eigenvalue bounds depend only on $\TT_0$, ${\widehat \kappa}_{\max}, p, w_{\min}, w_{\max}$ and $\gamma$.
\end{corollary}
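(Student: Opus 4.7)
The plan is to reduce Corollary~\ref{cor:main} to Theorems~\ref{thm:PAS} and~\ref{thm:PASV} by identifying $(\SSAS)^{-1}$ and $(\SSASV)^{-1}$ with the standard matrix realizations of the additive Schwarz preconditioners associated to the space decompositions used in those theorems. Once that identification is in place, the assertions on symmetry, positive definiteness and the spectral bounds are essentially automatic consequences of the abstract matrix reformulation of additive Schwarz theory (see, e.g., \cite{ToselliWidlund}).

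For the hypersingular case I would first observe that the columns of $\IIW$ are the coordinates of the basis functions of $\XX_\ell$ in the basis of $\XX_L$, whence $(\IIW)^T \mathbf{W}_L \IIW = \mathbf{W}_\ell$. Consequently, $\IIW\HHW$ represents the embedding $\widetilde{\XX}_\ell \hookrightarrow \XX_L$, and $(\HHW)^T \mathbf{W}_\ell \HHW$ is the Gram matrix of $\{\overline R_{\ell,i,p}\}_{i\in\II_\ell}$ with respect to $\edual{\cdot}{\cdot}$; its diagonal collects precisely the scalars $\edual{\overline R_{\ell,i,p}}{\overline R_{\ell,i,p}}$ appearing in the one-dimensional projections $\prec_{\ell,i}$ from~\eqref{eq:Pelli}. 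Since each $\XX_{\ell,i}$ is one-dimensional, the additive Schwarz contribution of the level-$\ell$ subspaces is exactly $\IIW\HHW\,\diag((\HHW)^T\mathbf{W}_\ell\HHW)^{-1}(\IIW\HHW)^T$, so that summing over $\ell$ reproduces~\eqref{eq:precW} and $(\SSAS)^{-1}\mathbf{W}_L$ becomes the coordinate representation of $\PAS|_{\XX_L}$ in the basis of $\XX_L$.

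With this identification, symmetry and positive definiteness of $(\SSAS)^{-1}$ are read off from~\eqref{eq:precW}, while self-adjointness and positivity of $\PPAS$ with respect to $\dual{\cdot}{\cdot}_{\SSAS}$ follow from $\SSAS\,\PPAS = \mathbf{W}_L$ being symmetric positive definite. The spectral bound~\eqref{eq:numeric ev} is obtained by translating Theorem~\ref{thm:PAS} into matrix form: for $V_L\in\XX_L$ with coefficient vector $\xx$, one has $\enorm{V_L}^2 = \xx^T\mathbf{W}_L\xx$ and $\edual{\PAS V_L}{V_L} = \xx^T\mathbf{W}_L(\SSAS)^{-1}\mathbf{W}_L\xx$, so the estimate $\evmin\enorm{V_L}^2 \leq \edual{\PAS V_L}{V_L} \leq \evmax\enorm{V_L}^2$ is equivalent to $\evmin\,\SSAS \leq \mathbf{W}_L \leq \evmax\,\SSAS$ in the Loewner ordering, which pins the spectrum of $\PPAS$ between $\evmin$ and $\evmax$. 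The condition number estimate then follows because, for an operator that is self-adjoint and positive with respect to $\dual{\cdot}{\cdot}_{\SSAS}$, one has $\cond_{\SSAS}(\PPAS) = \lambda_{\max}(\PPAS)/\lambda_{\min}(\PPAS)$.

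The weakly-singular case proceeds along the same lines, with two small twists. The extra summand $\edualV{1}{1}^{-1}\mathbf{1}$ in~\eqref{eq:precV} accounts for the coarse one-dimensional subspace $\YY^{00}=\linhull\{1\}$: because all weights equal one, the partition-of-unity property (Lemma~\ref{lem:properties for B-splines}\eqref{item:B-splines partition}) yields $1 = \sum_j B_{L,j-(p-1),p-1}$, so the coordinate vector of the constant function $1$ in the basis of $\YY_L$ is $(1,\ldots,1)^T$ and the corresponding one-dimensional additive Schwarz block equals $\edualV{1}{1}^{-1}\mathbf{1}$. For the remaining levels one argues as before, using $\HHV$ to encode the map $\widetilde{\XX}_\ell \ni \overline B_{\ell,i,p} \mapsto \partial_\Gamma \overline B_{\ell,i,p} \in \YY_\ell$ via Lemma~\ref{lem:properties for B-splines}~\eqref{item:derivative of splines}, and Theorem~\ref{thm:PASV} then supplies the spectral bound. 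The main technical obstacle is the careful bookkeeping of the transformation matrices $\HHW$ and $\HHV$ and the verification that~\eqref{eq:precW} and~\eqref{eq:precV} are genuine matrix realizations of the abstract additive Schwarz preconditioner; once this is done, everything else is routine linear algebra.
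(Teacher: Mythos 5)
Your proposal is correct and follows essentially the same route as the paper: identify $(\SSAS)^{-1}\mathbf W_L$ resp.\ $(\SSASV)^{-1}\mathbf V_L$ as the matrix representations of $\PAS|_{\XX_L}$ resp.\ $\PASV|_{\YY_L}$ and then transfer Theorems~\ref{thm:PAS} and~\ref{thm:PASV} into generalized eigenvalue bounds, with the constant-function block handled via the partition of unity. The only difference is that you spell out the ``elementary manipulations'' (e.g.\ $(\IIW)^T\mathbf W_L\IIW=\mathbf W_\ell$ and the role of $\HHW$, $\HHV$) which the paper merely asserts before the corollary.
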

\begin{proof}
We only consider the hypersingular case. The weakly-singular case can be treated analoguously. 
Due to \eqref{eq:space decomp} the operator $\PAS|_{\XX_L}$ is positive definite with respect to $\edual{\cdot}{\cdot}$.
This proves for any $V_L\in\XX_L$ with corresponding coefficient vector $\mathbf{z}_L$ that
\begin{align*}
\dual{(\SSAS)^{-1}\mathbf{W}_L \mathbf z_L}{\mathbf{W}_L\mathbf z_L}_2=\edual{\PAS V_L}{V_L}>0.
\end{align*}
Symmetry and positive definiteness of $\PPAS$ with respect to $\dual{\cdot}{\cdot}_{\SSAS}$  follow immediately by symmetry and positive definiteness of $\mathbf{W}_L$.
Theorem \ref{thm:PAS} and the fact that $\PPAS$ is just the matrix representation of $\PAS|_{\XX_L}$ show \eqref{eq:numeric ev}.
Finally, note that the condition number $\cond_{\SSAS}(\PPAS)$ is just the ratio of the maximal and the minimal eigenvalue of $\PPAS$.
\end{proof}
The corollary can be applied for iterative solution methods such as GMRES~\cite{saad}
or CG~\cite{saad03} to solve~\eqref{eq:precondSys}.
Here, the relative residual of the $j$-th residual depends only on the condition number $ \cond_{\SSAS}(\PPAS)$ resp.\ $ \cond_{\SSASV}(\PPASV)$.
Hence, Corollary \ref{cor:main} proves that the iterative scheme together with the preconditioners $(\SSAS)^{-1}$ resp.\ $(\SSASV)^{-1}$ is optimal in the following sense:
The number of iterations to reduce the relative residual under the tolerance $\epsilon>0$ is bounded by a constant, which depends only on $\TT_0$, ${\widehat \kappa}_{\max}, p, w_{\min}, w_{\max}$ and $\gamma$.

\begin{remark}\label{rem:linear}
  The application of the preconditioners $(\SSAS)^{-1}$ resp.\ $(\SSASV)^{-1}$ on a vector $\mathbf z_L$ can be done efficiently in $\OO(N_L)$ operations.
  Furthermore, the storage requirements of the preconditioners, i.e., the memory consumption of all the tranformation matrices $\mathbf{id}$ and
the diagonal matrices $\diag(\cdot)$ in the sum  is $\OO(N_L)$.
  This implies the optimal linear complexity of our preconditioners.
  A detailed description of an algorithm, which implements the matrix-vector multiplication, can be found in our recent
  work~\cite[Algorithm~1]{ffps} for some local multilevel preconditioner for the
  hypersingular integral operator on adaptively refined meshes, resp.\ in~\cite{yserentant} for some hierarchical basis
  preconditioner.
\end{remark}

In the following subsections, we numerically show the optimality of the proposed preconditioners.
In all examples, the exact solution is known and singular, wherefore adaptive methods are preferable.
To steer the mesh refinement, we apply the following  adaptive Algorithm~\ref{the algorithm} proposed in \cite[Algorithm 3.1]{resigabem} for the weakly-singuar case resp.\ in \cite[Algorithm~3.1 (with $\vartheta=0$)]{hypiga}  for the hypersingular case.
In each experiment, we choose $\theta=0.9$ and employ the weighted-residual error indicators $\eta_\ell(z)$ to steer the refinement.

\begin{algorithm}\label{the algorithm}
\textbf{Input:} Adaptivity parameter $0<\theta\le1$, polynomial order $p\in \N$, initial knots $\KK_0$, initial weights $\mathcal{W}_0$.\\
\textbf{Adaptive loop:} For each $\ell=0,1,2,\dots$ iterate the following steps {\rm(i)--(vi)}:
\begin{itemize}
\item[\rm(i)] Compute discrete approximation $U_\ell\in\XX_\ell$ in the hypersingular case resp.\ $\Phi_\ell\in\YY_\ell$ in the weakly-singular case.
\item[\rm(ii)] Compute refinement indicators $\eta_\ell(z)$
for all nodes ${z}\in\NN_\ell$.
\item[\rm(iii)] Determine a minimal set of nodes $\MM_\ell\subseteq\NN_\ell$ such that
\begin{align}\label{eq:Doerfler}
 \theta\,\eta_\ell^2 \le \sum_{{z}\in\MM_\ell}\eta_\ell({z})^2.
\end{align}
\item[\rm(iv)] If both nodes of an element $T\in\mathcal{T}_\ell$ belong to $\mathcal{M}_\ell$, $T$  will be marked.
\item[\rm(v)] For all other\footnote{These are the nodes $z\in\mathcal{M}_\ell$ whose neighbors are both not marked.} nodes $z\in\mathcal{M}_\ell$, the multiplicity will be increased if $z$ satisfies that $z\not\in\{a,b\}$ and $\#_\ell z<p$, otherwise the elements, which contain one of these nodes $z\in\mathcal{M}_\ell$, will be marked.
\item[\rm(vi)] Refine all marked elements $T\in\mathcal{T}_\ell$ by bisection (insertion of a node with multiplicity one) of the corresponding element $\gamma^{-1}(T)$ in the parameter domain.
Use further bisections to guarantee that the new knots $\KK_{\ell+1}$ satisfy that
\begin{align}\label{eq:kappa small}
\widehat \kappa_{\ell+1}\leq 2\widehat \kappa_0.
\end{align}
\end{itemize}
\textbf{Output:} Approximate solutions $U_\ell$ resp.\ $\Phi_\ell$ and error estimators $\eta_\ell$ for all $\ell \in \N_0$.
\end{algorithm}

The resulting linear systems are solved by PCG.
We refer to the recent work \cite{optpcg} for the interplay of PCG solver and optimal adaptivity.
We compare the preconditioners to simple diagonal preconditioning.
In all experiments the initial vector in the PCG-algorithm is set to $0$ and the tolerance parameter $\epsilon>0$ for the relative residual is $\epsilon=10^{-8}$.

\begin{figure}
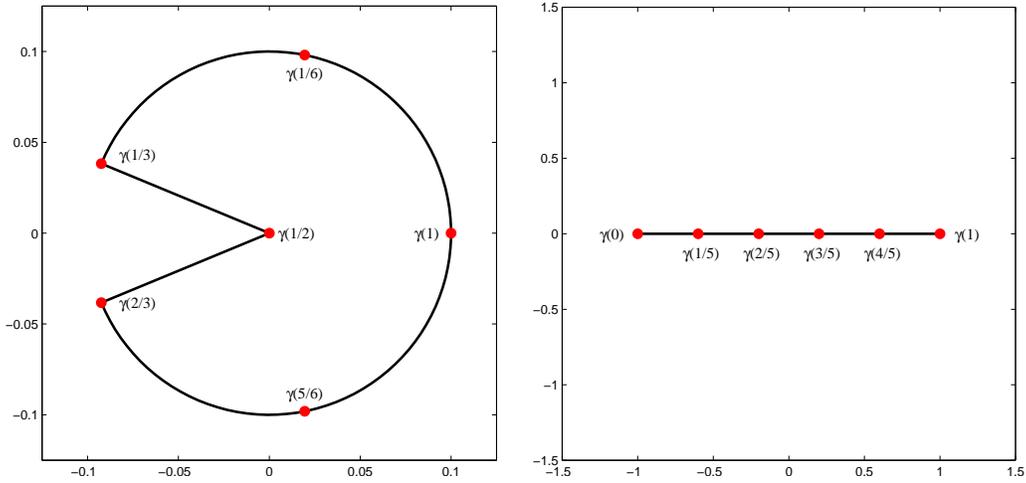

\psfrag{pacman (Section 5.3)}[c][c]{}
\psfrag{slit (Section 5.4)}[c][c]{}
\psfrag{45}[r][r]{\tiny $45^\circ$}
\begin{center}
\includegraphics[width=0.4\textwidth,clip=true]{figures/pacman_geometry.eps}\quad
\includegraphics[width=0.4\textwidth,clip=true]{figures/slit_geometry.eps}
\end{center}
\caption{Geometries and initial nodes for the experiments from Section \ref{sec:numerics}.} 
\label{fig:geometries}
\end{figure}
\subsection{Adaptive BEM for hypersingular integral equation for Neumann problem on pacman}\label{sec:hyper pacman}
We consider the boundary $\Gamma=\partial\Omega$ of the pacman geometry 
\begin{align}
\Omega:=\left\{r(\cos(\beta),\sin(\beta)):0\le r<\frac{1}{10}, \beta \in \left(-\frac{\pi}{2\tau},\frac{\pi}{2\tau}\right)\right\}
\end{align}
with $\tau=4/7$, sketched in Figure \ref{fig:geometries}.
It can be parametrized by a NURBS curve $\gamma:[0,1]\to\Gamma$ of degree two; see \cite[Section 3.2]{resigabem}.
With the 2D polar coordinates $(r,\beta)$, the function 
\begin{equation*}
P(x,y):=r^{\tau}\cos\left(\tau\beta\right)
\end{equation*}
satisfies that$-\Delta P=0$ and has a generic singularity at the origin. 
With the adjoint double-layer operator $\mathfrak{K}'$, we define  with the normal derivative $\partial_\nu P$
\begin{align*}
f:=(1/2-\mathfrak{K}')\partial_{\nu}P.
\end{align*} 
Up to an additive constant, there holds $u=P|_\Gamma$, where $u$ is the solution of the corresponding hypersingular integral equation.

For Algorithm~\ref{the algorithm}, we choose NURBS of degree two as ansatz space $\XX_\ell$ (i.e., $p=2$) and the same initial knots ${\KK}_{0}$ and weights $\mathcal{W}_0$ as for the geometry representation.

Due to numerical stability reasons, we replace  the right-hand side $f$ in each step  by $f_\ell:= (1/2-\mathfrak{K}')\phi_\ell$.
Here, $\phi_\ell$ is the $L^2(\Gamma)$-orthogonal projection of $\phi:=(\partial_{\nu}P)$ onto the space of transformed piecewise polynomials of degree $p$ on $\TT_\ell$, i.e., $\phi_\ell\circ\gamma$ is  polynomial on all $\gamma^{-1}(T)$ with $T\in\TT_\ell$.
This leads to a perturbed Galerkin approximation $U_\ell^{\rm pert}$.
To steer the algorithm, we use the weighted-residual error indicators $\eta_\ell(z)^2:= \norm{h_\ell^{1/2}(f_\ell-\mathfrak{W}U_{\ell}^{\rm pert})}{L^2(\omega_\ell(z))}^2+\norm{ h_\ell^{1/2}(\phi-\phi_\ell)}{L^2(\omega_\ell(z))}^2$.

In Figure \ref{fig:hyper pacman}, we compare the condition numbers of diagonal preconditioning with our proposed additive Schwarz approach.
Whereas diagonal preconditioning is suboptimal, we observe optimality for our approach, which numerically verifies our theoretical result in Corollary \ref{cor:main}.
This is also reflected by the number of PCG iterations.
Moreover, we plot the time needed to apply the proposed preconditioner $(\SSAS)^{-1}$ from \eqref{eq:precW} to 100 random vectors.
In accordance with Remark~\ref{rem:linear}, we observe linear complexity.

\begin{figure}[h!]
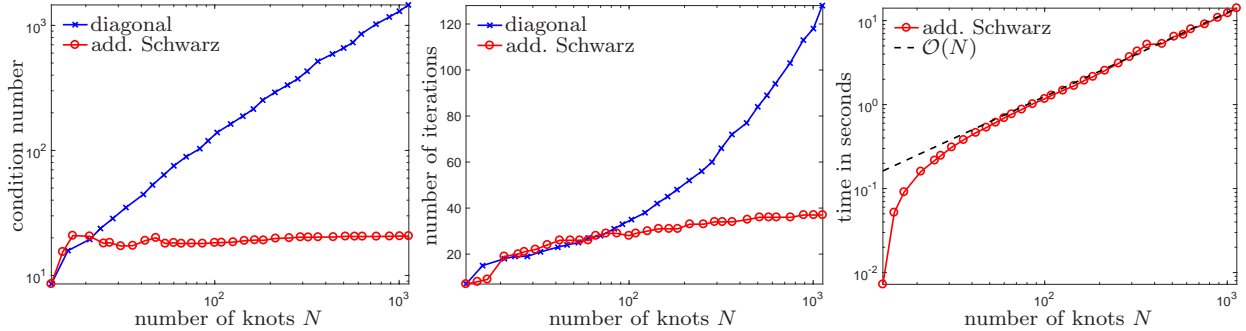

\begin{center}
\psfrag{diag}{\tiny diagonal}
\psfrag{localAS}{\tiny add.\ Schwarz}
\psfrag{O(N)}{\tiny $\mathcal{O}(N)$}
\psfrag{number of knots N}[c][c]{\tiny number of knots $N$}
\psfrag{condition number}[c][c]{\tiny condition number}
\psfrag{time in seconds}[c][c]{\tiny time in seconds}
\psfrag{number of iterations}[c][c]{\tiny number of iterations}
\includegraphics[width=0.325\textwidth,clip=true]{figures/hyper_pacman_cond.eps}
\includegraphics[width=0.325\textwidth,clip=true]{figures/hyper_pacman_iter.eps}
\includegraphics[width=0.325\textwidth,clip=true]{figures/hyper_pacman_time.eps}
\end{center}
\caption{Condition numbers $\lambda_{\max}/\lambda_{\min}$ of the diagonal and the additive Schwarz preconditioned Galerkin matrices, number of PCG iterations, and time to apply the additive Schwarz preconditioner to 100 random vectors for the hypersingular equation on the pacman from Section~\ref{sec:hyper pacman}.} 
\label{fig:hyper pacman}
\end{figure}

\subsection{Adaptive BEM for weakly-singular integral equation for Dirichlet problem on pacman}\label{sec:weak pacman}
Let $\Omega$ and $P$ be as in the previous section. 
With the double-layer operator $\mathfrak{K}$ and the right-hand side 
\begin{align}
g:=(1/2+\mathfrak{K}) P|_\Gamma,
\end{align}
the solution of the weakly-singular integral equation \eqref{eq:weaksing} is just the normal derivative of $P$, i.e., $\phi=\partial_{\nu}P$.
For Algorithm~\ref{the algorithm}, we choose splines  of degree two as ansatz space $\YY_\ell$ (i.e., $p=3$ and all weights are equal to one) and the initial knots $\KK_0$  as for the geometry.
To steer the algorithm, we use the weighted-residual error indicators $\eta_\ell(z):= \norm{h_\ell^{1/2} \partial_\Gamma(g-\VV\Phi_\ell)}{L^2(\omega_\ell(z)}$.
Figure~\ref{fig:weak pacman} shows a comparison of the diagonal and the additive Schwarz preconditioner and  the time needed to apply the  preconditioner $(\SSASV)^{-1}$ from \eqref{eq:precV} to 100 random vectors.

\begin{figure}[h!]
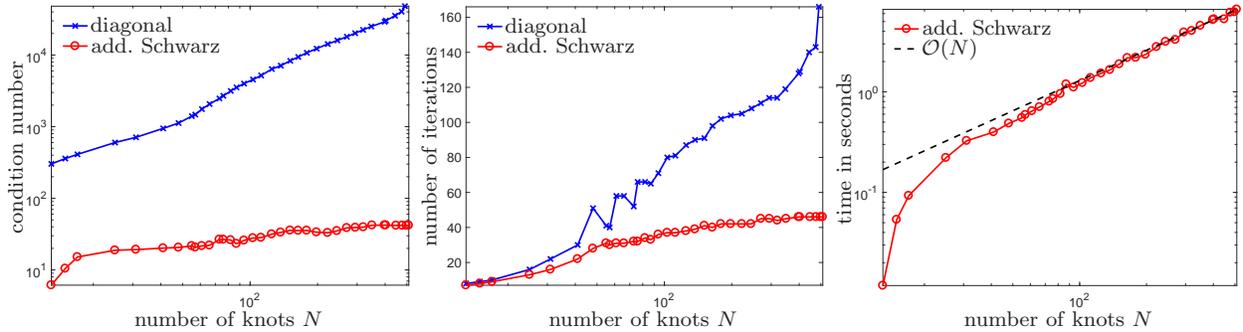

\begin{center}
\psfrag{diag}{\tiny diagonal}
\psfrag{localAS}{\tiny add.\ Schwarz}
\psfrag{O(N)}{\tiny $\mathcal{O}(N)$}
\psfrag{number of knots N}[c][c]{\tiny number of knots $N$}
\psfrag{condition number}[c][c]{\tiny condition number}
\psfrag{time in seconds}[c][c]{\tiny time in seconds}
\psfrag{number of iterations}[c][c]{\tiny number of iterations}
\includegraphics[width=0.325\textwidth,clip=true]{figures/weak_pacman_cond.eps}
\includegraphics[width=0.325\textwidth,clip=true]{figures/weak_pacman_iter.eps}
\includegraphics[width=0.325\textwidth,clip=true]{figures/weak_pacman_time.eps}
\end{center}
\caption{Condition numbers $\lambda_{\max}/\lambda_{\min}$ of the diagonal and the additive Schwarz preconditioned Galerkin matrices, number of PCG iterations, and time to apply the additive Schwarz preconditioner to 100 random vectors  for the weakly-singular equation on the pacman from Section~\ref{sec:weak pacman}.} \label{fig:weak pacman}
\end{figure}

\subsection{Adaptive BEM for hypersingular integral equation on slit}\label{sec:hyper slit}
We consider the hypersingular integral equation on the slit $\Gamma=[-1,1]\times \{0\}$, sketched in Figure \ref{fig:geometries}, which is represented as spline curve $\gamma:[0,1]\to\Gamma$ of degree one; see \cite[Section 3.4]{resigabem}.
For $f:=1$, the exact solution is $u(x,0)=2\sqrt{1-x^2}$.
For Algorithm~\ref{the algorithm}, we choose splines of degree one as ansatz space $\XX_\ell$ (i.e., $p=1$ and all weights are equal to one) and the initial knots $\KK_0$   as for the geometry.
To steer the algorithm, we use the weighted-residual error indicators $\eta_\ell(z):= \norm{h_\ell^{1/2}(f-\mathfrak{W}U_\ell)}{L^2(\omega_\ell(z))}$.
Again, we compare diagonal preconditioning and the local multilevel diagonal preconditioner and consider the performance of the latter; see Figure \ref{fig:hyper slit}.

\begin{figure}[h!]
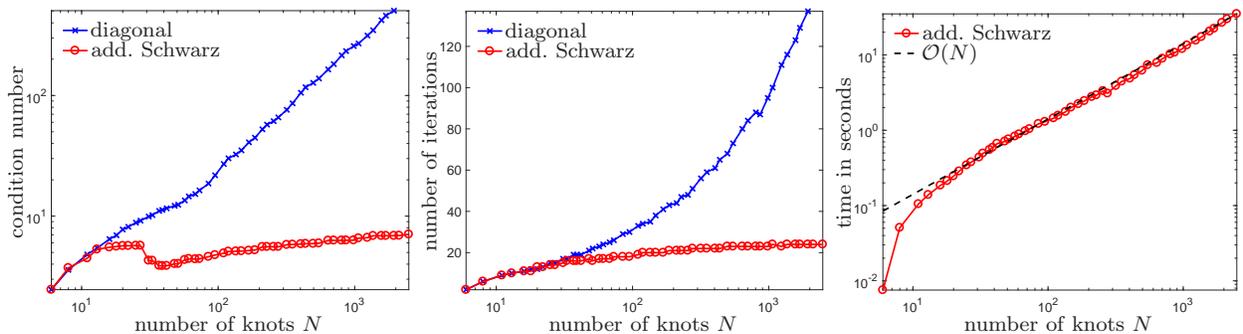

\begin{center}
\psfrag{diag}{\tiny diagonal}
\psfrag{localAS}{\tiny add.\ Schwarz}
\psfrag{O(N)}{\tiny $\mathcal{O}(N)$}
\psfrag{number of knots N}[c][c]{\tiny number of knots $N$}
\psfrag{condition number}[c][c]{\tiny condition number}
\psfrag{time in seconds}[c][c]{\tiny time in seconds}
\psfrag{number of iterations}[c][c]{\tiny number of iterations}
\includegraphics[width=0.325\textwidth,clip=true]{figures/hyper_slit_cond.eps}
\includegraphics[width=0.325\textwidth,clip=true]{figures/hyper_slit_iter.eps}
\includegraphics[width=0.325\textwidth,clip=true]{figures/hyper_slit_time.eps}
\end{center}
\caption{Condition numbers $\lambda_{\max}/\lambda_{\min}$ of the diagonal and the additive Schwarz preconditioned Galerkin matrices, number of PCG iterations, and time to apply the additive Schwarz preconditioner to 100 random vectors 
 for the hypersingular equation on the slit from Section~\ref{sec:hyper slit}.} \label{fig:hyper slit}
\end{figure}

\subsection{Adaptive BEM for weakly-singular integral equation on slit}\label{sec:weak slit}
Let $\Gamma$ be again the slit $[-1,1]\times\{0\}$.
For the weakly-singular integral equation with $g:=-x/2$, the corresponding solution reads $\phi(x,0)=-x/\sqrt{1-x^2}$.
For Algorithm~\ref{the algorithm}, we choose splines of degree one  as ansatz space $\YY_\ell$ (i.e., $p=2$ and all weights are equal to one) and the initial knots $\KK_0$  as for the geometry.
To steer the algorithm, we use the weighted-residual error indicators $\eta_\ell(z):= \norm{h_\ell^{1/2} \partial_\Gamma(g-\VV\Phi_\ell)}{L^2(\omega_\ell(z))}$.
Figure \ref{fig:weak slit} compares the diagonal and the additive Schwarz preconditioner.
Moreover, the performance of the latter is  illustrated.

\begin{figure}[h!]
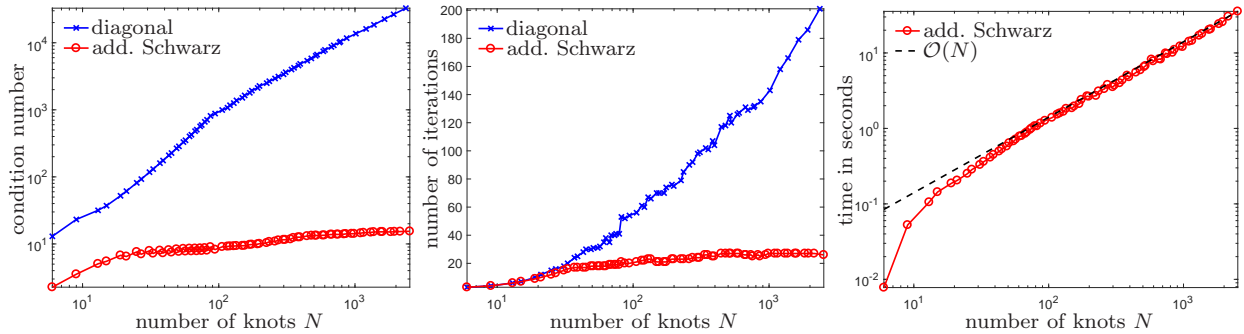

\begin{center}
\psfrag{diag}{\tiny diagonal}
\psfrag{localAS}{\tiny add.\ Schwarz}
\psfrag{O(N)}{\tiny $\mathcal{O}(N)$}
\psfrag{number of knots N}[c][c]{\tiny number of knots $N$}
\psfrag{condition number}[c][c]{\tiny condition number}
\psfrag{time in seconds}[c][c]{\tiny time in seconds}
\psfrag{number of iterations}[c][c]{\tiny number of iterations}
\includegraphics[width=0.325\textwidth,clip=true]{figures/weak_slit_cond.eps}
\includegraphics[width=0.325\textwidth,clip=true]{figures/weak_slit_iter.eps}
\includegraphics[width=0.325\textwidth,clip=true]{figures/weak_slit_time.eps}
\end{center}
\caption{Condition numbers $\lambda_{\max}/\lambda_{\min}$ of the diagonal and the additive Schwarz preconditioned Galerkin matrices, number of PCG iterations, and time to apply the additive Schwarz preconditioner to 100 random vectors for the weakly-singular equation on the slit  from Section~\ref{sec:weak slit}.} \label{fig:weak slit}
\end{figure}

\bibliographystyle{alpha}
\bibliography{literature}

\newcommand{\etalchar}[1]{$^{#1}$}
\begin{thebibliography}{BdVPS{\etalchar{+}}17}

\bibitem[ACD{\etalchar{+}}18]{sampoli}
Alessandra Aimi, Francesco Calabr{\`o}, Mauro Diligenti, Maria~L. Sampoli,
  Giancarlo Sangalli, and Alessandra Sestini.
\newblock Efficient assembly based on {B}-spline tailored quadrature rules for
  the {I}g{A}-{SGBEM}.
\newblock {\em Comput. Methods Appl. Mech. Engrg.}, 331:327--342, 2018.

\bibitem[AEF{\etalchar{+}}14]{hilbert}
Markus Aurada, Michael Ebner, Michael Feischl, Samuel Ferraz-Leite, Thomas
  F{\"u}hrer, Petra Goldenits, Michael Karkulik, Markus Mayr, and Dirk
  Praetorius.
\newblock {H}{I}{L}{B}{E}{R}{T} --- a {M}{A}{T}{L}{A}{B} implementation of
  adaptive 2{D}-{B}{E}{M}.
\newblock {\em Numer. Algorithms}, 67(1):1--32, 2014.

\bibitem[AFF{\etalchar{+}}15]{hypsing3d}
Markus Aurada, Michael Feischl, Thomas F\"uhrer, Michael Karkulik, and Dirk
  Praetorius.
\newblock Energy norm based error estimators for adaptive {BEM} for
  hypersingular integral equations.
\newblock {\em Appl. Numer. Math.}, 95:250--270, 2015.

\bibitem[AM03]{amcl03}
Mark Ainsworth and William McLean.
\newblock Multilevel diagonal scaling preconditioners for boundary element
  equations on locally refined meshes.
\newblock {\em Numer. Math.}, 93(3):387--413, 2003.

\bibitem[AMT99]{amt99}
Mark Ainsworth, William McLean, and Thanh Tran.
\newblock The conditioning of boundary element equations on locally refined
  meshes and preconditioning by diagonal scaling.
\newblock {\em SIAM J. Numer. Anal.}, 36(6):1901--1932, 1999.

\bibitem[BdVBSV14]{overview}
Lourenco Beir{\~a}o~da Veiga, Annalisa Buffa, Giancarlo Sangalli, and Rafael
  V{{\'a}}zquez.
\newblock Mathematical analysis of variational isogeometric methods.
\newblock {\em Acta Numer.}, 23:157--287, 2014.

\bibitem[BdVCPS13]{MR3037302}
Lourenco Beir{\~a}o~da Veiga, Durkbin Cho, Luca~F. Pavarino, and Simone
  Scacchi.
\newblock B{DDC} preconditioners for isogeometric analysis.
\newblock {\em Math. Models Methods Appl. Sci.}, 23(6):1099--1142, 2013.

\bibitem[BdVPS{\etalchar{+}}14]{MR3216651}
Lourenco Beir{\~a}o~da Veiga, Luca~F. Pavarino, Simone Scacchi, Olof~B.
  Widlund, and Stefano Zampini.
\newblock Isogeometric {BDDC} preconditioners with deluxe scaling.
\newblock {\em SIAM J. Sci. Comput.}, 36(3):A1118--A1139, 2014.

\bibitem[BdVPS{\etalchar{+}}17]{MR3612901}
Lourenco Beir{\~a}o~da Veiga, Luca~F. Pavarino, Simone Scacchi, Olof~B.
  Widlund, and Stefano Zampini.
\newblock Adaptive selection of primal constraints for isogeometric {BDDC}
  deluxe preconditioners.
\newblock {\em SIAM J. Sci. Comput.}, 39(1):A281--A302, 2017.

\bibitem[BHKS13]{bhksBPX}
Annalisa Buffa, Helmut Harbrecht, Angela Kunoth, and Giancarlo Sangalli.
\newblock B{PX}-preconditioning for isogeometric analysis.
\newblock {\em Comput. Methods Appl. Mech. Engrg.}, 265:63--70, 2013.

\bibitem[Bor94]{bornemann}
Folkmar~A. Bornemann.
\newblock Interpolation spaces and optimal multilevel preconditioners.
\newblock {\em Contemp. Math.}, 180:3--8, 1994.

\bibitem[Cao02]{cao}
Thang Cao.
\newblock Adaptive-additive multilevel methods for hypersingular integral
  equation.
\newblock {\em Appl. Anal.}, 81(3):539--564, 2002.

\bibitem[CHB09]{bible}
J.~Austin Cottrell, Thomas J.~R. Hughes, and Yuri Bazilevs.
\newblock {\em Isogeometric analysis: toward integration of {C}{A}{D} and
  {F}{E}{A}}.
\newblock John Wiley \& Sons, New York, 2009.

\bibitem[CV17]{BPXadapIGA}
Durkbin Cho and Rafael V\'{a}zquez.
\newblock {BPX} preconditioners for isogeometric analysis using
  analysis-suitable {T}-splines.
\newblock Technical Report 17--01, {IMATI} report series, 2017.

\bibitem[dB86]{Boor-SplineBasics}
Carl de~Boor.
\newblock {\em B (asic)-spline basics}.
\newblock Mathematics Research Center, University of Wisconsin-Madison, 1986.

\bibitem[DHK{\etalchar{+}}18]{wolf18}
J{\"u}rgen D{\"o}lz, Helmut Harbrecht, Stefan Kurz, Sebastian Sch{\"o}ps, and
  Felix Wolf.
\newblock A fast isogeometric {BEM} for the three dimensional {L}aplace- and
  {H}elmholtz problems.
\newblock {\em Comput. Methods Appl. Mech. Engrg.}, 330:83--101, 2018.

\bibitem[DHP16]{dhp16}
J{\"u}rgen D{\"o}lz, Helmut Harbrecht, and Michael Peters.
\newblock An interpolation-based fast multipole method for higher-order
  boundary elements on parametric surfaces.
\newblock {\em Internat. J. Numer. Methods Engrg.}, 108(13):1705--1728, 2016.

\bibitem[DKSW18]{wolf_new}
J{\"u}rgen D{\"o}lz, Stefan Kurz, Sebastian Sch{\"o}ps, and Felix Wolf.
\newblock Isogeometric boundary elements in electromagnetism: {R}igorous
  analysis, fast methods, and examples.
\newblock {\em arXiv preprint}, 1807.03097, 2018.

\bibitem[FFPS15]{ffpsfembem}
Michael Feischl, Thomas F{\"u}hrer, Dirk Praetorius, and Ernst~P. Stephan.
\newblock Optimal preconditioning for the symmetric and nonsymmetric coupling
  of adaptive finite elements and boundary elements.
\newblock {\em Numer. Methods Partial Differential Equations}, 2015.

\bibitem[FFPS17]{ffps}
Michael Feischl, Thomas F{\"u}hrer, Dirk Praetorius, and Ernst~P. Stephan.
\newblock Optimal additive {S}chwarz preconditioning for hypersingular integral
  equations on locally refined triangulations.
\newblock {\em Calcolo}, 54(1):367--399, 2017.

\bibitem[FGHP16]{resigabem}
Michael Feischl, Gregor Gantner, Alexander Haberl, and Dirk Praetorius.
\newblock Adaptive 2{D} {I}{G}{A} boundary element methods.
\newblock {\em Eng. Anal. Bound. Elem.}, 62:141--153, 2016.

\bibitem[FGHP17]{optigabem}
Michael Feischl, Gregor Gantner, Alexander Haberl, and Dirk Praetorius.
\newblock Optimal convergence for adaptive {IGA} boundary element methods for
  weakly-singular integral equations.
\newblock {\em Numer. Math.}, 136(1):147--182, 2017.

\bibitem[FGK{\etalchar{+}}18]{giannelli_new}
Antonella Falini, Carlotta Giannelli, Tadej Kandu{\v{c}}, Maria~L. Sampoli, and
  Alessandra Sestini.
\newblock An adaptive {I}g{A}-{BEM} with hierarchical {B}-splines based on
  quasi-interpolation quadrature schemes.
\newblock {\em Internat. J. Numer. Methods Engrg.}, 2018.

\bibitem[FGP15]{igafaermann}
Michael Feischl, Gregor Gantner, and Dirk Praetorius.
\newblock Reliable and efficient a posteriori error estimation for adaptive
  {I}{G}{A} boundary element methods for weakly-singular integral equations.
\newblock {\em Comput. Methods Appl. Mech. Engrg.}, 290:362--386, 2015.

\bibitem[FHPS18]{optpcg}
Thomas F{\"u}hrer, Alexander Haberl, Dirk Praetorius, and Stefan Schimanko.
\newblock Adaptive {BEM} with inexact {PCG} solver yields almost optimal
  computational costs.
\newblock {\em Numer. Math.}, published onine first, 2018.

\bibitem[FK18]{falini2018study}
Antonella Falini and Tadej Kanduc.
\newblock A study on spline quasi-interpolation based quadrature rules for the
  isogeometric {G}alerkin {BEM}.
\newblock {\em arXiv preprint}, 1807.11277, 2018.

\bibitem[FMPR15]{fmpr}
Thomas F{\"u}hrer, J.~Markus Melenk, Dirk Praetorius, and Alexander Rieder.
\newblock Optimal additive {S}chwarz methods for the hp-{BEM}: The
  hypersingular integral operator in 3{D} on locally refined meshes.
\newblock {\em Comput. Math. Appl.}, 70(7):1583--1605, 2015.

\bibitem[F{\"u}h14]{dissfuehrer}
Thomas F{\"u}hrer.
\newblock {\em {Z}ur {K}opplung von finiten {E}lementen und {R}andelementen}.
\newblock PhD thesis, TU Wien, 2014.

\bibitem[Gan14]{diplarbeit}
Gregor Gantner.
\newblock Isogeometric adaptive {B}{E}{M}.
\newblock Master's thesis, TU Wien, 2014.

\bibitem[Gan17]{diss}
Gregor Gantner.
\newblock {\em Optimal adaptivity for splines in finite and boundary element
  methods}.
\newblock PhD thesis, TU Wien, 2017.

\bibitem[GHS05]{inverse2}
Ivan~G. Graham, Wolfgang Hackbusch, and Stefan~A. Sauter.
\newblock Finite elements on degenerate meshes: inverse-type inequalities and
  applications.
\newblock {\em IMA J. Numer. Anal.}, 25(2):379--407, 2005.

\bibitem[GKT13]{MR3002801}
Krishan P.~S. Gahalaut, Johannes~K. Kraus, and Satyendra~K. Tomar.
\newblock Multigrid methods for isogeometric discretization.
\newblock {\em Comput. Methods Appl. Mech. Engrg.}, 253:413--425, 2013.

\bibitem[GM06]{gm06}
Ivan~G. Graham and William McLean.
\newblock Anisotropic mesh refinement: the conditioning of {G}alerkin boundary
  element matrices and simple preconditioners.
\newblock {\em SIAM J. Numer. Anal.}, 44(4):1487--1513, 2006.

\bibitem[GPS19]{hypiga}
Gregor Gantner, Dirk Praetorius, and Stefan Schimanko.
\newblock Adaptive isogeometric boundary element methods with local smoothness
  control.
\newblock {\em arXiv preprint}, 1903.01830, 2019.

\bibitem[HAD14]{stokesiga}
Luca Heltai, Marino Arroyo, and Antonio DeSimone.
\newblock Nonsingular isogeometric boundary element method for {S}tokes flows
  in 3{D}.
\newblock {\em Comput. Methods Appl. Mech. Engrg.}, 268:514--539, 2014.

\bibitem[HCB05]{pioneer}
Thomas J.~R. Hughes, J.~Austin Cottrell, and Yuri Bazilevs.
\newblock Isogeometric analysis: {C}{A}{D}, finite elements, {N}{U}{R}{B}{S},
  exact geometry and mesh refinement.
\newblock {\em Comput. Methods Appl. Mech. Engrg.}, 194(39):4135--4195, 2005.

\bibitem[Heu96]{heuer}
Norbert Heuer.
\newblock Efficient algorithms for the p-version of the boundary element
  method.
\newblock {\em J. Integral Equations Appl.}, 8(3):337--361, 1996.

\bibitem[HJHUT14]{hjhut2014}
Ralf Hiptmair, Carlos Jerez-Hanckes, and Carolina Urz\'ua-Torres.
\newblock Mesh-independent operator preconditioning for boundary elements on
  open curves.
\newblock {\em SIAM J. Numer. Anal.}, 52(5):2295--2314, 2014.

\bibitem[HJHUT16]{HJU16_646}
Ralf Hiptmair, Carlos Jerez-Hanckes, and Carolina Urz{\'u}a-Torres.
\newblock Optimal operator preconditioning for hypersingular operator over 3{D}
  screens.
\newblock {\em Seminar for Applied Mathematics, ETH Z{\"u}rich}, 2016(09),
  2016.

\bibitem[HJHUT17]{HJU17_709}
Ralf Hiptmair, Carlos Jerez-Hanckes, and Carolina Urz{\'u}a-Torres.
\newblock Optimal operator preconditioning for weakly singular operator over
  3{D} screens.
\newblock {\em Seminar for Applied Mathematics, ETH Z{\"u}rich}, 2017(13),
  2017.

\bibitem[HJKZ16]{MR3522271}
Clemens Hofreither, Bert J\"uttler, G\'abor Kiss, and Walter Zulehner.
\newblock Multigrid methods for isogeometric analysis with {THB}-splines.
\newblock {\em Comput. Methods Appl. Mech. Engrg.}, 308:96--112, 2016.

\bibitem[HR10]{cad2wave}
Helmut Harbrecht and Maharavo Randrianarivony.
\newblock From computer aided design to wavelet {BEM}.
\newblock {\em Comput. Vis. Sci.}, 13(2):69--82, 2010.

\bibitem[HTZ17]{MR3610090}
Clemens Hofreither, Stefan Takacs, and Walter Zulehner.
\newblock A robust multigrid method for {I}sogeometric {A}nalysis in two
  dimensions using boundary correction.
\newblock {\em Comput. Methods Appl. Mech. Engrg.}, 316:22--42, 2017.

\bibitem[KHZvE17]{keuchel}
S{\"o}ren Keuchel, Nils~Christian Hagelstein, Olgierd Zaleski, and Otto von
  Estorff.
\newblock Evaluation of hypersingular and nearly singular integrals in the
  isogeometric boundary element method for acoustics.
\newblock {\em Comput. Methods Appl. Mech. Engrg.}, 325:488--504, 2017.

\bibitem[Lio88]{lions88}
Pierre-Louis Lions.
\newblock {\em On the {S}chwarz alternating method. {I}}.
\newblock SIAM, Philadelphia, 1988.

\bibitem[McL00]{mclean}
William McLean.
\newblock {\em Strongly elliptic systems and boundary integral equations}.
\newblock Cambridge University Press, Cambridge, 2000.

\bibitem[MZBF15]{zechner}
Benjamin Marussig, J{\"u}rgen Zechner, Gernot Beer, and Thomas-Peter Fries.
\newblock Fast isogeometric boundary element method based on independent field
  approximation.
\newblock {\em Comput. Methods Appl. Math.}, 284:458--488, 2015.

\bibitem[NZW{\etalchar{+}}17]{tran}
B.~H. Nguyen, Xiaoying Zhuang, Peter Wriggers, Timon Rabczuk, M.~E. Mear, and
  Han~D. Tran.
\newblock Isogeometric symmetric {G}alerkin boundary element method for
  three-dimensional elasticity problems.
\newblock {\em Comput. Methods Appl. Mech. Engrg.}, 323:132--150, 2017.

\bibitem[PGK{\etalchar{+}}09]{igabem2d}
Costas Politis, Alexandros~I. Ginnis, Panagiotis~D. Kaklis, Kostas
  Belibassakis, and Christian Feurer.
\newblock An isogeometric {BEM} for exterior potential-flow problems in the
  plane.
\newblock {\em Proceedings of SIAM/ACM Joint Conference on Geometric and
  Physical Modeling}, pages 349--354, 2009.

\bibitem[PTC13]{helmholtziga}
Michael~J. Peake, Jon Trevelyan, and Graham Coates.
\newblock Extended isogeometric boundary element method ({XIBEM}) for
  two-dimensional {H}elmholtz problems.
\newblock {\em Comput. Methods Appl. Mech. Engrg.}, 259:93--102, 2013.

\bibitem[Saa03]{saad03}
Yousef Saad.
\newblock {\em Iterative methods for sparse linear systems}.
\newblock SIAM, Philadelphia, 2003.

\bibitem[SBLT13]{simpson}
Robert~N. Simpson, St\'ephane P.~A. Bordas, Haojie Lian, and Jon Trevelyan.
\newblock An isogeometric boundary element method for elastostatic analysis:
  2{D} implementation aspects.
\newblock {\em Comput. \& Structures}, 118:2--12, 2013.

\bibitem[SBTR12]{SBTR}
Robert~N. Simpson, St\'ephane P.~A. Bordas, Jon Trevelyan, and Timon Rabczuk.
\newblock A two-dimensional isogeometric boundary element method for
  elastostatic analysis.
\newblock {\em Comput. Methods Appl. Mech. Engrg.}, 209/212:87--100, 2012.

\bibitem[SS86]{saad}
Yousef Saad and Martin~H. Schultz.
\newblock G{MRES}: a generalized minimal residual algorithm for solving
  nonsymmetric linear systems.
\newblock {\em SIAM J. Sci. Statist. Comput.}, 7(3):856--869, 1986.

\bibitem[SS11]{ss}
Stefan~A. Sauter and Christoph Schwab.
\newblock {\em Boundary element methods}.
\newblock Springer, Berlin, 2011.

\bibitem[SSE{\etalchar{+}}13]{igabem3d}
Michael~A. Scott, Robert~N. Simpson, John~A. Evans, Scott Lipton, St\'ephane
  P.~A. Bordas, Thomas J.~R. Hughes, and Thomas~W. Sederberg.
\newblock Isogeometric boundary element analysis using unstructured
  {T}-splines.
\newblock {\em Comput. Methods Appl. Mech. Engrg.}, 254:197--221, 2013.

\bibitem[ST16]{sangalli16}
Giancarlo Sangalli and Mattia Tani.
\newblock Isogeometric preconditioners based on fast solvers for the
  {S}ylvester equation.
\newblock {\em SIAM J. Sci. Comput.}, 38(6):A3644--A3671, 2016.

\bibitem[Ste08]{s}
Olaf Steinbach.
\newblock {\em Numerical approximation methods for elliptic boundary value
  problems}.
\newblock Springer, New York, 2008.

\bibitem[SvV18]{stevenson18}
Rob Stevenson and Raymond van Veneti{\"e}.
\newblock Optimal preconditioning for problems of negative order.
\newblock {\em arXiv preprint}, 1803.05226, 2018.

\bibitem[SW98]{sw98}
Olaf Steinbach and Wolfgang~L. Wendland.
\newblock The construction of some efficient preconditioners in the boundary
  element method.
\newblock {\em Adv. Comput. Math.}, 9(1-2):191--216, 1998.
\newblock Numerical treatment of boundary integral equations.

\bibitem[Tak17]{takacs17}
Stefan Takacs.
\newblock Robust approximation error estimates and multigrid solvers for
  isogeometric multi-patch discretizations.
\newblock {\em Math. Models Methods Appl. Sci.}, 2017.

\bibitem[TM12]{TM}
Toru Takahashi and Toshiro Matsumoto.
\newblock An application of fast multipole method to isogeometric boundary
  element method for {L}aplace equation in two dimensions.
\newblock {\em Eng. Anal. Bound. Elem.}, 36(12):1766--1775, 2012.

\bibitem[TS96]{transtep96}
Thanh Tran and Ernst~P. Stephan.
\newblock Additive {S}chwarz methods for the {$h$}-version boundary element
  method.
\newblock {\em Appl. Anal.}, 60(1-2):63--84, 1996.

\bibitem[TSM97]{tsm97}
Thanh Tran, Ernst~P. Stephan, and Patrick Mund.
\newblock Hierarchical basis preconditioners for first kind integral equations.
\newblock {\em Appl. Anal.}, 65(3-4):353--372, 1997.

\bibitem[TW05]{ToselliWidlund}
Andrea Toselli and Olof Widlund.
\newblock {\em Domain decomposition methods---algorithms and theory}.
\newblock Springer Series in Computational Mathematics. Springer, Berlin, 2005.

\bibitem[Wid89]{wid89}
Olof~B. Widlund.
\newblock Optimal iterative refinement methods.
\newblock SIAM, Philadelphia, 1989.

\bibitem[Yse86]{yserentant}
Harry Yserentant.
\newblock On the multi-level splitting of finite element spaces.
\newblock {\em Numer. Math.}, 49(4):379--412, 1986.

\bibitem[Zha92]{zhang92}
Xuejun Zhang.
\newblock Multilevel {S}chwarz methods.
\newblock {\em Numer. Math.}, 63(4):521--539, 1992.

\end{thebibliography}

\end{document}